\theoremstyle{plain}
\newtheorem{theorem}{Theorem}[section]
\newtheorem{lemma}[theorem]{Lemma}
\newtheorem{cor}[theorem]{Corollary}
\newtheorem{definition}[theorem]{Definition}
\newtheorem{example}[theorem]{Example}
\newtheorem*{theorem*}{Theorem}
\theoremstyle{remark} 
\newcommand{\bb}{\mathbb}
\newcommand{\mrm}{\mathrm}
\newcommand{\mf}{\mathfrak}
\newcommand{\ov}{\overline}
\newcommand{\Sym}{\mathrm{Sym}}
\newcommand{\I}{\mathcal{S}}
\newcommand{\Ome}{\Omega}
\newcommand{\ch}{\mathbf{v}}
\def\s{\sigma}
\def\t{\tau}
\def\Irr{{\rm Irr}}
\begin{document}
\title{On the EKR-module Property}
\thanks{This work was partially supported  by NSFC grant 11931005 to the first author. The second author is a  PIMS CRG Post Doctoral Fellow.}
\date{}
\author{Cai Heng Li}
\address{SUSTech International Center for Mathematics\\
Department of Mathematics\\
Southern University of Science and Technology\\
Shenzhen, Guangdong 518055\\
P. R. China}
\email{lich@sustech.edu.cn}

\author{Venkata Raghu Tej Pantangi}
\address{Department of Mathematics\\
University of Lethbridge\\
Lethbridge, Alberta T1K3M4\\
Canada}
\email{pvrt1990@gmail.com}

\begin{abstract}
In the recent years, the generalization of the Erd\H{o}s-Ko-Rado (EKR) theorem to permutation groups has been of much interest. A transitive group is said to satisfy the \emph{EKR-module property} if the characteristic vector of every maximum intersecting set is a linear combination of the characteristic vectors of cosets of stabilizers of points. This generalization of the well-know permutation group version of the Erd\H{o}s-Ko-Rado (EKR) theorem, was introduced by K. Meagher in \cite{PSUEKR}. In this article, we present several infinite families of permutation groups satisfying the EKR-module property, which shows that permutation groups satisfying this property are quite diverse.
\end{abstract}

\maketitle

\section{Introduction}
The Erd\H{o}s-Ko-Rado (EKR) theorem \cite{EKR1961} is a classical result in extremal set theory. 
This celebrated result considers collections of pairwise intersecting $k$-subsets of an $n$-set. 
The result states that if $n\geq 2k$, for any collection $\mathfrak{S}$ of pairwise intersecting $k$-subsets, the cardinality $|\mathfrak{S}| \leq {n-1 \choose k-1}$. 
Moreover in the case $n>2k$, if $|\mathfrak{S}|={n-1 \choose k-1}$, then $\mathfrak{S}$ is a collection of $k$-subsets containing a common point. 
(When $n=2k$, the collection of $k$-subsets that avoid a fixed point, is also a collection of ${n-1 \choose k-1}$ pairwise intersecting subsets.) 
From a graph-theoretic point of view, this is the characterization of maximum independent sets in  Kneser graphs.

There are many interesting generalizations of this result to other classes of objects with respect to certain  form of intersection. 
One such generalization given by Frankl and Wilson \cite{FW1986} considers collections of pairwise non-trivially intersecting $k$-subspaces of a finite $n$-dimensional vector space, which corresponds to independent sets in $q$-Kneser graphs.
The book \cite{GMbook} is an excellent survey, including many generalizations of the EKR theorem.

In this article, we are concerned with EKR-type results for permutation groups. 
The first result of this kind was obtained by Deza and Frankl \cite{DF1977}, who investigated families of pairwise \emph{intersecting} permutations. 
Two permutations $\sigma, \tau \in S_{n}$ are said to {\it intersect} if the permutation $\sigma\tau^{-1}$ fixes a point. 
A set of permutations is called an {\it intersecting set} if $\s\t^{-1}$ fixes a point for any two members $\s$ and $\t$ of the set.
Clearly the stabilizer in $S_{n}$ of a point or its coset is a canonically occurring family of pairwise intersecting permutations, of size $(n-1)!$. In \cite{DF1977}, it was shown that if $\I$ is a family of pairwise intersecting permutations, then $|\I| \leq (n-1)!$. In the same paper, it was conjectured that if the equality $|\I|=(n-1)!$ is met, then $\I$ has to be a coset of a  point stabilizer. This conjecture was proved by Cameron and Ku (see \cite{CK2003}). An independent proof was given by Larose and Malvenuto (see \cite{LM2004}). Later, Godsil and Meagher (see \cite{GM2009}) gave a different proof. 
A natural next step is to ask similar questions about general transitive permutation groups.

Let $G$ be a finite group acting transitively on a set $\Omega$. 
An \emph{intersecting} subset of $G$ with respect to this action is a subset $\I\subset G$ in which any two elements intersect. 
Obviously, a point stabilizer $G_\alpha$, its left cosets $gG_\alpha$, and right cosets $G_\alpha g$ are intersecting sets, which we call \emph{canonical} intersecting sets.
An intersecting set of maximum possible size is called a \emph{maximum intersecting set}. 
Noting that the size of a canonical intersecting set is $|G_\alpha|=|G|/|\Omega|$, we see that the size of a maximum intersecting set is at least $|G|/ |\Omega|$. 
It is now natural to ask the following:

(A) Is the size of every intersecting set in $G$ bounded above by the size of a point stabilizer?

(B) Is every maximum intersecting set canonical?

As mentioned above, the results of Deza-Frankl, Cameron-Ku, and Larose-Malvenuto show that the answer to both these questions in positive for the natural action of a symmetric group. 
However, not all permutation groups satisfy similar properties, although there are many interesting examples that do. We now formally define the conditions mentioned in the above questions.

\begin{definition}
{\rm
A transitive group $G$ on $\Omega$ is said to satisfy the {\it EKR} property if every intersecting set has size at most $|G|/|\Omega|$, and further said to satisfy the {\it strict-EKR} property if every maximum intersecting set is canonical.
}
\end{definition}

When the action is apparent, these properties will be attributed to the group.
We have already seen that the natural action of $S_{n}$ satisfies both the EKR and the strict-EKR property. EKR properties of many specific permutation groups have been investigated (see \cite{AM2014,AM2015,LPSX2018, MSi2019, MS2011, MST2016}). 
In particular, it was shown that all $2$-transitive group actions satisfy the EKR property, see \cite[Theorem 1.1]{MSi2019}, but not every 2-transitive group satisfies the strict-EKR property; for instance, with respect to the $2$-transitive action of $\mrm{PGL}(n,q)$ (with $n\geq 3$) on the 1-spaces, the stabilizer of a hyperplane is also a maximum intersecting set.
%
%
However, it is shown in \cite{MSi2019} that $2$-transitive groups satisfy another interesting property called the EKR-module property, defined below.

For a transitive group $G$ on $\Omega$ and a subset $S\subset G$, let 
\[\ch_{S}=\sum\limits_{s \in S} s \in \bb{C}G,\]
%
the characteristic vector of $S$ in the group algebra $\bb{C}G$. 
For $\alpha, \beta \in \Omega$ and $g\in G$ with $g\cdot \alpha=\beta$, we write
\[\ch_{\alpha,\beta}=\sum_{\substack{ t \in G\\ t\cdot\alpha= \beta}} t=\sum\limits_{x\in G_\alpha}{gx}=g\sum\limits_{x\in G_\alpha}x=\ch_{gG_\alpha},\]
the characteristic vector of the canonical intersecting set $gG_\alpha$, which we call a {\it canonical vector} for convenience.
The next definition was first introduced in \cite{PSUEKR}. 

\begin{definition}
{\rm
A finite transitive group $G$ on a set $\Omega$ is said to satisfy the {\it EKR-module property} if the characteristic vector of each maximum intersecting set of $G$ on $\Ome$ is a linear combination of canonical vectors, that is, the vectors in $\{\ch_{\alpha,\beta}\mid \alpha,\beta \in \Ome\}$.
%
%
}
\end{definition}

The name is  from the so called ``module method'' described in \cite{ahmadi2015erdHos}. 
We remark that 
\begin{enumerate}[(a)]
\item a group action satisfying the strict-EKR property also satisfies the EKR-module property, but the converse statement is not true;
    
\item and there exist group actions that satisfy the EKR-module property but not the EKR property, see \cite[Theorem 5.2]{meagher2021triangles}, and the following Example~\ref{ex:A_4};

\end{enumerate}


\begin{example}\label{ex:A_4}
{\rm 
Consider the action of $A_{4}$ on the set $\Omega$ of cosets of a subgroup $H \cong \bb{Z}_{2}$. We observe that the Sylow 2-subgroup $N$ of $A_{4}$ is an intersecting set. As $4=|N|>|A_{4}/|\Omega|=2$, this action does not satisfy the EKR property.
Consider an intersecting set $\I$. Then given $t\in \I$, the set $\I t^{-1}$ is an intersecting set containing the identity. By the definition of an intersecting set, we have $\I t^{-1} \subset \bigcup\limits_{g \in A_{4}}gHg^{-1}=N$. This shows that any maximum intersecting set must be coset of $N$. Any coset of $N$ is a union of two disjoint cosets of $H$, and thus this action satisfies the EKR-module property.
}
\end{example}

We will now construct an example of a group action that satisfies the EKR property, but not the EKR-module property. Prior to doing so, we mention a well-known result. Consider a transitive action of a group $G$ on a set $\Ome$. A subset $R\subset G$ is said to be a \emph{regular subset}, if for any $(\alpha,\ \beta) \in \Ome^2$, there is a unique $r \in R$ such that $r\cdot \alpha =\beta$. Corollary 2.2 of \cite{AM2015} states that permutation groups which contain a regular subset, satisfy the EKR property.

For a group $G$ and a subgroup $H\leqslant G$, let 
\[[G:H]=\{xH\mid x\in G\},\]
the set of left cosets of $H$ in $G$.
Then $G$ acts transitively on $[G:H]$ by left multiplication.
Moreover, each transitive action of a group is equivalent to such a coset action.

\begin{example}\label{cex}
{\rm
Consider $G=S_{5}$ (isomorphic to $\mrm{PGL}(2,5)$) and a subgroup $H\leq G$ isomorphic to the dihedral group of size 12. We consider the action of $G$ on $\Omega=[G :H]$. We first show that this action satisfies the EKR property, by demonstrating the existence of a regular subset. We consider the cyclic subgroup $C:=\left\langle(1,2,3,4,5)\right\rangle$ and the $4$-cycle $t :=(2,3,5,4)$. We claim that $R := C \cup tC$ is a regular set. As $|R|=|\Omega|$, this claim will follow by showing that for $r,s \in R$ with $r\neq s$,  we have $rgH \neq sgH$, for all $g \in G$. This is equivalent to showing that $g^{-1}r^{-1}sg \notin H$, for all $g \in G$. It is easy to verify that for any two distinct $r,s\in R$ , $r^{-1}s$ is either a $4$-cycle or a $5$-cycle, and thus $g^{-1}r^{-1}sg \notin H$. We can now conclude that $R$ is a regular subset. Therefore, by \cite[Corollary 2.2]{AM2015},  $G$ satisfies the EKR property. Thus the size of any intersecting set is bounded above by $|H|=12$. 
 
 Now we consider subgroup $K \cong A_{4}$ of $G$. It is easy to check that $KK^{-1}=K \subset \cup gHg^{-1}$ and thus $K$ is a maximum intersecting set. In this case, canonical intersecting sets are cosets of a conjugate of $H$. Every canonical intersecting set contains exactly $3$ even permutations. Also every permutation in $K$ is even. Now consider the sign character $\lambda$. For every canonical intersecting set $\I$, we have $\lambda(\ch_{S})=0$. On the other hand, we have $\lambda(\ch_{K})\neq 0$. From this, we see that $\ch_{K}$ cannot be a linear combination of the characteristic vectors of the canonical intersecting sets. Therefore this action does not satisfy the EKR-module property.
 }
\end{example}

We will now describe the main results of our paper.

\subsection{Main Results}
Our first result is a characterization of the EKR-module property of a group action, in terms of the characters of the group in question. Given a group $G$, a complex character $\chi$ of $G$, and a subset $A \subseteq G$, by $\chi(\ch_{A})$, we denote the sum $\sum\limits_{a\in A}\chi(a)$. We now describe our first result, a characterization of the EKR-module property in terms of character sums.

\begin{theorem}\label{idekrm}
Let $G$ be a finite group, $H<G$, and $\Omega=[G:H]$. 
Let $C=\{\chi \in \Irr(G):\ \chi(\ch_{H})=0\}$, and $\mf{S}$ be the collection of maximum intersecting sets in $G$.
Then $G$ on $\Ome$ satisfies the EKR-module property if and only if $\chi(\ch_{\I}) =0$ for any $\I \in \mf{S}$ and any $\chi \in C$.
\end{theorem}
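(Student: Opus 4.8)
The plan is to reformulate the EKR-module property as a statement about a linear subspace of the group algebra $\bb{C}G$, and then translate that subspace condition into character sums using the fact that $\bb{C}G$ decomposes into isotypic components indexed by $\Irr(G)$.

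First I would identify the relevant subspaces. Let $U$ be the subspace of $\bb{C}G$ spanned by all canonical vectors $\{\ch_{\alpha,\beta}\mid \alpha,\beta\in\Ome\}$. The EKR-module property says precisely that $\ch_{\I}\in U$ for every $\I\in\mf{S}$. So the whole theorem reduces to computing $U$ and giving a clean membership test. My first task is therefore to understand $U$. The key observation is that each $\ch_{\alpha,\beta}$ is of the form $g\ch_H$ where $\ch_H=\sum_{x\in H}x$ and $g$ runs over a transversal with $g\cdot\alpha=\beta$; more uniformly, as $\alpha,\beta$ range over $\Ome$ and we fix $\alpha=H$ (the coset $H$ itself), the vectors $\ch_{H,\beta}=g\ch_H$ span the left ideal $\bb{C}G\cdot\ch_H$. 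Allowing the base point to vary only enlarges things within that ideal, so I expect $U=\bb{C}G\,\ch_H$, the left ideal generated by $\ch_H$. This identification is the conceptual heart of the argument and I would verify it carefully, since the definition quantifies over both $\alpha$ and $\beta$.

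Next I would decompose $\bb{C}G$ as $\bigoplus_{\chi\in\Irr(G)} M_\chi$, where $M_\chi$ is the $\chi$-isotypic (two-sided-ideal) component, and examine how the left ideal $U=\bb{C}G\,\ch_H$ meets each $M_\chi$. The element $\ch_H$ is, up to the scalar $|H|$, the central idempotent-like projector onto $H$-fixed vectors; its image in $M_\chi$ is governed by the multiplicity of the trivial $H$-representation in $\chi|_H$, equivalently by the quantity $\chi(\ch_H)=\sum_{x\in H}\chi(x)$. The clean dichotomy I expect is: if $\chi(\ch_H)=0$ then $\ch_H$ has zero component in $M_\chi$, so $U\cap M_\chi=0$; whereas if $\chi(\ch_H)\neq 0$ then $U$ contains a full copy of the corresponding simple left $G$-modules inside $M_\chi$. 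In other words, $U=\bigoplus_{\chi\notin C} M_\chi\cdot\ch_H$ lives entirely in the isotypic components indexed by $\Irr(G)\setminus C$, and $U$ is exactly the set of $v\in\bb{C}G$ whose components in every $M_\chi$ with $\chi\in C$ vanish. Using that the linear functional $v\mapsto\chi(v)$ (extended to $\bb{C}G$ by linearity) detects precisely the $M_\chi$-component, membership $v\in U$ becomes equivalent to $\chi(v)=0$ for all $\chi\in C$.

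Finally I would combine these two steps: applying the membership criterion to $v=\ch_{\I}$ for $\I\in\mf{S}$ gives that $\ch_{\I}\in U$ if and only if $\chi(\ch_{\I})=0$ for all $\chi\in C$, which is exactly the claimed equivalence. \textbf{The main obstacle} I anticipate is the second step — pinning down $U\cap M_\chi$ precisely. The subtlety is that $U$ is only a \emph{left} ideal, not two-sided, so it need not be a sum of whole isotypic components; I must argue that although $U\cap M_\chi$ is a proper left submodule of $M_\chi$ when $\chi\in C$ (namely zero) and a nonzero one otherwise, the \emph{vanishing-of-$\chi(v)$} characterization still holds. The cleanest route is to show that for $\chi\in C$ the functional $\chi$ annihilates all of $U$ (immediate from $\chi(g\ch_H)=\chi(\ch_H)\cdot\text{(root-of-unity factors)}=0$, using $\chi(gx)$ summed over $x\in H$), and conversely that the common kernel of $\{\chi:\chi\in C\}$ is contained in $U$, for which I would use a dimension count: $\dim U=\sum_{\chi\notin C}(\dim\chi)\cdot m_\chi$ where $m_\chi$ is the multiplicity with which the simple module appears, matching the codimension forced by the $|C|$-many independent constraints. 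Getting this dimension bookkeeping exactly right — accounting for multiplicities of the trivial $H$-module in each $\chi|_H$ — is where the real care is needed.
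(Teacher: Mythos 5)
There are two genuine gaps here, and both stem from the same misidentification. First, the span $U$ of the canonical vectors is \emph{not} the left ideal $\bb{C}G\,\ch_{H}$. Varying the base point $\alpha=aH$ replaces $H$ by its conjugates $H^{a}$, so $U$ is spanned by the vectors $ga\ch_{H}a^{-1}$ and is the \emph{two-sided} ideal generated by $\ch_{H}$ (you flag "$U$ is only a left ideal" as the subtlety, but the situation is the opposite). This is not a cosmetic point: for a $2$-transitive group the left ideal $\bb{C}G\,\ch_{H}$ is the $|\Omega|$-dimensional permutation module, while the actual span is $\langle e_{1}\rangle\oplus\langle e_{\pi}\rangle$ of dimension $1+(|\Omega|-1)^{2}$ (Corollary~\ref{2t}); the correct identification $U=\bigoplus_{\chi\notin C}\langle e_{\chi}\rangle$ is the content of Lemma~\ref{caniddes}. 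Second, and more seriously, your membership test "$v\in U$ iff $\chi(v)=0$ for all $\chi\in C$" is false: the functional $\chi$ restricted to the isotypic component $M_{\chi}\cong\mathrm{Mat}_{\chi(1)}(\bb{C})$ is the trace, which has a large kernel whenever $\chi(1)>1$, so $\chi(v)=0$ does not force the $M_{\chi}$-component of $v$ to vanish. Your own proposed dimension count would expose this: the common kernel of the functionals in $C$ has codimension $|C|$, whereas $U$ has codimension $\sum_{\chi\in C}\chi(1)^{2}$, and these agree only when every $\chi\in C$ is linear. So the "if" direction of the theorem does not follow from the single hypothesis $\chi(\ch_{\I})=0$ applied to one set $\I$.

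The theorem is nevertheless true, and the paper's rescue is the step your outline is missing: the family $\mf{S}$ of maximum intersecting sets is closed under right translation, so the hypothesis actually yields $\chi(\ch_{\I g^{-1}})=0$ for \emph{every} $g\in G$. Since $\frac{|G|}{\chi(1)}e_{\chi}\ch_{\I}=\sum_{g\in G}\chi(\ch_{\I g^{-1}})\,g$, this gives the genuinely stronger conclusion $e_{\chi}\ch_{\I}=0$ for all $\chi\in C$, whence $\ch_{\I}=\sum_{\psi\notin C}e_{\psi}\ch_{\I}\in\bigoplus_{\psi\notin C}\langle e_{\psi}\rangle=U$. (The forward direction is fine once $U$ is correctly identified, since $\chi(\ch_{H})=0$ forces the image of $\ch_{H}$ under a representation affording $\chi$ to be $|H|$ times a trace-zero projection, hence zero, so $\chi$ annihilates the whole two-sided ideal.) Your argument as written needs both the corrected description of $U$ and the translation-closure device to go through.
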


We note that Example~\ref{cex} can be viewed as an application of the above result. In Example~\ref{cex}, the sign character $\lambda$ is a character such that $\lambda(\ch_{H})=0$. However, for the maximum intersecting set $K$, we have $\lambda(\ch_{K})\neq 0$. Thus by Theorem~\ref{idekrm}, the action in Example~\ref{cex} does not satisfy the EKR-module property. 

Given an action of $G$ on $\Omega$,  the derangement graph $\Gamma_{G,\ \Omega}$ is the graph whose vertex set is $G$, and vertices $g,h\in G$ are adjacent if and only if $gh^{-1}$ does not fix any point in $\Omega$. 
Then a set $S\subseteq G$ is an intersecting set if and only if it is an independent set in $\Gamma_{G,\ \Omega}$.
Therefore, the study of intersecting sets could benefit from the various results from spectral graph theory about independent sets. 
Many authors (for instance, see \cite{MS2011}, \cite{MST2016}) have studied the EKR and strict-EKR properties of various group actions from this point of view. 
Theorem~\ref{Specsuff} is a characterization of the EKR-module property of a group action in terms of spectra of weighted adjacency matrices of the corresponding derangement graph. 

It is well known (see \cite[Corollary 2.2]{AM2015}) that permutation groups with regular subsets satisfy the EKR property. However, as observed in example~\ref{cex}, such groups do not necessarily satisfy the EKR-module property. The following theorem shows that every permutation group with a regular normal subgroup satisfies the EKR-module property.

\begin{theorem}\label{rega}
Transitive groups actions with a regular normal subgroup satisfy the EKR-module property.
\end{theorem}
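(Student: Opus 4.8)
The plan is to use the character-theoretic characterization from Theorem~\ref{idekrm}. Let $G$ act transitively on $\Omega$ with a regular normal subgroup $N \trianglelefteq G$. Fix a point $\alpha \in \Omega$ and write $H = G_\alpha$ for its stabilizer, so that $G = NH$ with $N \cap H = 1$ (this is the standard structure of a group with a regular normal subgroup: $N$ acts simply transitively, and $\Omega$ may be identified with $N$ via $n \mapsto n\cdot\alpha$, with $H$ acting by conjugation). By Theorem~\ref{idekrm}, it suffices to show that for every maximum intersecting set $\I$ and every $\chi \in \Irr(G)$ with $\chi(\ch_H) = 0$, we have $\chi(\ch_\I) = 0$.

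The key structural step is to understand the derangements. An element $g = nh$ (with $n\in N$, $h\in H$) fixes a point if and only if it is conjugate into $H$; I would show that the set $\mathcal{D}$ of derangements, together with the identity-component structure, forces every maximum intersecting set to be a coset of $H$ or, more precisely, to be controlled by $N$. Concretely, since $N$ is regular, each coset $nH$ meets every ``point-fiber'' exactly once, and the elements fixing $\alpha$ are exactly those in $H$. The crucial observation is that because $N$ is normal and regular, the only elements that are \emph{not} derangements are those lying in $\bigcup_{g\in G} gHg^{-1}$, and an intersecting set $\I$ containing the identity satisfies $\I \subseteq \bigcup_g gHg^{-1}$; normality of $N$ then gives that each such element has trivial $N$-part structure controlled by conjugacy in $H$. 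This mirrors the mechanism in Example~\ref{ex:A_4}, where the regular normal Sylow $2$-subgroup forced maximum intersecting sets to be cosets of $N$.

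The heart of the argument is therefore a character computation. For $\chi \in \Irr(G)$ with $\chi(\ch_H) = 0$, I would analyze $\chi(\ch_\I)$ by decomposing the restriction of $\chi$ to $N$. Since $N$ is abelian in many standard cases (and regular), Clifford theory describes $\chi|_N$ as a sum over an $H$-orbit of linear characters of $N$. The condition $\chi(\ch_H) = 0$ means $\chi$ contains no trivial constituent upon restriction to $H$, i.e. the fixed space of $H$ in the module affording $\chi$ is zero. The plan is to exploit that a maximum intersecting set $\I$, being (a translate of) a union determined by the regular action of $N$, has characteristic vector whose $\chi$-projection is governed by the $H$-invariants of $\chi$; when these invariants vanish, so does the sum. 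I would make this precise by writing $\ch_\I$ in terms of the $N$-translates and using that summing a nontrivial linear character of $N$ over a full $N$-coset structure yields zero.

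The main obstacle I anticipate is proving rigorously that every maximum intersecting set is genuinely a coset-like object with respect to $N$, rather than merely bounded in size. The containment $\I\I^{-1} \subseteq \bigcup_g gHg^{-1}$ gives an upper structure, but upgrading this to a precise description of $\ch_\I$ compatible with the $N$-action requires care — in particular, showing that the support of $\ch_\I$ is a union of whole cosets of some subgroup related to $H\cap N = 1$, or that it respects the fibration of $G$ over $\Omega$ induced by $N$. If $N$ is nonabelian the Clifford-theoretic bookkeeping becomes heavier, so I would first settle the abelian (e.g.\ elementary abelian) case cleanly and then argue that the character-sum vanishing $\chi(\ch_\I)=0$ follows from the $H$-orbit structure of the constituents of $\chi|_N$ in general, using that $\ch_\I$ is fixed (up to the intersecting constraint) under the relevant $N$-translation averaging.
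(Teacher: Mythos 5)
There is a genuine gap: your plan hinges on showing that a maximum intersecting set $\I$ is ``a coset-like object with respect to $N$'' (a union of whole cosets, or something respecting the $N$-fibration), and on a Clifford-theoretic analysis of $\chi|_N$ that you only sketch in the abelian case. Neither step can be carried out as stated. A maximum intersecting set here has size $|H|=|G|/|N|$, whereas an $N$-coset has size $|N|=|\Omega|$, so $\I$ is essentially never a union of $N$-cosets, and ``summing a nontrivial linear character of $N$ over a full $N$-coset structure'' never enters; moreover $N$ need not be abelian (the theorem is meant to cover, e.g., primitive groups of type TW), so restricting to linear characters of $N$ loses generality. The analogy with Example~\ref{ex:A_4} is misleading: there the point stabilizer is much smaller than $N$ and the EKR property fails, which is exactly the opposite of the regular-normal-subgroup situation. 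You correctly identify this structural upgrade as the main obstacle, but it is not merely hard --- it is the wrong target, since maximum intersecting sets need not have any coset structure at all (the strict-EKR property can fail for these groups).

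The idea you are missing is much more elementary. Write $G=A\rtimes H$ with $A$ the regular normal subgroup, and take a maximum intersecting set $\I_{0}$ containing $1_{G}$ (Corollary~\ref{idekr} lets you reduce to this case). Every $s\in\I_{0}$ fixes a point, and because $A$ is regular and normal, each such $s=(a,\sigma)$ is conjugate \emph{by an element of $A$} to a unique $\sigma_{s}\in H$ (Lemma~\ref{semidirect}). The map $s\mapsto\sigma_{s}$ is injective: if $\sigma_{s}=\sigma_{r}$ then $sr^{-1}\in A$ fixes a point, hence $sr^{-1}=1$ by regularity. Since the existence of a regular subgroup already gives the EKR property, $|\I_{0}|=|H|$, so $s\mapsto\sigma_{s}$ is a bijection onto $H$. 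Characters are class functions, so $\chi(\ch_{\I_{0}})=\sum_{s\in\I_{0}}\chi(\sigma_{s})=\chi(\ch_{H})$ for \emph{every} $\chi\in\Irr(G)$, and in particular $\chi(\ch_{\I_{0}})=0$ whenever $\chi(\ch_{H})=0$, which is exactly what Corollary~\ref{idekr} requires. No structural description of $\I_{0}$ beyond this pointwise conjugacy, and no decomposition of $\chi|_N$, is needed.
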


A few classes of permutation groups with a regular normal subgroup are Frobenius groups, affine groups, primitive groups of type HS, HC, and TW (for a description of these, we refer the reader to \cite{praeger1996finite}).

After showing that all $2$-transitive groups satisfy the EKR-module property, the authors of \cite{MSi2019} mention that the next natural step is to consider rank $3$ permutation groups. As a first step, we consider this problem for the class of primitive rank $3$ permutation groups. The next theorem reduces the problem to almost simple groups.
(Recall that a finite group is called {\it almost simple} if it has a unique minimal normal subgroup, which is non-abelian and simple.)

\begin{theorem}\label{Rank3}
Let $G$ be a primitive permutation group on $\Ome$ of rank $3$.
Then either $G$ has the EKR-module property, or $G$ is an almost simple group.
%
\end{theorem}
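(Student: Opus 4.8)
The plan is to invoke the classification of primitive rank $3$ groups, which partitions such groups into a small number of well-understood types according to the structure of the socle, and then dispatch every type except the almost simple one using the tools already assembled in the paper. Recall that by the work of Bannai, Kantor--Liebler, Liebeck, and others, a primitive rank $3$ permutation group $G$ falls into one of three broad classes: (i) the socle is abelian, so $G$ is of \emph{affine type}, i.e.\ $G \leqslant \mathrm{AGL}(d,p)$ with an elementary abelian regular normal subgroup; (ii) the socle is a product $T \times T$ of two copies of a nonabelian simple group $T$ acting in product action or as a group of type HS/HC (the \emph{grid} or \emph{diagonal} cases); or (iii) $G$ is \emph{almost simple}. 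My strategy is to show that in cases (i) and (ii) the group already satisfies the EKR-module property, leaving (iii) as the stated alternative.

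First I would handle the affine case. Here $G$ has an elementary abelian regular normal subgroup (the translations), so Theorem~\ref{rega} applies directly and $G$ satisfies the EKR-module property. This disposes of class (i) immediately with no further work. The affine rank $3$ groups are by far the most numerous family in the classification, so it is worth emphasizing that they require only a one-line appeal to the regular-normal-subgroup result.

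Next I would treat the groups whose socle is $T \times T$. The key observation is that the groups of type HS and HC possess a regular normal subgroup: these are exactly the types for which Theorem~\ref{rega} was advertised to apply (the paper lists HS and HC among the classes covered). For these I again invoke Theorem~\ref{rega}. For the remaining product-action (grid-type) rank $3$ groups, where the action is on $\Delta \times \Delta$ with $G$ preserving a Hamming-type scheme, I would argue that the EKR-module property is inherited from the factors: the maximum intersecting sets of a product action are governed by the components, and one can express their characteristic vectors in terms of canonical vectors by using the tensor/product structure of the irreducible characters together with the character-sum criterion of Theorem~\ref{idekrm}. Concretely, I would show that every irreducible $\chi$ with $\chi(\ch_H) = 0$ decomposes according to the product structure, and that the corresponding character sum over any maximum intersecting set vanishes.

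The main obstacle I anticipate is precisely the grid (product-action) case: unlike the affine and HS/HC cases, these groups need not contain a regular normal subgroup, so Theorem~\ref{rega} does not apply and one must fall back on the character-theoretic criterion of Theorem~\ref{idekrm}. The difficulty is twofold: first, one must have an explicit enough description of the maximum intersecting sets in a rank $3$ product action to evaluate the relevant character sums; second, one must verify that the set $C$ of irreducibles vanishing on $\ch_H$ behaves compatibly with the product decomposition. I expect the cleanest route is to show that a rank $3$ product action reduces to an intersecting-set problem on the underlying rank $2$ (i.e.\ $2$-transitive) factor, and then quote that $2$-transitive groups satisfy the EKR-module property, as established in \cite{MSi2019}. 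Once every non-almost-simple type is cleared, the theorem follows by the dichotomy in the classification.
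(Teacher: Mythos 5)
Your high-level strategy coincides with the paper's: the same trichotomy (affine / product action / almost simple), the same one-line disposal of the affine case via Theorem~\ref{rega}, and the same intention of reducing the product-action case to the underlying $2$-transitive factor through the character-sum criterion. However, the product-action case is the entire substance of the proof, and you have only described a plan for it, not carried it out; two essential ingredients are missing. First, you need a structural characterization of the maximum intersecting sets of $G=T\wr S_2$ on $\Omega^2$, and it is not enough to say they are ``governed by the components'': a maximum intersecting set $\I_0$ containing the identity splits as $(\I_0\cap N)\cup(\I_0\cap N\pi)$ with $N=T\times T$, and while a counting argument shows both pieces are maximum intersecting sets of $N$ (hence of the form $W\times Z$ and $(X\times Y)\pi$ with $W,Z,X,Y$ maximum intersecting in $T$, by the product lemma for $T\times T$), one must additionally prove $Y=X^{-1}$. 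This uses the permutation character identity $\Lambda((x,y)\pi)=1+\psi(xy)$ and the maximality of $X$ in $T$, and it is precisely this twisted-diagonal shape of the $\pi$-part that makes the subsequent character sums computable. Your sketch does not anticipate this step.

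Second, ``quote that $2$-transitive groups satisfy the EKR-module property'' is not sufficient for the character computation. After describing $\Irr(T\wr S_2)$ via Clifford theory (characters $\rho_\chi$, $\beta\rho_\chi$, $\sigma_{\chi,\lambda}$), the problematic character is $\beta\rho_\psi$, where $1+\psi$ is the permutation character of $T$: it restricts to $N$ exactly as $\rho_\psi$ does and is \emph{not} a constituent of the permutation character of $G$, so by Theorem~\ref{idekrm} one must prove $\beta\rho_\psi(\ch_{\I_0})=0$. The character sum works out to $\psi(\ch_W)\psi(\ch_Z)-\sum_{r\in Z}\psi(\ch_{r^{-1}Z})$, and its vanishing requires the \emph{exact numerical values} $\psi(\ch_S)=|H|$ when $1\in S$ and $\psi(\ch_S)=-|H|/\psi(1)$ when $1\notin S$ for a maximum intersecting set $S$ of $T$ (the paper's Lemma~\ref{2tra}, extracted from the idempotent projection $e_1+e_\psi$), not merely the qualitative vanishing statements supplied by the EKR-module property of $T$. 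Until you supply both the structure lemma and these explicit evaluations, the product-action case remains open and the proof is incomplete.
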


%
%

We would like to mention that when $n$ is sufficiently large, the rank $3$ action of $S_{n}$ on $2$-subsets of $[n]$, satisfies the strict-EKR property, and thereby the EKR-module property. This was proved in \cite{ellis2012setwise}. Example~\ref{cex} shows that this fails when $n=5$. 

In \cite{BMMKK2015}, a finite group $G$ is defined to satisfy the {\it weak EKR property}, if \textbf{every transitive action} of $G$ satisfies the EKR property. A finite group $G$ is defined to satisfy the {\it strong EKR property}, if \textbf{every transitive action} of $G$ satisfies the strict-EKR property. Theorem~$1$ of \cite{BMMKK2015} shows that nilpotent group satisfies the weak EKR property.  
This result was extended to supersolvable groups in \cite{LSP}. 
It is easy to check that every abelian group satisfies the strong EKR property. 
Theorem~$3$ of \cite{BMMKK2015} states that a finite non-abelian nilpotent group satisfies the strong EKR property if and only if it is a direct product of a $2$-group and an abelian group of odd order. 
We now make the following analogous definition.

\begin{definition}\label{sekrm}
{\rm
A finite group $G$ is said to satisfy the {\it EKR-module property} if every transitive action of $G$ satisfies the EKR-module property.
}
\end{definition}

As mentioned before, groups with the strict-EKR property have the EKR-module property. 
It is then natural to ask whether the converse statement is true or not.
It is shown in \cite[Theorem\,3]{BMMKK2015} that there are infinitely many nilpotent groups of nilpotency class $2$ that do not satisfy the strict-EKR property.
The next result then answers the question in negative.

\begin{theorem}\label{Nil}
Nilpotent groups of nilpotency class $2$ satisfy the EKR-module property.
\end{theorem}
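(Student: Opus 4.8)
The plan is to prove the statement in two stages: first reduce an arbitrary class-$2$ nilpotent group to its Sylow subgroups, then settle the case of a $p$-group of class at most $2$ by induction on its order. Throughout I would work with the character criterion of Theorem~\ref{idekrm}, so that the goal always becomes showing $\chi(\ch_{\I})=0$ for every maximum intersecting set $\I$ and every $\chi$ in $C=\{\chi\in\Irr(G):\chi(\ch_H)=0\}$. Since $\ch_{g\I}=g\,\ch_{\I}$ and left translates of canonical vectors are again canonical, I may always assume $1\in\I$.

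For the reduction, write the nilpotent group as $G=\prod_i P_i$, the direct product of its Sylow subgroups; because the factors have coprime orders, $H=\prod_i(H\cap P_i)$, and the action on $[G:H]$ is the product of the actions of $P_i$ on $[P_i:H\cap P_i]$. The key step I would prove is that a \emph{maximum} intersecting set of a product action is a direct product of maximum intersecting sets of the factors: each projection $\pi_i(\I)$ is intersecting in $P_i$, so since nilpotent groups satisfy the EKR property \cite{BMMKK2015} one gets $|\I|\le\prod_i|\pi_i(\I)|\le\prod_i|H\cap P_i|=|H|$, and equality forces $\I=\prod_i\pi_i(\I)$ with each $\pi_i(\I)$ maximum. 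As $\Irr(G)=\{\bigotimes_i\chi_i\}$ and $\bigl(\bigotimes_i\chi_i\bigr)(\ch_H)=\prod_i\chi_i(\ch_{H\cap P_i})$, a character $\bigotimes_i\chi_i$ lies in $C$ exactly when some factor $\chi_j$ lies in the set $C_j$ for $P_j$; then $\bigl(\bigotimes_i\chi_i\bigr)(\ch_{\I})=\prod_i\chi_i(\ch_{\pi_i(\I)})=0$ by Theorem~\ref{idekrm} applied to $P_j$. This reduces everything to $p$-groups of class at most $2$, and the abelian ones are immediate.

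So let $G$ be a $p$-group of class $2$ acting on $[G:H]$, and induct on $|G|$. If the action is not faithful then $\mrm{core}_G(H)\ne1$ meets $Z(G)$ nontrivially, and I would pick a central $Z_0\le\mrm{core}_G(H)$ of order $p$. The action factors through $\ov G=G/Z_0$, so every maximum intersecting set is a union of $Z_0$-cosets, and for $\chi\in C$ the sum vanishes: if $Z_0\not\le\ker\chi$ then $\sum_{z\in Z_0}\chi(sz)=\chi(s)\sum_{z}\omega(z)=0$ by centrality, while if $Z_0\le\ker\chi$ then $\chi$ is inflated from some $\ov\chi\in\ov C$ and $\chi(\ch_{\I})=|Z_0|\,\ov\chi(\ch_{\ov{\I}})=0$ by the inductive hypothesis. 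Theorem~\ref{idekrm} then gives the result, leaving only faithful actions. In that case $Z(G)\cap H\le\mrm{core}_G(H)=1$, whence $G'\cap H=1$ and $M:=HG'=H\times G'$ internally (as $G'$ is central). Since every conjugate $xHx^{-1}$ lies in $M$, a maximum intersecting $\I\ni1$ lies in $M$ and projects injectively to $H$ (two elements $hz,hz'$ of $\I$ force $zz'^{-1}\in G'\cap H=1$); as $|\I|=|H|$ it is the graph $\I=\{h\,f(h):h\in H\}$ of a function $f\colon H\to G'$ with $f(1)=1$, and the intersecting condition (applied to the pair $hf(h),1$) gives $f(h)\in[h,G]$. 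Now fix $\chi\in C$ with central character $\omega$ on $G'$: using that an irreducible character of a class-$2$ group vanishes off $Z(\chi)=\{g:[g,G]\subseteq\ker\omega\}$, every $h$ with $\chi(h)\ne0$ satisfies $[h,G]\subseteq\ker\omega$, hence $\omega(f(h))=1$; therefore $\chi\bigl(h\,f(h)\bigr)=\omega(f(h))\chi(h)=\chi(h)$ term by term and $\chi(\ch_{\I})=\sum_{h\in H}\chi(h)=\chi(\ch_H)=0$. (For linear $\chi$ one has $\omega|_{G'}$ trivial, so $\omega(f(h))=1$ for all $h$ and the same identity holds.) Theorem~\ref{idekrm} completes the faithful case.

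The hard part will be the faithful case, and within it the two structural inputs that must be combined: forcing a maximum intersecting set to be the graph of a function valued in $G'$ with $f(h)\in[h,G]$, and then matching this with the vanishing of $\chi$ off $Z(\chi)$ so that $\omega\circ f$ is trivial exactly on the support of $\chi$, which collapses $\chi(\ch_{\I})$ to $\chi(\ch_H)$. The product-of-maximum-sets lemma is the other place requiring care, whereas the non-faithful reduction is routine bookkeeping by comparison.
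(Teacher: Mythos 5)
Your proof is correct, but it takes a genuinely different route from the paper's. The paper argues by minimal counterexample on $|N|+|\Omega|$: after reducing to a core-free $H$, it fixes a ``bad'' pair $(\chi,\I)$ with $\chi(\ch_H)=0$ but $\chi(\ch_{\I})\neq 0$ and splits on whether the \emph{character} $\chi$ is faithful. If it is, the commutator trick $\chi(y)=\chi(y)\chi(z)$ shows $\chi$ vanishes off $Z(N)$, so $\chi(\ch_H)=\chi(1)\neq0$, a contradiction; if not, it quotients by $Z_\chi=\ker(\chi)\cap Z$, which is \emph{not} in the core, so it must either invoke Theorem~\ref{rega} (when $Z_\chi$ is regular) or pass to the block system of $Z_\chi$-orbits and contradict minimality. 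You instead split on whether the \emph{action} is faithful: your non-faithful case quotients by a central subgroup inside $\mathrm{core}_G(H)$, i.e.\ inside the kernel, so it is clean and never needs Theorem~\ref{rega}; your faithful case is where the real work happens, and there you prove something the paper does not: every maximum intersecting set containing $1$ is the graph of a function $f\colon H\to G'$ with $f(h)\in[h,G]$, which you then play off against the fact that $\chi$ vanishes outside $Z(\chi)=\{g:[g,G]\subseteq\ker\omega\}$ to collapse $\chi(\ch_{\I})$ to $\chi(\ch_H)$. Two remarks. First, that vanishing fact (for $G/Z(\chi)$ abelian one has $\chi(1)^2=[G:Z(\chi)]$, hence $\chi=0$ off $Z(\chi)$; Isaacs, Theorem~2.31 with Corollary~2.30) is exactly the engine hidden inside the paper's faithful-character case, so do cite it or reprove it via the same commutator identity. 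Second, your Sylow reduction and the product lemma for maximum intersecting sets are correct (the product lemma is the analogue of Lemma~\ref{directproduct}) but strictly speaking unnecessary, since your $p$-group induction works verbatim for arbitrary class-$2$ nilpotent groups. What your approach buys is a structural description of the maximum intersecting sets themselves and independence from Theorem~\ref{rega}; what the paper's buys is brevity, at the cost of yielding no information about what the extremal sets look like.
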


It is shown in \cite[Theorem~2]{BMMKK2015} that a group $G$ satisfying the EKR property for every transitive action is necessarily solvable.
However, it is shown in Lemma~\ref{A5} that there do exist non-solvable groups which have the EKR-module property.

An analogue of the EKR-module property has been observed in other generalizations of the EKR theorem. Consider a graph $X$ and a prescribed set of ``canonically'' occurring cliques. We say that the graph satisfies the EKR-module property, if the characteristic vector of any maximum clique is a linear combination of the characteristic vectors of the canonical cliques. In the context of permutation groups satisfying the EKR-module property, the complement of the corresponding derangement graph satisfies the EKR-module property. In Chapter~5 of \cite{GMbook}, there are a few examples of strongly regular graphs satisfying the EKR-module property. In \cite{EKRpaley}, the authors show that Peisert-type graphs satisfy the EKR-module property. Let $q$ be an odd prime power. Let $F$ and $E$ be finite fields of order $q^{2}$ and $q$ respectively. A Peisert-type graph of type $(m,q)$ is a Cayley graph of the form $Cay(F,S)$, where the ``connection'' set $S$ is a union of $m$ distinct cosets of the multiplicative group $E^{\times}$ in $F^{\times}$. It is clear that any set of the form $sE+b$, with $s \in S$ and $b\in F$, is a clique. We deem these to be the canonical cliques. In \cite{EKRpaley}, the authors show that characteristic vector of any maximum clique in a Peisert-type graph, is a linear combination of the characteristic vectors of canonical cliques. In \S~\ref{srg}, we give a shorter independent proof of the same.

\section{EKR-module property and character theory.}\label{prelim}

In this section, we gather some tools which are used to prove our main results, and then prove Theorem~\ref{idekrm}.


Let $K=G_{(\Omega)}$ be the kernel of $G$ on $\Omega$. Here $\Omega=[G : H]$ for some $H \leq G$.
The following simple lemma shows that we may assume without loss of generality that $K$ is trivial. 

\begin{lemma}\label{lem:kernel}
Let $\pi:G \to G/K$ be the natural quotient map. 
Then a subset $\I\subset G$ is a maximum intersecting set of $G$ if and only if $\pi(\I)$ is a maximum intersecting set of $G/K$.
%
\end{lemma}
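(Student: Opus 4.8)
The plan is to exploit that $K=G_{(\Omega)}$ is the core $\bigcap_{g\in G}gHg^{-1}$ of $H$ in $G$, hence a normal subgroup of $G$ contained in $H$, and that every element of $K$ fixes every point of $\Ome$. Since $K\leqslant H$ is normal, the action of $G$ on $\Ome=[G:H]$ is the inflation of the faithful action of $G/K$ on $[G/K:H/K]$, and under the identification $\Ome=[G/K:H/K]$ the quotient map $\pi$ intertwines the two actions: for $\s\in G$ and $\alpha\in\Ome$ we have $\s\cdot\alpha=\alpha$ if and only if $\pi(\s)\cdot\alpha=\alpha$. I would first record the immediate consequence that two elements $\s,\t\in G$ intersect (i.e.\ $\s\t^{-1}$ fixes a point) if and only if their images $\pi(\s),\pi(\t)$ intersect in $G/K$, since $\pi(\s\t^{-1})=\pi(\s)\pi(\t)^{-1}$ and fixing a point is detected after applying $\pi$.

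The crucial step is to show that a maximum intersecting set of $G$ is necessarily a union of cosets of $K$. For this I would prove that if $\I\subseteq G$ is any intersecting set then so is $\I K$. Indeed, given $g_1 k_1, g_2 k_2 \in \I K$ with $g_i\in\I$ and $k_i\in K$, normality of $K$ lets me write $(g_1k_1)(g_2k_2)^{-1}=g_1 g_2^{-1} k'$ for some $k'\in K$; since $g_1,g_2$ intersect there is $\alpha\in\Ome$ with $g_1g_2^{-1}\cdot\alpha=\alpha$, and as $k'$ fixes every point we get $g_1g_2^{-1}k'\cdot\alpha=\alpha$, so $g_1k_1$ and $g_2k_2$ intersect. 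In particular each coset $gK$ is itself intersecting, and $\I K\supseteq\I$ is an intersecting set of size $|K|\cdot|\pi(\I)|$. Consequently, if $\I$ is maximum then maximality forces $\I=\I K$; that is, $\I$ is a union of $K$-cosets and $\I=\pi^{-1}(\pi(\I))$.

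It then remains to transport maximality across $\pi$. Using the equivalence of intersection from the first paragraph, $\pi$ restricts to a bijection between intersecting sets of $G$ that are unions of $K$-cosets and intersecting sets of $G/K$: the forward map is $\I\mapsto\pi(\I)$ and its inverse is $\bar{\I}\mapsto\pi^{-1}(\bar{\I})$, and this bijection multiplies cardinalities by the constant $|K|$. Hence the maximum size of an intersecting set of $G$ equals $|K|$ times the maximum size of an intersecting set of $G/K$, and a $K$-coset-union $\I$ is maximum in $G$ exactly when $\pi(\I)$ is maximum in $G/K$. Combined with the previous paragraph, which guarantees that every maximum intersecting set of $G$ is such a $K$-coset-union, this yields the stated equivalence. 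The only real subtlety, and the step I expect to require the most care, is the reduction in the second paragraph: one must use normality of $K$ to move $k_1 k_2^{-1}$ past $g_2^{-1}$ and then use that $K$ acts trivially, so that the point fixed by $g_1g_2^{-1}$ remains fixed after right-multiplying by an element of $K$.
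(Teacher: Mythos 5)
Your proof is correct and takes essentially the same route as the paper: show that $\I K$ is again intersecting, deduce that every maximum intersecting set is a union of $K$-cosets, and then transfer maximality across $\pi$ by comparing cardinalities. You in fact supply the one computation the paper leaves implicit, namely the normality argument showing $(g_1k_1)(g_2k_2)^{-1}=g_1g_2^{-1}k'$ with $k'\in K$ acting trivially.
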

\begin{proof}
Given an intersecting set $A \subset G$, we note that $AK:=\{ak\ :\ a\in A\ \&\ k\in K\}$ is also an intersecting set. So any maximum intersecting set in $G$ must be a union of $K$-cosets. Let $s_{1},s_{2},\ldots s_{r} \in G$ be such that $\I = \bigcup s_{i}K$ is a maximum intersecting subset of $G$. We see that $\pi(\I)=\{s_{i}K\ :\ 1\leq i \leq r\}$ is an intersecting set in $G/K$.

Now consider a maximum intersecting set $\mathcal{T}=\{t_{i}K\ :\ 1\leq i\leq s\}$ of $G/K$. It is clear that $\pi^{-1}(\mathcal{T})=\bigcup t_{i}K \subset G$ is an intersecting set. So we have $|\mathcal{T}||K|=|\pi^{-1}(\mathcal{T})|\leq |\I|$, and $|\I|/|K|=|\pi(\I)| \leq |\mathcal{T}|$. This shows that $\pi(\I)$ (respectively $\pi^{-1}(\mathcal{T})$) is a maximum intersecting set of $G/K$ (respectively $G$).
\end{proof}
As an immediate consequence, we get the following corollary.

\begin{cor}\label{permred}
Let $G$ be a finite transitive group on $\Omega$ with kernel $K=G_{(\Omega)}$, and let $\pi:G \to G/K$ be the natural quotient map. 
Then the following hold:
\begin{enumerate}[{\rm(i)}]
\item $G$ satisfies the EKR (respectively strict-EKR) property if and only if $G/K$ satisfies the EKR (respectively strict-EKR);

\item $G$ satisfies the EKR-module property if and only if $G/K$ satisfies the EKR-module property.
\end{enumerate}
\end{cor}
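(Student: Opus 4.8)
The plan is to derive both parts directly from the bijection between maximum intersecting sets of $G$ and those of $G/K$ furnished by Lemma~\ref{lem:kernel}, combined with the elementary fact that $K$, being the kernel of the action, is contained in every point stabilizer. Consequently each $G_\alpha$ is a union of $K$-cosets, $\pi(G_\alpha)=(G/K)_\alpha$, and $|G_\alpha|=|K|\,|(G/K)_\alpha|$. These observations will let me transfer each of the three properties across $\pi$.

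For part (i), the EKR case is immediate: by Lemma~\ref{lem:kernel} the maximum intersecting set size of $G$ is exactly $|K|$ times that of $G/K$, while $|G|/|\Ome|=|G_\alpha|=|K|\,|(G/K)_\alpha|$, so the bound $|G_\alpha|$ is attained upstairs precisely when $|(G/K)_\alpha|$ is attained downstairs. For strict-EKR I would check that $\pi$ matches canonical sets with canonical sets in both directions, namely $\pi(gG_\alpha)=\bar g\,(G/K)_\alpha$ and every coset of $(G/K)_\alpha$ pulls back to a coset of $G_\alpha$; feeding this into the bijection of Lemma~\ref{lem:kernel} shows ``every maximum intersecting set is canonical'' holds for $G$ iff it holds for $G/K$. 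This part is routine.

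Part (ii) is the substantive one. I would extend $\pi$ linearly to an algebra homomorphism $\tilde\pi:\bb{C}G\to\bb{C}(G/K)$ and record the two computations $\tilde\pi(\ch_{\I})=|K|\,\ch_{\pi(\I)}$ (each fiber of $\pi$ over a point of $\pi(\I)$ has size $|K|$, since $\I$ is a union of $K$-cosets) and $\tilde\pi(\ch_{\alpha,\beta})=|K|$ times the corresponding canonical vector of $G/K$ (since $\{t:t\cdot\alpha=\beta\}$ is a union of $|K|$-element fibers, using $K\leq G_\alpha$). The forward implication then follows at once: if $\ch_{\I}=\sum c_{\alpha\beta}\ch_{\alpha,\beta}$ for every maximum intersecting $\I$ of $G$, applying $\tilde\pi$ and cancelling $|K|$ expresses each $\ch_{\pi(\I)}$ as a combination of canonical vectors of $G/K$, and since every maximum intersecting set of $G/K$ is $\pi(\I)$ for some $\I$, the EKR-module property passes to $G/K$.

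The reverse implication is the main obstacle, because an arbitrary combination of canonical vectors in $\bb{C}(G/K)$ must be lifted to a combination of the $\ch_{\alpha,\beta}$ in $\bb{C}G$, not merely to some element of $\bb{C}G$. Here I would use that, $K$ being normal, the averaging element $e=\tfrac{1}{|K|}\ch_{K}$ is a central idempotent, so that the well-defined linear map $\psi:\bb{C}(G/K)\to\bb{C}G$, $\bar g\mapsto ge$, is a section of $\tilde\pi$; well-definedness is exactly the identity $ke=e$ for $k\in K$. A direct computation gives $\psi(\ch_{\pi(\I)})=\tfrac{1}{|K|}\ch_{\I}$ and sends each canonical vector of $G/K$ to $\tfrac{1}{|K|}\ch_{\alpha,\beta}$, the key point (again using $K\leq G_\alpha$, so that $t\cdot\alpha=\beta\iff\bar t\cdot\alpha=\beta$) being that $\psi$ lands on scaled \emph{canonical} vectors rather than arbitrary elements. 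Applying $\psi$ to the downstairs expansion of $\ch_{\pi(\I)}$ and clearing $\tfrac{1}{|K|}$ writes each $\ch_{\I}$ as a combination of the $\ch_{\alpha,\beta}$, finishing (ii). I expect the only real care to be in verifying this last behaviour of $\psi$; alternatively one could route the argument through Theorem~\ref{idekrm}, using Clifford theory to show that the $\chi\in C$ nontrivial on $K$ satisfy $\chi(\ch_{\I})=0$ automatically so that the criterion for $G$ collapses to that for $G/K$, but the idempotent argument above seems more self-contained.
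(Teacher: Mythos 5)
Your proof is correct and takes essentially the same route as the paper: the paper's own proof consists only of the observation that $\pi(gG_{\omega})=\pi(g)(G/K)_{\omega}$ and $gG_{\omega}=\pi^{-1}\left(\pi(g)(G/K)_{\omega}\right)$ together with an appeal to Lemma~\ref{lem:kernel}. Your pushforward $\tilde\pi$ and the idempotent section $\psi$ simply make explicit the two-way transfer of linear combinations of canonical vectors that the paper leaves implicit, and both computations check out.
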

\begin{proof}
Set $Q:=G/K$. We note that for all $\omega\in \Omega$ and $g\in G$, we have $\pi(g)Q_{\omega}=\pi(gG_{\omega})$ and $gG_{\omega}=\pi^{-1}(\pi(g)Q_{\omega})$. 
The proof now follows from Lemma~\ref{lem:kernel}.
\end{proof}

For any $g\in G$, we denote by $H^{g}$ the subgroup $gHg^{-1}$.
Given $\alpha=aH \in \Omega$, we have $G_{\alpha}=H^{a}$. Thus, with respect to this action, we see that the set $\{aH^{b}\ :\ a,b\in G\}$ is the set of canonical intersecting sets. By $\mf{I}_{G}(\Omega)$, we denote the subspace of $\bb{C}G$ spanned by the set $\{\ch_{aH^{b}}\ :\ a,b\in G\}$ of the characteristic vectors of the canonical intersecting sets. By the definition of the EKR-module property, the action of $G$ on $\Omega$ satisfies the EKR-module property if and only if $\ch_{\I} \in \mf{I}_{G}(\Omega)$ for every maximum intersecting set $\I$ in $G$.

We observe that for every $a,\ b,\ g,\ h \in G$, we have $g\ch_{aH^{b}}h= \ch_{gah H^{h^{-1}b}}$. Therefore, $\mf{I}_{G}(\Omega)$ is a two-sided ideal of the group algebra $\bb{C}G$. The two-sided ideals of complex group algebras are characterized by the Artin-Wedderburn decomposition.

We will now recall some basic facts on group algebra, proofs of which can be found in any standard text on representation theory such as \cite{isaacs1994character}.
Let $\Irr(G)$ be the set of irreducible complex characters of $G$. For $\chi \in \Irr(G)$, we define $M_{\chi}:=\sum\limits_{i=1}^{\chi(1)}W_{i}$, where $\{W_{1},W_{2}, \ldots W_{\chi(1)}\}$ are the right submodules of $\bb{C}G$ that afford the character $\chi$. By Maschke's theorem, we have the decomposition

$$\bb{C}G=\bigoplus\limits_{\chi \in \Irr(G)} M_{\chi}.$$

 For each $\chi \in \Irr(G)$, we have $\mrm{dim}_{\bb{C}}(M_{\chi})=\chi(1)^{2}$ and that $M_{\chi}$ is a minimal two-sided ideal containing the primitive central idempotent
$$e_{\chi}=\dfrac{\chi(1)}{|G|}\sum\limits_{g \in G} \chi(g^{-1})g.$$ By orthogonality relations among characters, we have
\begin{equation}\label{oidem}
e_{\chi}e_{\psi} =\begin{cases} 1\ \text{,\ \ if $\chi=\psi$,} \\
0\ \text{,\ \ otherwise.}
\end{cases}
\end{equation}

Using the fact that $\bb{C}G$ is a semi-simple algebra, we now get the following description of two-sided ideals of $\bb{C}G$

\begin{lemma}\label{ides}
Given a two-sided ideal $\mathfrak{J}$ of $\bb{C}G$, there is a subset $Y_{\mathfrak{J}} \subseteq \Irr(G)$ such that $\mathfrak{J}=\bigoplus\limits_{\chi \in Y_{\mathfrak{J}}} \left\langle e_{\chi}\right\rangle$.
\end{lemma}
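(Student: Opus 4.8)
The plan is to exploit the semi-simplicity of $\bb{C}G$ and the fact that the primitive central idempotents $e_\chi$, ranging over $\chi \in \Irr(G)$, form a complete set of orthogonal central idempotents summing to the identity. My goal is to show that any two-sided ideal $\mathfrak{J}$ is a direct sum of the minimal ideals $\langle e_\chi\rangle = M_\chi$ over some subset $Y_{\mathfrak{J}}$ of $\Irr(G)$.

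First I would use the decomposition $\bb{C}G = \bigoplus_{\chi \in \Irr(G)} M_\chi$ together with the identity $1 = \sum_{\chi} e_\chi$, which follows from the orthogonality relations recorded in~\eqref{oidem} (each $M_\chi = \langle e_\chi\rangle$ since $e_\chi$ is the central idempotent generating the minimal two-sided ideal $M_\chi$). Since $\mathfrak{J}$ is a two-sided ideal, for any $x \in \mathfrak{J}$ I can write $x = x\cdot 1 = \sum_\chi x e_\chi$, and each summand $xe_\chi$ lies in $\mathfrak{J}$ (because $\mathfrak{J}$ is a right ideal) and simultaneously in $M_\chi$ (because $e_\chi$ is central, so $xe_\chi \in \bb{C}G\, e_\chi = M_\chi$). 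Thus $\mathfrak{J} \subseteq \bigoplus_\chi (\mathfrak{J} \cap M_\chi)$, and the reverse inclusion is immediate, so $\mathfrak{J} = \bigoplus_\chi (\mathfrak{J} \cap M_\chi)$.

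The key step is then to pin down each intersection $\mathfrak{J} \cap M_\chi$. Since $M_\chi$ is a \emph{minimal} two-sided ideal and $\mathfrak{J} \cap M_\chi$ is a two-sided ideal contained in it, minimality forces $\mathfrak{J} \cap M_\chi$ to be either $\{0\}$ or all of $M_\chi = \langle e_\chi\rangle$. I would define $Y_{\mathfrak{J}} := \{\chi \in \Irr(G) : e_\chi \in \mathfrak{J}\}$, equivalently the set of $\chi$ for which $\mathfrak{J} \cap M_\chi = M_\chi$, and conclude $\mathfrak{J} = \bigoplus_{\chi \in Y_{\mathfrak{J}}} \langle e_\chi\rangle$ as claimed.

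The only genuine subtlety — the place I would be most careful — is the minimality argument: justifying that $M_\chi$ admits no proper nonzero two-sided ideal. This rests on the Artin--Wedderburn identification $M_\chi \cong \mathrm{Mat}_{\chi(1)}(\bb{C})$ as an algebra (consistent with $\dim_{\bb{C}} M_\chi = \chi(1)^2$), and the standard fact that a full matrix algebra over $\bb{C}$ is simple. Everything else is bookkeeping with the orthogonal idempotents, so I do not anticipate any real obstacle beyond citing this simplicity cleanly from the standard reference~\cite{isaacs1994character}.
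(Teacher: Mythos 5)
Your proof is correct and is exactly the standard argument the paper has in mind: the paper states Lemma~\ref{ides} without proof, as an immediate consequence of the semisimplicity facts it records just beforehand (the decomposition $\bb{C}G=\bigoplus_\chi M_\chi$, the minimality of each $M_\chi$ as a two-sided ideal, and the orthogonality of the $e_\chi$). Your decomposition of $x=\sum_\chi xe_\chi$ and the appeal to minimality of $M_\chi$ (which the paper already asserts, so the Artin--Wedderburn justification is a welcome but optional extra) fill in precisely the omitted details.
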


Our investigation of the EKR-module property of the action of $G$ on $\Omega$, will benefit from the description of $\mf{I}_{G}(\Omega)$ as a direct sum of simple ideals of $\bb{C}G$. We recall that $\mf{I}_{G}(\Omega)$, is the subspace of $\bb{C}G$ spanned by the set $\{\ch_{aH^{b}}\ :\ a,b\in G\}$. 
We also showed that it is a two-sided ideal.

\begin{lemma}\label{caniddes}
Let $G$ be a finite group, $H<G$ a subgroup, and $\Omega=[G : H]$ be the space of left cosets of $H$. Let $Y_{H}=\{\chi \in \Irr(G)\ :\ \chi(\ch_{H}) \neq 0\}$.
Then, the linear span $\mf{I}_{G}(\Omega)$ of $\{\ch_{aH^{g}}\ :\ a,g\in G\}$ decomposes as the sum $\bigoplus\limits_{\chi \in Y_{H}} \left\langle e_{\chi}\right\rangle$ of simple ideals of $\bb{C}G$.
\end{lemma}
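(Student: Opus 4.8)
The plan is to recognize $\mf{I}_{G}(\Omega)$ as the two-sided ideal of $\bb{C}G$ generated by the single element $\ch_{H}$, and then to read off its Artin--Wedderburn components by detecting, for each $\chi \in \Irr(G)$, whether the projection of $\ch_{H}$ to $M_{\chi}$ survives. First I would exploit the multiplication rule $g\ch_{aH^{b}}h=\ch_{gahH^{h^{-1}b}}$ recorded just before the statement. Taking $a=b=e$ gives $g\,\ch_{H}\,h=\ch_{ghH^{h^{-1}}}$, and setting $g=ab$, $h=b^{-1}$ yields $\ch_{aH^{b}}=(ab)\,\ch_{H}\,b^{-1}$. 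Hence every spanning vector $\ch_{aH^{b}}$ of $\mf{I}_{G}(\Omega)$ lies in $\bb{C}G\,\ch_{H}\,\bb{C}G$, while conversely each product $g\,\ch_{H}\,h$ is itself one of the $\ch_{aH^{b}}$. Therefore $\mf{I}_{G}(\Omega)=\bb{C}G\,\ch_{H}\,\bb{C}G$ is exactly the two-sided ideal generated by $\ch_{H}$.

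Next I would decompose $\ch_{H}=\sum_{\chi\in\Irr(G)}\ch_{H}e_{\chi}$ along $\bb{C}G=\bigoplus_{\chi}M_{\chi}$, using that each $e_{\chi}$ is central so $\ch_{H}e_{\chi}=e_{\chi}\ch_{H}\in M_{\chi}$. Since each $M_{\chi}$ is a simple algebra (a full matrix algebra), any of its nonzero elements generates all of $M_{\chi}$ as a two-sided ideal; because $\ch_{H}e_{\chi}\in\mf{I}_{G}(\Omega)$ for every $\chi$, the component $M_{\chi}$ is contained in $\mf{I}_{G}(\Omega)$ precisely when $\ch_{H}e_{\chi}\neq 0$, and no other components occur. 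Thus $\mf{I}_{G}(\Omega)=\bigoplus_{\chi:\ \ch_{H}e_{\chi}\neq 0}\left\langle e_{\chi}\right\rangle$, and it remains only to show that $\ch_{H}e_{\chi}\neq 0$ if and only if $\chi(\ch_{H})\neq 0$, i.e. that the surviving set of $\chi$ is exactly $Y_{H}$.

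To establish this equivalence I would fix an irreducible representation $\rho$ affording $\chi$ on a space $V_{\chi}$, and use the algebra isomorphism $M_{\chi}\cong\mathrm{End}(V_{\chi})$ under which $\ch_{H}e_{\chi}$ corresponds to $\rho(\ch_{H})=\sum_{h\in H}\rho(h)$; hence $\ch_{H}e_{\chi}=0$ if and only if $\rho(\ch_{H})=0$. The key observation is that $\tfrac{1}{|H|}\rho(\ch_{H})$ is the idempotent projecting $V_{\chi}$ onto its subspace $V_{\chi}^{H}$ of $H$-fixed vectors, so its trace equals $\dim V_{\chi}^{H}$; concretely $\mathrm{tr}\,\rho(\ch_{H})=\sum_{h\in H}\chi(h)=\chi(\ch_{H})=|H|\cdot\dim V_{\chi}^{H}$. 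Because a projection operator is zero exactly when its trace (its rank) vanishes, we get $\rho(\ch_{H})=0$ if and only if $\chi(\ch_{H})=0$. Combining the displayed identifications, $\ch_{H}e_{\chi}\neq 0$ iff $\chi(\ch_{H})\neq 0$, i.e. iff $\chi\in Y_{H}$, which gives $\mf{I}_{G}(\Omega)=\bigoplus_{\chi\in Y_{H}}\left\langle e_{\chi}\right\rangle$ as claimed.

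I expect the main obstacle to be the middle step: cleanly justifying that a two-sided ideal generated by one element equals the sum of exactly those simple blocks $M_{\chi}$ onto which the element has nonzero projection. This relies on the simplicity of each $M_{\chi}$ together with the centrality of $e_{\chi}$, and it is what converts the structural question into the numerical one. The final trace computation is then the crux that links ``nonzero projection of $\ch_{H}$ in $M_{\chi}$'' to the purely character-theoretic condition $\chi(\ch_{H})\neq 0$; the only care needed there is the standard fact that $\tfrac{1}{|H|}\ch_{H}$ maps to a projection, so that its vanishing is equivalent to the vanishing of its trace.
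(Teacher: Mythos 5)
Your proposal is correct, and it reaches the conclusion by a somewhat different route than the paper. The paper never isolates $\ch_{H}$ as a single generator: it argues block by block, first showing $e_{\chi}\ch_{H}\neq 0$ for $\chi\in Y_{H}$ and invoking minimality of $\left\langle e_{\chi}\right\rangle$ to get $\left\langle e_{\chi}\right\rangle\subseteq \mf{I}_{G}(\Omega)$, and then, for $\theta\notin Y_{H}$, working with a \emph{unitary} representation $\Theta$ and the identity $M_{H^{g}a^{-1}}M_{aH^{g}}=|H|M_{H^{g}}$ to deduce $\mrm{Tr}\bigl(M_{aH^{g}}^{\dagger}M_{aH^{g}}\bigr)=0$ and hence $M_{aH^{g}}=0$ for \emph{every} generator $\ch_{aH^{g}}$. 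You instead first prove the clean structural fact $\mf{I}_{G}(\Omega)=\bb{C}G\,\ch_{H}\,\bb{C}G$ via $\ch_{aH^{b}}=(ab)\ch_{H}b^{-1}$, which reduces everything to the single element $\ch_{H}$, and you replace the unitarity argument by the observation that $\tfrac{1}{|H|}\rho(\ch_{H})$ is an idempotent, so its vanishing is equivalent to the vanishing of its trace $\tfrac{1}{|H|}\chi(\ch_{H})$ (trace equals rank for a projection). Both arguments ultimately rest on a ``trace zero forces the operator to vanish'' step valid only for special operators; yours gets it from idempotency, the paper's from positive semidefiniteness of $M^{\dagger}M$. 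Your reduction to one generator makes the all-or-nothing behaviour of the simple blocks more transparent and avoids choosing a unitary model, while the paper's version has the side benefit of explicitly establishing $\theta(\ch_{aH^{g}})=0$ for each coset of each conjugate, which it reuses implicitly elsewhere. The only point worth making fully explicit in your write-up is that the projection of $\mf{I}_{G}(\Omega)=\bb{C}G\,\ch_{H}\,\bb{C}G$ onto the block $M_{\chi}$ is exactly $\bb{C}G\,(\ch_{H}e_{\chi})\,\bb{C}G$ (using centrality of $e_{\chi}$), so that the blocks with $\ch_{H}e_{\chi}=0$ genuinely contribute nothing; you state this, and it is correct.
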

\begin{proof}
For any subset $S \subset G$ and $\psi \in \Irr(G)$, we have
\begin{align*}
\frac{|G|}{\psi(1)}e_{\psi}\ch_{S}&= \frac{|G|}{\psi(1)}\sum\limits_{s \in S} e_{\psi}s\notag\\
&=\sum\limits_{s \in S}\sum\limits_{g \in G} \psi(sg^{-1})g \notag \\
&= \sum\limits_{g\in G}g \sum\limits_{s \in S} \psi(sg^{-1})\notag\\
&=\sum\limits_{g\in G} \psi(\ch_{S g^{-1}})g.
\end{align*}
Therefore for any $\chi \in Y_{H}$, we have $0 \neq e_{\chi} \ch_{H} \in \left\langle e_{\chi}\right\rangle \cap \mf{I}_{G}(\Omega) \subset \mf{I}_{G}(\Omega)$. As $\left\langle e_{\chi}\right\rangle$ is a minimal ideal, we conclude that $\left\langle e_{\chi}\right\rangle \subset \mf{I}_{G}(\Omega)$.

Now consider $\theta \in \Irr(G) \setminus Y_{H}$. In this case, we have $\theta(\ch_{H^g})=\theta(\ch_{H})=0$. Let $\Theta : G \to \mrm{GL}_{\theta(1)}(\bb{C})$ be a \textbf{unitary} representation affording $\theta$ as its character. Given a subset $S\subset G$, we define $M_{S}:=\sum\limits_{s \in S}\Theta(s)$. As $\Theta$ is a unitary representation, we have $M_{S^{-1}}= M_{S}^{\dagger}$, that is, $M_{S^{-1}}$ is the conjugate transpose of $M_{S}$. Now given $a, g \in H$, we have
$$M_{H^{g}a^{-1}}M_{aH^{g}}= \sum \limits_{x, y \in H^{g}} \Theta(x)\Theta(y)= \sum \limits_{x, y \in H^{g}} \Theta(xy)= |H|M_{H^{g}}.$$ 
As $\theta(\ch_{H^{g}})=0$, we have $0=\mrm{Tr}(M_{aH^{g}})=\mrm{Tr}(M_{H^{g}a^{-1}}M_{aH^{g}})$. 
Since $M_{H^{g}a^{-1}}=M_{aH^{g}}^{\dagger}$, this can only happen when $M_{aH^{g}}=0$. 
Therefore, $\theta(\ch_{aH^{g}})=0$, and we conclude that $e_{\theta}\ch_{aH^{g}}=0$. 
Thus $e_{\theta}$ annihilates $\mf{I}_{G}(\Omega)$.
As $e_{\theta}^{2}=1\neq 0$, $e_{\theta}$ cannot be an element of the ideal $\mf{I}_{G}(\Omega)$. Now the result follows by applying Lemma \ref{ides} and equation \eqref{oidem}.
\end{proof}

As an immediate application, we obtain a significantly shorter proof of Lemma 4.1 and Lemma 4.2 of \cite{ahmadi2015erdHos}. The content of these two results is presented as the following corollary. We will use the following technical result in the proof of Theorem~\ref{Rank3}. We would like to mention that it was a key result that led to the ``Module Method'' described in \cite{ahmadi2015erdHos}.

\begin{cor}\label{2t}
Let $G \leq \Sym(\Omega)$ be a $2$-transitive permutation group with $\pi \in \Irr(G)$ such that $1+\pi$ is the corresponding permutation character. Given $\alpha, \beta \in \Omega$, set $\ch_{\alpha, \beta} := \sum\limits_{\{g \in G\ :\ g\cdot \alpha =\beta\}}g$ and $\ch_{G}:=\sum\limits_{g\in  G}g$.
 Then
\begin{enumerate}
\item $\mf{I}_{G}(\Omega)=\left\langle e_{1}\right\rangle + \left\langle e_{\pi}\right\rangle $ is a vector space of dimension $1+(|\Omega|-1)^{2}$;
\item and the set $$B_{\omega}:=\{\ch_{G}\} \cup \ \{\ch_{\alpha,\ \beta}\ : \ (\alpha,\beta)\in \left(\Omega \setminus \{\omega\} \right)^{2}\},$$ is a basis set of $\mf{I}_{G}(\Omega)$ for any $\omega \in \Omega$.
\end{enumerate}
\end{cor}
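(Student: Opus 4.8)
The plan is to apply Lemma~\ref{caniddes} with the specific structure of a $2$-transitive group, where $H=G_\alpha$ is a point stabilizer and the permutation character is $1+\pi$ with $\pi\in\Irr(G)$. First I would identify the set $Y_H=\{\chi\in\Irr(G):\chi(\ch_H)\neq 0\}$. The key observation is that $\chi(\ch_H)=\sum_{h\in H}\chi(h)=|H|\langle\chi\!\downarrow_H,1_H\rangle$, which is $|H|$ times the multiplicity of the trivial character of $H$ in $\chi\!\downarrow_H$. By Frobenius reciprocity this multiplicity equals $\langle\chi,1_H\!\uparrow^G\rangle=\langle\chi,1+\pi\rangle$, the multiplicity of $\chi$ in the permutation character. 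For a $2$-transitive group the permutation character is $1+\pi$ with $\pi$ irreducible, so this inner product is nonzero exactly when $\chi\in\{1,\pi\}$. Hence $Y_H=\{1,\pi\}$, and Lemma~\ref{caniddes} immediately yields $\mf{I}_G(\Omega)=\langle e_1\rangle\oplus\langle e_\pi\rangle$.

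For the dimension count in part~(1), I would use the standard fact recalled before Lemma~\ref{ides}, namely $\dim_{\bb{C}}\langle e_\chi\rangle=\dim_{\bb{C}}(M_\chi)=\chi(1)^2$. Thus $\dim\mf{I}_G(\Omega)=1^2+\pi(1)^2=1+(|\Omega|-1)^2$, since $\pi(1)=|\Omega|-1$ for a $2$-transitive action (the permutation character has degree $|\Omega|$ and splits as $1+\pi$). This part is essentially a bookkeeping consequence of the character-theoretic identification of $Y_H$.

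The substantive part is~(2): showing $B_\omega$ is a basis. I would first check that $B_\omega\subset\mf{I}_G(\Omega)$: the vector $\ch_G=|G|e_1/1$ is a scalar multiple of $e_1$ up to the normalization $e_1=\frac{1}{|G|}\sum_g g$, so $\ch_G\in\langle e_1\rangle$, and each $\ch_{\alpha,\beta}$ is a canonical vector, hence lies in $\mf{I}_G(\Omega)$ by definition. The cardinality of $B_\omega$ is $1+(|\Omega|-1)^2$, matching the dimension from part~(1), so it suffices to prove linear independence. Here I would apply the two projections $e_1\cdot(-)$ and $e_\pi\cdot(-)$, or equivalently evaluate the characters $1$ and $\pi$ together with the associated matrix-coefficient functionals. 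The cleanest route is to use the linear functionals coming from the matrix entries of a representation affording $\pi$: writing each $\ch_{\alpha,\beta}$ in terms of its image in $\mrm{End}(V_\pi)\oplus\mrm{End}(V_1)$, I would show that the $(|\Omega|-1)^2$ vectors $\{\ch_{\alpha,\beta}:(\alpha,\beta)\in(\Omega\setminus\{\omega\})^2\}$ project to a basis of $\mrm{End}(V_\pi)\cong M_{(|\Omega|-1)\times(|\Omega|-1)}(\bb{C})$, while $\ch_G$ supplies the remaining dimension in $\langle e_1\rangle$ and has zero $\pi$-component since $\pi(\ch_G)=\sum_g\pi(g)=0$.

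The main obstacle will be establishing that the projections of the $\ch_{\alpha,\beta}$ (with $\alpha,\beta$ ranging away from the fixed point $\omega$) are linearly independent in the $(|\Omega|-1)^2$-dimensional block $\langle e_\pi\rangle$. The natural strategy is to identify $\langle e_\pi\rangle\cong\mrm{End}(V_\pi)$ and to relate the image of $\ch_{\alpha,\beta}$ under the map $\ch_S\mapsto M_S=\sum_{s\in S}\Theta(s)$ to a rank-one-type operator indexed by $(\alpha,\beta)$; using the $2$-transitivity one can realize $V_\pi$ inside the permutation module as the orthogonal complement of the all-ones vector, and then $M_{\ch_{\alpha,\beta}}$ acts essentially as $|H|$ times the operator sending the basis vector $e_\alpha$ to $e_\beta$ (projected into $V_\pi$). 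Deleting the row and column indexed by $\omega$ then gives exactly the $(|\Omega|-1)\times(|\Omega|-1)$ matrix units, which are manifestly a basis of $\mrm{End}(V_\pi)$. Making this identification precise, and verifying that removing $\omega$ does not create degeneracies while $\ch_G$ correctly accounts for the one trivial-block dimension, is the crux of the argument; everything else is formal.
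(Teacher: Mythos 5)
Your argument for part (1) is exactly what the paper leaves implicit in ``follows immediately from Lemma~\ref{caniddes}'': Frobenius reciprocity identifies $Y_H=\{1,\pi\}$, and the dimension is $1^2+\pi(1)^2$. For part (2), however, you take the route dual to the paper's. The paper proves that $B_\omega$ \emph{spans} $\mf{I}_G(\Omega)$ via the elementary identities $\sum_{\beta\in\Omega}\ch_{\alpha,\beta}=\ch_G=\sum_{\alpha\in\Omega}\ch_{\alpha,\beta}$, which let one recover every $\ch_{\alpha,\omega}$ and $\ch_{\omega,\delta}$ from $B_\omega$; linear independence is then free because $|B_\omega|=\dim\mf{I}_G(\Omega)$. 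You instead propose to prove \emph{independence} directly and get spanning from the dimension count. Your route does work, and the ``crux'' you flag is completable: identifying $\left\langle e_\pi\right\rangle$ with $\mrm{End}(V_\pi)$ via the (faithful, since the ideal is simple) action on the permutation module, a short computation shows that the $\pi$-component of $\ch_{\alpha,\beta}$ is $\frac{n|H|}{n-1}\,\hat{e}_\beta\hat{e}_\alpha^{T}$ where $n=|\Omega|$ and $\hat{e}_\gamma=e_\gamma-\frac1n\mathbf{1}$; since $\{\hat{e}_\gamma:\gamma\neq\omega\}$ is a basis of $V_\pi$, these $(n-1)^2$ rank-one operators are independent, and $\ch_G=|G|e_1$ supplies the trivial block. (One small caveat: they are not literally the standard matrix units after ``deleting the row and column indexed by $\omega$'' --- they are rank-one operators built from the non-orthogonal basis $\{\hat e_\gamma\}$ --- but independence holds all the same.) The trade-off is clear: your approach costs an explicit computation in $\mrm{End}(V_\pi)$ but yields a concrete description of the projections of the canonical vectors, whereas the paper's spanning argument is two lines and uses nothing beyond the partition identities for $\ch_G$. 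Both are valid proofs.
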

\begin{proof}
Part (1) follows immediately from Lemma~\ref{caniddes}.

We observe that for $\alpha \in \Omega$, we have $\ch_{\alpha, \omega}=\ch_{G}-\sum\limits_{\beta \neq \omega}\ch_{\alpha,\ \beta}$. 
Therefore, every vector of the form $\ch_{\gamma,\ \delta}$ is in the linear span of the elements of $B_{\omega}$, and thus $B_{\omega}$ spans $\mf{I}_{G}(\Omega)$. 
Linear independence follows as $\mf{I}_{G}(\Omega)$ is a $|B_{\omega}|$-dimensional subspace.
\end{proof}

We are now ready to prove Theorem~\ref{idekrm}.

{\bf Proof of Theorem~\ref{idekrm}:}
Given $\chi \in C=\{\chi \in \Irr(G)\ :\ \chi(\ch_{H})=0\}$, by Lemma~\ref{caniddes} and \eqref{oidem}, we have $e_{\chi}x=0$, for all $x \in \mf{I}_{G}(\Ome)$. By the definition of the EKR-module property, for any maximum intersecting set $\I$, we have $\ch_{\I}\in \mf{I}_{G}(\Ome)$. 
The equality $\chi(\ch_{\I})=0$ follows from $e_{\chi} \ch_{\I}=0$.

We now prove the other direction. Suppose that for any $\chi \in C$ and any maximum intersecting set $\I$, we have $\chi(\ch_{\I})=0$. Fix a maximum intersecting set $\I$. If $\I$ is a maximum intersecting set, then so is $\I g^{-1}$, for all $g\in G$. Therefore, $\chi(\ch_{\I g^{-1}})=0$, for all $\chi \in C$ and all $g \in G$. Thus $\frac{|G|}{\chi(1)}e_{\chi}\ch_{\I}= \sum\limits_{g \in G} \chi(\ch_{\I g^{-1}})g=0$, for all $\chi \in C$. 
Further, by the equality $\sum\limits_{\psi \in \Irr(G)}e_{\psi}=1$, we have
\begin{align*}
\ch_{\I} & = \left(\sum\limits_{\psi \in \Irr(G)}e_{\psi}\right) \times \ch_{\I}
= \left(\sum\limits_{\psi \notin C}e_{\psi}\right) \times \ch_{\I}.
\end{align*}
By Lemma~\ref{caniddes}, $\sum\limits_{\psi \notin C}e_{\psi} \in \mf{I}_{G}(\Ome)$. Since $\mf{I}_{G}(\Ome)$ is an ideal, we have $\ch_{\I} \in \mf{I}_{G}(\Ome)$. 
Thus the EKR-module property is satisfied.
\qed

We note that if $\I$ is a maximum intersecting set, then for any $t\in \I$, the set $\I t^{-1}$ is a maximum intersecting set that contains the identity element. So every maximum intersecting set is a ``translate'' of an intersecting set containing the identity. The following corollary shows that, as far as the EKR-module property is concerned, we can restrict ourselves to maximum intersecting sets containing the identity.

\begin{cor}\label{idekr}
Let $G$ be a finite group with the identity $1_{G}$, $H<G$, and $\Omega=[G:H]$. 
Let $C=\{\chi \in \Irr(G): \ \chi(\ch_{H})=0\}$, and 
$$\mf{S}_{0}=\{\I_{0}\ :\ \I_{0}\  \text{is a maximum intersecting set with $1_{G}\in \I_{0}$}\}.$$
Then $G$ on $\Ome$ satisfies the EKR-module property if and only if $\chi(\ch_{\I_{0}}) =0$ for any $\I_{0} \in \mf{S}_{0}$ and any $\chi \in C$.
\end{cor}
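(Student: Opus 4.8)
The plan is to deduce Corollary~\ref{idekr} directly from Theorem~\ref{idekrm}, using the observation already recorded in the preamble: every maximum intersecting set is a translate of one containing the identity. So the entire content is to show that the two families of character-sum conditions---one ranging over all of $\mf{S}$, the other only over $\mf{S}_{0}$---are equivalent. Since $\mf{S}_{0}\subseteq \mf{S}$, one implication is immediate: if $\chi(\ch_{\I})=0$ for all $\I\in\mf{S}$ and all $\chi\in C$, then in particular it holds for all $\I_{0}\in\mf{S}_{0}$. By Theorem~\ref{idekrm}, the forward direction (EKR-module implies the vanishing over $\mf{S}$, hence over $\mf{S}_{0}$) is then automatic.

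For the substantive direction I would assume that $\chi(\ch_{\I_{0}})=0$ for every $\I_{0}\in\mf{S}_{0}$ and every $\chi\in C$, and aim to verify the hypothesis of Theorem~\ref{idekrm}, namely that $\chi(\ch_{\I})=0$ for \emph{every} maximum intersecting set $\I$ and every $\chi\in C$. The key step is the translation trick: given an arbitrary $\I\in\mf{S}$, pick any $t\in\I$ and set $\I_{0}:=\I t^{-1}$, which lies in $\mf{S}_{0}$ since it is a maximum intersecting set containing $1_{G}=tt^{-1}$. The link between $\chi(\ch_{\I})$ and $\chi(\ch_{\I_{0}})$ comes from the computation already performed in the proof of Lemma~\ref{caniddes}, which gives
\[
\frac{|G|}{\chi(1)}e_{\chi}\ch_{S}=\sum_{g\in G}\chi(\ch_{S g^{-1}})g
\]
for any subset $S\subset G$. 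Applying this with $S=\I$, the vanishing of $\chi(\ch_{\I_{0}})=\chi(\ch_{\I t^{-1}})$ is exactly the statement that the coefficient of $t$ in $\frac{|G|}{\chi(1)}e_{\chi}\ch_{\I}$ is zero. Running $t$ over all of $G$ (every translate $\I t^{-1}$ is again a maximum intersecting set containing the identity, hence lies in $\mf{S}_{0}$) forces $e_{\chi}\ch_{\I}=0$, and taking the trace of the corresponding unitary representation yields $\chi(\ch_{\I})=0$.

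I expect the only point requiring care---rather than a genuine obstacle---to be making sure the translate argument covers \emph{all} coefficients, i.e.\ that for fixed $\chi\in C$ and fixed maximum intersecting $\I$, every group element $t$ arises as a valid translation parameter with $\I t^{-1}\in\mf{S}_{0}$. Since $\I t^{-1}$ is a maximum intersecting set for every $t\in G$ (right translation preserves both the intersecting property and the cardinality), and contains $1_{G}$ precisely when $t\in\I$, I must phrase the conclusion via the displayed identity rather than by naively translating by elements of $\I$ alone; reading off that $e_{\chi}\ch_{\I}=0$ from the vanishing of all the relevant coefficients is then the clean way to finish. With $e_{\chi}\ch_{\I}=0$ established for all $\chi\in C$, the hypothesis of Theorem~\ref{idekrm} is met, so $G$ on $\Omega$ satisfies the EKR-module property, completing the equivalence.
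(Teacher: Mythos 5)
Your easy direction is fine, and you have correctly located the key identity $\frac{|G|}{\chi(1)}e_{\chi}\ch_{\I}=\sum_{g\in G}\chi(\ch_{\I g^{-1}})g$, but the substantive direction has a genuine gap. The hypothesis only controls $\chi(\ch_{\I t^{-1}})$ for $t\in\I$, because $\I t^{-1}$ contains $1_{G}$ precisely when $t\in\I$; so all you know is that $|\I|$ of the $|G|$ coefficients of $e_{\chi}\ch_{\I}$ vanish. The parenthetical in your second paragraph (``every translate $\I t^{-1}$ \dots\ lies in $\mf{S}_{0}$'') is false for $t\notin\I$, and your third paragraph, while correctly noticing this, does not repair it: ``reading off that $e_{\chi}\ch_{\I}=0$ from the vanishing of all the relevant coefficients'' is not a valid deduction, since an element of $\bb{C}G$ can have many vanishing coefficients without being zero, and nothing in your argument rules out nonzero coefficients at $t\notin\I$.

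The missing idea is a positivity argument. Take a \emph{unitary} representation $\Theta$ affording $\chi$ and set $M_{\I}=\sum_{s\in\I}\Theta(s)$; then $M_{\I}M_{\I}^{\dagger}=M_{\I}M_{\I^{-1}}=\sum_{t\in\I}M_{\I t^{-1}}$, so
\[
\sum_{t\in\I}\chi(\ch_{\I t^{-1}})=\mrm{Tr}\bigl(M_{\I}M_{\I}^{\dagger}\bigr),
\]
and the right-hand side equals the sum of the squared absolute values of the entries of $M_{\I}$. The vanishing of exactly the $|\I|$ coefficients you do control therefore forces $M_{\I}=0$, hence $\chi(\ch_{\I})=\mrm{Tr}(M_{\I})=0$, which is what Theorem~\ref{idekrm} requires. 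This is precisely how the paper closes the argument; without this step (or some equivalent exploitation of the positive semidefiniteness of $M_{\I}M_{\I}^{\dagger}$), the passage from ``the coefficients at $t\in\I$ vanish'' to ``$\chi(\ch_{\I})=0$'' is unjustified.
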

\begin{proof}

At first, we assume that $\chi(\ch_{\I_{0}})=0$, for all $\I_{0} \in \mf{S}$ and $\chi \in C$.
Fix a $\chi \in C$ and a maximum intersecting set $\I$ . Let $P:G \to \mrm{GL}_{n}(\bb{C})$ be a unitary representation affording $\chi$ as its character. Given a set $X \subset G$, define $M_{X}:=\sum\limits_{x \in X} P(x)$. We observe that $M_{\I}M_{\I^{-1}}=\sum\limits_{t\in \I}M_{\I t^{-1}}$. 
Then $Tr(M_{\I t^{-1}})=\chi(\ch_{\I t^{-1}})$. As $P$ is a unitary representation, $M_{\I^{-1}}$ is the conjugate transpose of $M_{\I}$, and thus
\begin{equation}\label{unitary}
Tr\left(M_{\I}M_{\I}^{\dagger}\right)=\sum\limits_{t\in \I}\chi(\ch_{\I t^{-1}}).
\end{equation}
For any $t\in \I$, the set $\I t^{-1}$ is a maximum intersecting set containing the identity $1_{G}$. Therefore, we have $\chi(\ch_{\I t^{-1}})=\chi(\ch_{H})=0$, for all $t \in \I$. Thus by \eqref{unitary}, we have $Tr(M_{\I}M_{\I}^{\dagger})=0$. As $M_{\I}^{\dagger}$ is the conjugate transpose of $M_{\I}$, the matrix $M_{\I}M_{\I}^{\dagger}$ is a diagonal matrix whose entries are the norms of rows of $M_{\I}$. Thus,
$Tr(M_{\I}M_{\I}^{\dagger})=0$ implies that $M_{\I}=0$. We can now conclude that $\chi(\ch_{\I})=Tr(M_{\I})=0$. By Theorem~\ref{idekrm}, the action of $G$ on $\Omega$ satisfies the EKR-module property.

The other direction follows directly from Theorem~\ref{idekrm}.
\end{proof}

\section{EKR-module property and Spectral graph theory.}\label{prelim2}

Results from spectral graph theory have proved useful in characterizing maximum intersecting sets in some permutation groups (for instance, see \cite{MSi2019}, \cite{MS2011}, \cite{MST2016}). 
Let $G$ be a group acting on $\Omega=[G:H]$, for some $H \leq G$.
An element $g\in G$ is called a derangement if it does not fix any point in $\Omega$.  
Let $Der(G,\Omega)$ denote the set of derangements in $G$. 
It is easy to see that $Der(G,\Omega)=G \setminus \bigcup\limits_{g \in G} gHg^{-1}$. By $\Gamma_{G,\Omega}$, we denote the Cayley graph on $G$, with $Der(G, \Omega)$ as the ``connection set''. We now observe that intersecting sets in $G$ are the same as independent sets/co-cliques in $\Gamma_{G,\Omega}$. This observation enables us to use some popular spectral bounds on sizes of independent sets in regular graphs. Before describing these, we recall some standard definitions.

For graph $X$ on $n$ vertices, a real symmetric matrix $M$ whose rows and columns are indexed by the vertex set of $X$, is said to be \emph{compatible} with $X$, if $M_{u,v}=0$ whenever $u$ is not adjacent to $v$ in $X$. Clearly, the adjacency matrix of $X$ is compatible with $X$. 
Given a subset $S$ of the vertex set, by $\ch_{S}$, we denote the characteristic vector of $S$. 
We now state the following famous result which is referred to as either the Delsarte-Hoffman bound or the ratio bound.

\begin{lemma}(\cite[Theorem 2.4.2]{GMbook})\label{wrb}
Let $M$ be a real symmetric matrix with constant row sum $d$, which is compatible with a graph $X$ on $n$ vertices. If the least eigenvalue of $M$ is $\tau$, then for any independent set $S$ in $X$,
$$|S| \leq \frac{n(-\tau)}{d-\tau},$$ and if equality holds, then
$$\ch_{S}-\frac{|S|}{n}\ch_{X}$$ is a $\tau$-eigenvector for $M$.
\end{lemma}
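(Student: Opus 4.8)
The final statement is the Delsarte–Hoffman (ratio) bound, Lemma~\ref{wrb}. Let me sketch a proof.

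The plan is to use the spectral decomposition of the real symmetric matrix $M$ and exploit the fact that the all-ones vector $\ch_X$ is an eigenvector. First I would observe that since $M$ has constant row sum $d$, the all-ones vector $\mathbf{1}=\ch_X$ satisfies $M\mathbf{1}=d\mathbf{1}$, so $d$ is an eigenvalue with eigenvector $\mathbf{1}$. Because $M$ is real symmetric, it is orthogonally diagonalizable, and I can decompose any vector into a component along $\mathbf{1}$ and a component orthogonal to $\mathbf{1}$. The key quantity is the quadratic form $\ch_S^{\top} M \ch_S$: since $S$ is independent in $X$ and $M$ is compatible with $X$, every off-diagonal entry $M_{u,v}$ contributing to this form vanishes (as $u,v\in S$ with $u\neq v$ are non-adjacent), and the diagonal contributes $\sum_{u\in S}M_{u,u}$; after checking that compatibility together with constant row sum forces the relevant diagonal behaviour, one finds $\ch_S^{\top}M\ch_S = \sum_{u \in S} M_{u,u}$, which I would bound appropriately.

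The heart of the argument is the following two-sided estimate. Write $\ch_S = \frac{|S|}{n}\mathbf{1} + w$, where $w \perp \mathbf{1}$, so that $\|w\|^2 = \|\ch_S\|^2 - \frac{|S|^2}{n} = |S| - \frac{|S|^2}{n}$. On the orthogonal complement of $\mathbf{1}$, all eigenvalues of $M$ are at least $\tau$, the least eigenvalue, so $w^{\top} M w \geq \tau \|w\|^2$. Expanding,
\[
\ch_S^{\top} M \ch_S = \frac{|S|^2}{n^2}\,\mathbf{1}^{\top}M\mathbf{1} + w^{\top} M w = \frac{|S|^2 d}{n} + w^{\top} M w \geq \frac{|S|^2 d}{n} + \tau\Bigl(|S| - \frac{|S|^2}{n}\Bigr).
\]
Combining this lower bound with the upper bound $\ch_S^{\top} M \ch_S \leq 0$ coming from independence (for the adjacency matrix the form is exactly $0$; in general one normalizes so the diagonal vanishes), I would rearrange the inequality $\frac{|S|^2 d}{n} + \tau|S| - \frac{\tau|S|^2}{n} \leq 0$. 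Dividing by $|S|/n>0$ and collecting terms yields $|S|(d-\tau) \leq -n\tau$, which is precisely $|S|\leq \frac{n(-\tau)}{d-\tau}$.

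For the equality case, I would trace back through the single inequality used, namely $w^{\top} M w \geq \tau \|w\|^2$. Equality there forces $w$ to lie entirely in the $\tau$-eigenspace of $M$, so $Mw = \tau w$. Since $w = \ch_S - \frac{|S|}{n}\ch_X$, this says exactly that $\ch_S - \frac{|S|}{n}\ch_X$ is a $\tau$-eigenvector, as claimed. The main obstacle I anticipate is bookkeeping around the diagonal of $M$: the cleanest formulation assumes the compatible matrix is suitably normalized (e.g.\ zero diagonal, as for an adjacency matrix), and I would need to state precisely which normalization makes $\ch_S^{\top}M\ch_S \leq 0$ hold, or alternatively absorb a constant-diagonal shift $M \mapsto M - cI$ without changing either the eigenvector conclusion or the ratio $\frac{-\tau}{d-\tau}$ after the corresponding shift in $d$ and $\tau$. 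Since this is a cited, standard result (\cite[Theorem 2.4.2]{GMbook}), I would keep the argument brief and refer to the reference for the normalization details.
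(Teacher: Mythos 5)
Your proof is correct, and in fact the paper offers no proof of this statement at all: it is quoted verbatim as a cited result (\cite[Theorem 2.4.2]{GMbook}), so there is nothing internal to compare against. Your argument is the standard Rayleigh-quotient proof of the ratio bound: split $\ch_S=\frac{|S|}{n}\mathbf{1}+w$ with $w\perp\mathbf{1}$, use $M\mathbf{1}=d\mathbf{1}$ to kill the cross terms and evaluate $\mathbf{1}^{\top}M\mathbf{1}=dn$, bound $w^{\top}Mw\geq\tau\|w\|^2$, and compare against $\ch_S^{\top}M\ch_S$. The one point you flag as a potential obstacle --- the diagonal of $M$ --- is a non-issue under the paper's own definition of compatibility: since a vertex of a simple graph is not adjacent to itself, $M_{u,u}=0$ is forced, and more generally $M_{u,v}=0$ for \emph{all} pairs $u,v\in S$ when $S$ is independent, so $\ch_S^{\top}M\ch_S=0$ exactly and no normalization or diagonal shift is needed. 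Two minor loose ends worth a sentence each if you wrote this out in full: the final division implicitly assumes $d>\tau$ (true unless $\mathbf{1}$ itself is a least eigenvector, a degenerate case in which the stated bound is vacuous or meaningless), and in the equality case you should note the standard fact that for a symmetric matrix $w^{\top}Mw=\tau\|w\|^2$ with $\tau$ the minimum eigenvalue forces $Mw=\tau w$ (and that $w\neq 0$ for a proper nonempty $S$). Neither affects the substance; the proposal is complete.
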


The application of the above lemma on clever choices of $\Gamma_{G,\Omega}$-compatible matrices, proved useful in characterization of maximum intersecting sets for many permutation groups (for instance see \cite{MS2011} and \cite{MST2016}). We will now describe these in detail.

\begin{definition}
{\rm
Let $G$ be a group acting transitively on a set $\Omega$. A $(G,\Omega)$-compatible class function is a real valued class function $f: G \to \bb{R}$ such that: (i) $f(g)=0$ for all $g \notin Der(G,\Omega)$; and (ii) $f(d)=f(d^{-1})$ for all $d\in Der(G, \Omega)$.
}
\end{definition}

Let $f$ be a $(G,\Omega)$-compatible class function . Consider the matrix $M^{f}$ indexed by $G\times G$, that satisfies $M^{f}_{g,h}=f(gh^{-1})$ for all $(g,h)\in G \times G$. Clearly $M^{f}$ is a $\Gamma_{G,\Omega}$-compatible matrix. We now describe the spectra of such matrices.
The description of spectra of matrices of the form $M^{f}$ is a special case of well-know results by Babai (\cite{Babai}) and Diaconis-Shahshahani (\cite{DS1981}). The following lemma, which is a special case of Lemma~5 of \cite{DS1981}, describes the spectra of matrices of the form $M^{f}$.

\begin{lemma}{\rm(Babai, Diaconis-Shahshahani)}\label{spec}
Let $G$ be a permutation group on $\Omega$, with $Der(G,\Ome) \subset G$ being the set of derangements. Let $f: G \to \bb{R}$ be $(G,\Omega)$-compatible class function. Define $M^{f} \in \bb{C}^{G\times G}$ to be the matrix satisfying $M^{f}_{g,\ h}=f(g^{-1}h)$, for all $g,h \in G$. Then $M^{f}$ is a $\Gamma_{G,\Omega}=Cay(G,Der(G,\Ome))$-compatible matrix with spectrum
$\mrm{Spec}(M^{f}):=\{\lambda_{\chi,f}\ :\ \chi \in \Irr(G)\}$, where $$\lambda_{\chi,f}=\frac{1}{\chi(1)}\sum\limits_{g \in G}f(g)\chi(g).$$ Given $\nu \in \mrm{Spec}(M^{f})$, the $\nu$-eigenspace in $\bb{C}{G}$ is the two-sided ideal
$$\sum\limits_{\{\chi \ :\ \chi\in \Irr(G)\ \text{and}\ \lambda_{\chi,f}=\nu\}} \left\langle e_{\chi}\right\rangle.$$
\end{lemma}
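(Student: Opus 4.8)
The plan is to realize the matrix $M^{f}$ as the operator of multiplication by a single central element of the group algebra, and then read off its spectrum from the Artin--Wedderburn decomposition $\bb{C}G=\bigoplus_{\chi\in\Irr(G)}\langle e_{\chi}\rangle$ recalled above. Set $z_{f}:=\sum_{g\in G}f(g)\,g\in\bb{C}G$, and identify $\bb{C}^{G}$ with $\bb{C}G$ by sending the standard basis vector indexed by $h$ to the group element $h$. Under this identification I will check that $M^{f}$ is precisely the operator $v\mapsto v\,z_{f}$ of right multiplication by $z_{f}$.

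First I would verify the identification by a direct computation: the coefficient of $g$ in $h\,z_{f}=\sum_{k\in G}f(k)\,hk$ is $f(h^{-1}g)$, while the $(g,h)$-entry of $M^{f}$ is $f(g^{-1}h)$. Because $f$ is a class function satisfying $f(d)=f(d^{-1})$, we have $f(g^{-1}h)=f((g^{-1}h)^{-1})=f(h^{-1}g)$, so the two operators agree; the same symmetry shows that $M^{f}$ is a real symmetric matrix, so all its eigenvalues are real. Moreover, since $f$ is a class function, $z_{f}$ lies in the centre $Z(\bb{C}G)$, so left and right multiplication by $z_{f}$ coincide, and each minimal two-sided ideal $\langle e_{\chi}\rangle$ is invariant under this operator.

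Next I would compute the action of $z_{f}$ on each block. As $z_{f}$ is central, Schur's lemma forces it to act as a scalar $\omega_{\chi}$ on the irreducible right module affording $\chi$; taking traces gives $\chi(1)\,\omega_{\chi}=\sum_{g\in G}f(g)\chi(g)$, whence $\omega_{\chi}=\lambda_{\chi,f}$ with $\lambda_{\chi,f}=\frac{1}{\chi(1)}\sum_{g\in G}f(g)\chi(g)$. Equivalently, $z_{f}e_{\chi}=\lambda_{\chi,f}e_{\chi}$, and since $e_{\chi}$ is the identity of the subalgebra $\langle e_{\chi}\rangle$, every $x\in\langle e_{\chi}\rangle$ satisfies $x\,z_{f}=z_{f}x=z_{f}e_{\chi}x=\lambda_{\chi,f}x$. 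Thus multiplication by $z_{f}$ acts on $\langle e_{\chi}\rangle$ as the scalar $\lambda_{\chi,f}$.

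Finally, summing over the decomposition $\bb{C}G=\bigoplus_{\chi}\langle e_{\chi}\rangle$, the operator $M^{f}$ is scalar on each summand, so its set of eigenvalues is $\{\lambda_{\chi,f}:\chi\in\Irr(G)\}$ and the $\nu$-eigenspace is $\sum_{\{\chi\,:\,\lambda_{\chi,f}=\nu\}}\langle e_{\chi}\rangle$, which is a two-sided ideal by Lemma~\ref{ides}. The $\Gamma_{G,\Omega}$-compatibility of $M^{f}$ is immediate from condition (i) in the definition of a $(G,\Omega)$-compatible class function. I expect the only genuinely delicate point to be the bookkeeping in the first two steps, namely matching the convention $M^{f}_{g,h}=f(g^{-1}h)$ to multiplication by $z_{f}$ and invoking both the class-function and the symmetry hypotheses at exactly the right places; once $M^{f}$ is identified with the central operator $z_{f}$, the eigenvalue formula and the ideal description of the eigenspaces follow formally from the idempotent relations \eqref{oidem}.
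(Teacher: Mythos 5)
Your proof is correct. The paper does not actually prove this lemma---it cites it as a special case of Lemma~5 of Diaconis--Shahshahani---so there is no in-paper argument to compare against; your route (identifying $M^{f}$ with multiplication by the central element $z_{f}=\sum_{g\in G}f(g)\,g$, which by Schur's lemma acts on each Wedderburn block $\left\langle e_{\chi}\right\rangle$ as the scalar $\lambda_{\chi,f}$) is precisely the standard argument underlying the cited result, and you correctly use the symmetry $f(x)=f(x^{-1})$ both to reconcile the indexing convention with right multiplication by $z_{f}$ and to see that $M^{f}$ is real symmetric and $\Gamma_{G,\Omega}$-compatible.
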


We are now ready to give a sufficient condition for EKR-module property in terms of spectra of $\Gamma_{G,\Omega}$-compatible matrices. 
Let $f: G \to \bb{R}$ be a $(G, \Omega)$-compatible class function. 
Then the row sum of $M^{f}$ is $r_{f}:=\sum\limits_{g\in G}f(g)$. Let $\tau$ be the least eigenvalue of $M^{f}$. 
By Lemma~\ref{wrb}, for any intersecting set $S$, we have 
$$|S|\leqslant \frac{|G|(-\tau)}{r_{f}-\tau}.$$ 
Let us assume that equality holds for some intersecting set $\I$.
By Lemmas~\ref{wrb} and \ref{spec}, if $\I$ is an maximum intersecting set, then
$\ch_{\I}$ is in the $2$-sided ideal
$$\left\langle e_{1} \right\rangle +\sum\limits_{\{\chi\ :\ \chi\in \Irr(G)\ \text{and}\ \lambda_{\chi,f}=\tau\}} \left\langle e_{\chi}\right\rangle.$$ 

Now by application of Lemma~\ref{caniddes}, we obtain the following sufficient condition for EKR-module property.

\begin{theorem}\label{Specsuff}
Let $G$ be a group acting on the set $\Omega$ of left cosets of a subgroup $H$. Assume that there is an intersecting set $S$ and a $(G,\Omega)$-compatible class function $f:G \to \bb{R}$ such that $|S|=\dfrac{|G|(-\tau)}{d-\tau}$, where $d=\sum\limits_{g \in G} f(g)$ and $\tau$ is the least eigenvalue of $M^{f}$.
Then

(a) $|S|$ is the size of a maximum intersecting set in $G$; and

(b) the action of $G$ on $\Omega$ satisfies the EKR-module property if
$$\left\{\chi \in \Irr(G)\ :\ \frac{1}{\chi(1)}\sum\limits_{g \in G}f(g)\chi(g)=\tau\right\}\subseteq \left\{\chi \in \Irr(G)\ :\ \chi(\ch_{H}) \neq 0\right\}.$$

\end{theorem}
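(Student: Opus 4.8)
The plan is to read off both assertions from three ingredients already in hand: the ratio bound (Lemma~\ref{wrb}), the spectral decomposition of $M^{f}$ (Lemma~\ref{spec}), and the description of $\mf{I}_{G}(\Omega)$ as a sum of minimal ideals (Lemma~\ref{caniddes}). For part (a), I would note that $M^{f}$ is a $\Gamma_{G,\Omega}$-compatible matrix with constant row sum $d$ and least eigenvalue $\tau$, so Lemma~\ref{wrb} applies to every independent set of $\Gamma_{G,\Omega}$, i.e.\ to every intersecting set of $G$, giving $|T|\leqslant |G|(-\tau)/(d-\tau)$ for all such $T$. Since $S$ attains this value by hypothesis, no intersecting set can be strictly larger, and hence $|S|$ is the size of a maximum intersecting set.

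For part (b) I would fix an arbitrary maximum intersecting set $\I$. By part (a) it has size exactly $|G|(-\tau)/(d-\tau)$, so it meets equality in Lemma~\ref{wrb}; the equality clause then tells me that $\ch_{\I}-\frac{|\I|}{|G|}\ch_{G}$ is a $\tau$-eigenvector of $M^{f}$, where $\ch_{G}=\sum_{g\in G}g$. Feeding this into Lemma~\ref{spec}, whose $\tau$-eigenspace is the two-sided ideal $V_{\tau}:=\sum_{\{\chi:\,\lambda_{\chi,f}=\tau\}}\langle e_{\chi}\rangle$, I get $\ch_{\I}-\frac{|\I|}{|G|}\ch_{G}\in V_{\tau}$. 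Because $\ch_{G}=|G|\,e_{1}\in\langle e_{1}\rangle$, this rearranges to $\ch_{\I}\in\langle e_{1}\rangle+V_{\tau}$, which is exactly the two-sided ideal singled out in the discussion preceding the theorem.

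The final step is to contain $\langle e_{1}\rangle+V_{\tau}$ inside $\mf{I}_{G}(\Omega)$. Lemma~\ref{caniddes} identifies $\mf{I}_{G}(\Omega)=\bigoplus_{\chi\in Y_{H}}\langle e_{\chi}\rangle$ with $Y_{H}=\{\chi:\chi(\ch_{H})\neq 0\}$. The hypothesis of (b) is precisely $\{\chi:\lambda_{\chi,f}=\tau\}\subseteq Y_{H}$, so $V_{\tau}\subseteq\mf{I}_{G}(\Omega)$; and since the trivial character $\mathbf{1}$ has $\mathbf{1}(\ch_{H})=|H|\neq 0$, we also have $\mathbf{1}\in Y_{H}$ and hence $\langle e_{1}\rangle\subseteq\mf{I}_{G}(\Omega)$. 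Therefore $\ch_{\I}\in\mf{I}_{G}(\Omega)$, and as $\I$ was arbitrary this is by definition the EKR-module property (equivalently, one can close by invoking Theorem~\ref{idekrm}).

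The one genuinely delicate point, where I would concentrate care, will be the passage from the equality case of Lemma~\ref{wrb} to ideal membership: recognizing that $\ch_{G}$ spans $\langle e_{1}\rangle$, and that Lemma~\ref{spec} pins the $\tau$-eigenvector into the specific sum $\sum_{\lambda_{\chi,f}=\tau}\langle e_{\chi}\rangle$ of idempotent ideals rather than into some larger eigenspace. The remaining checks — that intersecting sets are exactly the co-cliques of $\Gamma_{G,\Omega}$, and the automatic membership $\mathbf{1}\in Y_{H}$ that guarantees $\langle e_{1}\rangle\subseteq\mf{I}_{G}(\Omega)$ — are routine bookkeeping.
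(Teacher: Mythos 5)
Your proposal is correct and follows essentially the same route as the paper: the ratio bound of Lemma~\ref{wrb} applied to $M^{f}$ gives part (a), the equality clause together with Lemma~\ref{spec} places $\ch_{\I}$ in $\left\langle e_{1}\right\rangle+\sum_{\lambda_{\chi,f}=\tau}\left\langle e_{\chi}\right\rangle$, and Lemma~\ref{caniddes} with the containment hypothesis puts that ideal inside $\mf{I}_{G}(\Omega)$. If anything, you spell out two details the paper leaves implicit (that $\ch_{G}=|G|e_{1}$ spans $\left\langle e_{1}\right\rangle$ and that the trivial character lies in $Y_{H}$), so no gap remains.
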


At this point, we remark that the proofs of EKR (\cite{MST2016}) and EKR-module properties (\cite{MSi2019}) of $2$-transitive groups, involved finding a class function that satisfies the conditions of Theorem~\ref{Specsuff}.

\section{Proof of Theorem~\ref{rega}}\label{regular}

In this section, we prove Theorem~\ref{rega}.
By Corollary~\ref{permred}, we can restrict ourselves to permutation groups that contain a regular normal subgroup. Let $A$ be a finite group and $H \leqslant Aut(A)$. 
We consider the permutation action of $G:= A \rtimes H$ on $A$, defined by $(a,\sigma)\cdot b=a\sigma(b)$, for all $a,b\in A$ and $\sigma \in H$. 
It is well-known that any permutation group with a regular normal subgroup, is of the form $G\leqslant \mrm{Sym}(A)$. 
By \cite[Corollary 2.2]{AM2014}, permutation groups which contain a regular subgroup, satisfy the EKR property. Thus the action of $G$ on $A$ satisfies the EKR property.

Before starting the proof, we prove an elementary result that we will use later. 
Every element of $G$ is of the form $(a, \sigma)$, where $a\in A$ and $\sigma \in H$. 
Note that $(a,\sigma)(b,\pi)= (a\sigma(b), \sigma\pi)$. 
We need the following well-known result for technical reasons.

\begin{lemma}\label{semidirect}
Consider $g=(a,\sigma) \in G$, with $a\in A$ and $\sigma \in H$. If $g$ fixes a point then

(i) $g$ is conjugate to $\sigma$ via an element of $A$; and

(ii) $\sigma$ is the unique $A$-conjugate of $g$ in $H$.
\end{lemma}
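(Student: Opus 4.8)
The plan is to work directly inside the semidirect product, identifying $A$ with the subgroup $\{(c,1):c\in A\}\leqslant G$ and $H$ with $\{(1,\pi):\pi\in H\}\leqslant G$, and to exploit the multiplication rule $(a,\sigma)(b,\pi)=(a\sigma(b),\sigma\pi)$ recorded above. Everything reduces to one explicit conjugation computation.

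The first step is to compute the conjugate of $g=(a,\sigma)$ by an arbitrary element of $A$. Using $(c,1)^{-1}=(c^{-1},1)$, a one-line calculation gives
\[(c,1)\,(a,\sigma)\,(c,1)^{-1}=(ca\sigma(c^{-1}),\,\sigma)\qquad(c\in A).\]
Two features of this formula drive the whole argument: the $H$-component of every $A$-conjugate of $g$ is again $\sigma$, and the $A$-component $ca\sigma(c^{-1})$ can be made trivial by a suitable choice of $c$.

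For part (i) I would bring in the fixed-point hypothesis. By definition of the action, $g$ fixes $b\in A$ exactly when $a\sigma(b)=b$, that is, $a=b\sigma(b^{-1})$. Matching this against the requirement $ca\sigma(c^{-1})=1$, equivalently $a=c^{-1}\sigma(c)$, I take $c=b^{-1}$, whereupon $a=c^{-1}\sigma(c)$ becomes precisely the fixed-point relation. Hence $(b^{-1},1)(a,\sigma)(b^{-1},1)^{-1}=(1,\sigma)=\sigma$, proving (i). Part (ii) is then immediate from the conjugation formula: every $A$-conjugate of $g$ retains $H$-component $\sigma$, so the only element of $H$ that can arise as an $A$-conjugate of $g$ is the one with trivial $A$-component, namely $\sigma$ itself, and (i) shows it is indeed attained.

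The only step needing any insight is reading off the conjugator $c=b^{-1}$ from the fixed-point equation; once the conjugation formula is in hand, both claims are mechanical consequences of the multiplication rule, so I do not anticipate a genuine obstacle.
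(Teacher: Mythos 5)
Your proof is correct and follows essentially the same route as the paper: both compute the $A$-conjugate of $(a,\sigma)$ explicitly, observe that the $H$-component is preserved (which gives uniqueness), and read off the conjugator from the fixed-point relation $a=b\sigma(b^{-1})$ (the paper conjugates by $b$ on the other side, you by $b^{-1}$, which is the same thing). No gaps.
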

\begin{proof}
For convenience, given $x\in A$, we identify $(x,\ 1_{H}) \in G$ with $x \in A$.
Given any $b \in A$, we have $(a,\sigma)\cdot b=a\sigma(b)$. 
So $(a,\sigma)$ fixes $b$ if and only if $a=b\sigma(b^{-1})$.

Now for $c\in A$, we have $c^{-1}(b\sigma(b^{-1}),\sigma)(c)=(c^{-1}b\sigma(bc^{-1}),\sigma)$. 
Thus$(c^{-1}b\sigma(bc^{-1}),\sigma) \in H$ if and only if $c^{-1}b\sigma(bc^{-1})=1_{G}$. 
Then the proof follows from setting $c=b$.
\end{proof}

We will now prove the theorem by using Corollary~\ref{idekr}.
Let $\I_{0}$ be any maximum intersecting set with $1_{G} \in \I_{0}$. 
As $\I_{0}$ is an intersecting set, for all $s \in \I_{0}$, the element $s=s1^{-1}_{G}$ fixes some point. 
Thus by Lemma~\ref{semidirect}, given $s\in \I_{0}$, there exists a unique element $\sigma_{s}\in H$ and an element $a_{s} \in A$, such that $a_{s}^{-1}s a_{s}=\sigma_{s}\in H$. 
We now claim that $\{\sigma_{s}\ :\ s\in \I_{0}\}=H$.
Since $G$ satisfies the EKR property, we have $|\I_{0}|=|H|$. 
Therefore, $\{\sigma_{s}:\ s\in \I_{0}\}=H$ is equivalent to injectivity of the map $s \mapsto \sigma_{s}$.

Suppose that for some $s,r \in \I_{0}$, we have $\sigma_{s}=\sigma_{r}$. Then, we have \[sr^{-1}=a_{s}\sigma_{s}a_{s}^{-1}a_{r}\sigma_{r}^{-1}a_{r}^{-1}= a_{s}\sigma_{s}(a_{s}^{-1}a_{r})\sigma_{s}^{-1}a_{r}^{-1}\in A.\]
As $\I_{0}$ is an intersecting set, $sr^{-1}\in A$ fixes a point. Since $A$ acts regularly, we must have $sr^{-1}=1$. 
Thus $s \mapsto \sigma_{s}$ is injective, and $\{\sigma_{s}:\ s\in \I_{0}\}=H$. 
As $s \in \I_{0}$ is conjugate to $\sigma_{s}$, for any $\psi \in \Irr(G)$, we have $\psi(\ch_{\I_{0}})=\sum\limits_{s\in \I_{0}} \psi(\sigma_{s})=\psi(\ch_{H})$.
Therefore, if $\chi \in \{\psi:\ \psi\in \Irr(G)\ \&\ \psi(\ch_{H})=0\}$ and $\I_{0}$ is a maximum intersecting set containing $1_{G}$, we have $\chi(\ch_{\I_{0}})=0$. 
Now, by Corollary~\ref{idekr}, Theorem~\ref{rega} is proved.
\qed

\section{EKR-module property for primitive rank 3 group actions.}\label{rank3}

In this section, we study the EKR-module property for primitive permutation groups of rank 3, and prove Theorem~\ref{Rank3}.
Let $G$ be a primitive permutation group on $\Ome$ of rank 3.
To prove Theorem~\ref{Rank3}, we may assume that $G$ is not an almost simple group.
Then either
\begin{enumerate}[(a)]
\item $G$ is affine, so that $G$ has a regular normal subgroup, or
\item $G$ is in product action, and $G\leqslant T \wr S_{2}$ on $\Omega^2$, where $T \leqslant \Sym(\Omega)$ is $2$-transitive.
\end{enumerate}

If $G$ is affine, then $G$ indeed has the EKR-module property by Theorem~\ref{rega}.
We thus assume further that $G$ is in product action in the rest of this section.

Let $T \leqslant \Sym(\Omega)$ be a $2$-transitive group, and let $H=T_{\omega} < T$, where $\omega \in \Omega$.
Let $G= T \wr S_{2}$, and $M=H \wr S_{2}$.
Then $G$ naturally acts on $\Omega^2$, with $M=G_{(\omega, \omega)}$. 
Obviously, 
\[\mbox{$\{(\omega,\ \omega)\}$, $\left(\Omega\setminus \{\omega\}\right) \times \left(\Omega\setminus \{\omega\}\right)$, and $ \left(\left(\Omega\setminus \{\omega\}\right) \times \{\omega\} \right) \bigcup \left(\{\omega\} \times \left(\Omega\setminus \{\omega\}\right) \right)$}\] 
are the orbits of $M=G_{(\omega, \omega)}$ on $\Omega^{2}$. 
Thus $G$ is of rank $3$. 

In view of Corollary~\ref{idekr}, it is beneficial to obtain descriptions of the set $\Irr(G)$ of irreducible characters of $G$, and of the maximum intersecting sets in $G$ containing the identity. As one would expect, the $2$-transitive action $T$ on $\Omega$ plays a major role.  Before going any further, we establish some notation. In $G=T \wr S_2=(T\times T)\rtimes S_2$, by $\pi$, we denote the unique $2$-cycle in  $S_{2}$. Elements of $G\setminus (T\times T)$ are of the form $(s,r) \pi$, where $s,r \in T$. By $(s,r)\pi$, we denote the product of elements $(s,r)$ and $\pi$ of $G$.

We start by describing $\Irr(G)$. The subgroup $N:=T\times T$ of $G$ is a normal subgroup of index $2$. By Clifford theory (\cite[6.19]{isaacs1994character}), restriction of any irreducible character $\nu \in \Irr(G)$ to $N$ is either an irreducible $G$-invariant character of $N$, or the sum of two $G$-conjugate irreducible characters of $N$. From well-known results on characters of direct products, we have
$$\Irr(N)=\{\chi \times \lambda \ : \chi,\lambda\in \ \Irr(T)\}.$$ Let $\chi, \lambda$ be two distinct irreducible characters of $T$, then the inertia subgroup in $G$ of $\chi \times \lambda $ is $N$, and therefore $\sigma_{\chi,\lambda }:=Ind^{G}_{N} (\chi \times \lambda)$ is an irreducible character of $G$, with $Res^{G}_{N}(\sigma_{\chi,\lambda })=\chi \times \lambda + \lambda \times \chi$. Now consider an irreducible character of $N$, of the form $\chi\times \chi$. Let $P : T \to \mrm{GL}(V)$ be a representation affording $\chi$ as its character. Then $P\otimes P : N \to \mrm{GL}(V \otimes V)$ is a representation of $N$ that affords $\chi \times \chi$ as its character. Let $\pi$ be the unique $2$-cycle in $S_{2}$.
Define $\Psi : G \to \mrm{GL}(V\otimes V)$ to be the representation such that $\Psi\lvert_{N}=P\otimes P$ and $\Psi(\pi)(v \otimes w)= w\otimes v$ for all $v,w \in V$. The character $\rho_{\chi}$ afforded by $\Psi$ is an irreducible character of $G$ that extends $\chi\times \chi$. We also have $\rho_{\chi}((s,r)\pi))=\chi(rs)$ for all $r,s \in T$. By a result of Gallagher (\cite[6.17]{isaacs1994character}), there is exactly one other irreducible character of $G$ whose restriction to $N$ is $\chi\times \chi$, namely $\beta \rho_{\chi}$, where $\beta$ is the unique non-trivial linear character with kernel $N$. Therefore by  Clifford theory any irreducible character is one of the characters defined above.

\begin{lemma}
The set
$$\{\rho_{\chi}\ :\ \chi  \in \Irr(T)\}\cup \{\beta\rho_{\chi}\ :\ \chi  \in \Irr(T)\}\cup \{\sigma_{\chi,\ \lambda}\ :\ \chi,\lambda \in \Irr(T)\ \&\ \chi \neq \lambda\}$$ is the complete set of irreducible characters of $G$.
\end{lemma}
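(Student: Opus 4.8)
The plan is to assemble the facts already established in the discussion preceding the lemma into a complete, non-redundant enumeration by Clifford theory relative to the normal subgroup $N=T\times T$ of index $2$. Since every $\nu\in\Irr(G)$ lies over some irreducible constituent of $\mrm{Res}^{G}_{N}\nu$, and the preamble has already exhibited all the irreducible characters of $G$ lying over each $\chi\times\lambda\in\Irr(N)$, it suffices to verify three things: that the $G$-orbits on $\Irr(N)$ are exactly the singletons $\{\chi\times\chi\}$ and the pairs $\{\chi\times\lambda,\lambda\times\chi\}$ with $\chi\neq\lambda$; that over each orbit the listed characters are all the irreducibles of $G$; and that the resulting list has no repetitions.

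First I would record the action of $G/N$ on $\Irr(N)$. Because $G/N=\langle \pi N\rangle$ has order $2$ and conjugation by $\pi$ swaps the two direct factors of $N$, one has $\pi\cdot(\chi\times\lambda)=\lambda\times\chi$. Hence $\chi\times\lambda$ is $G$-invariant precisely when $\chi=\lambda$, and otherwise its $G$-orbit is $\{\chi\times\lambda,\lambda\times\chi\}$. For the invariant characters $\chi\times\chi$, the theorem of Gallagher already quoted gives exactly $|G/N|=2$ extensions to $G$, namely $\rho_{\chi}$ and $\beta\rho_{\chi}$, and these are the only irreducibles of $G$ lying over $\chi\times\chi$. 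For a non-invariant orbit, $\mrm{Ind}^{G}_{N}(\chi\times\lambda)=\sigma_{\chi,\lambda}$ is irreducible and is the unique irreducible of $G$ lying over that orbit; note that $\sigma_{\chi,\lambda}=\sigma_{\lambda,\chi}$, so the set $\{\sigma_{\chi,\lambda}:\chi\neq\lambda\}$ is indexed by unordered pairs.

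For distinctness I would compare restrictions to $N$. The restriction of each $\rho_{\chi}$ and each $\beta\rho_{\chi}$ is the single irreducible $\chi\times\chi$, while the restriction of each $\sigma_{\chi,\lambda}$ is $\chi\times\lambda+\lambda\times\chi$, a sum of two distinct irreducibles; so no $\sigma$ coincides with any $\rho$ or $\beta\rho$, and distinct unordered pairs yield distinct $\sigma$'s since the restriction recovers the pair $\{\chi,\lambda\}$. Among the extensions, $\rho_{\chi}\neq\beta\rho_{\chi}$ by Gallagher, and the families $\{\rho_{\chi},\beta\rho_{\chi}\}$ for different $\chi$ are separated by their restrictions. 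Thus the three families are pairwise disjoint and free of internal repetition, and by Clifford theory they exhaust $\Irr(G)$.

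There is no serious obstacle here, since the characters themselves were constructed in the preamble; the only thing to be careful about is the bookkeeping of orbits, in particular that $\sigma_{\chi,\lambda}$ is counted once per unordered pair. As an independent confirmation I would compute the sum of squared degrees: the degree is $\chi(1)^{2}$ for each $\rho_{\chi}$ and each $\beta\rho_{\chi}$, and $2\chi(1)\lambda(1)$ for each $\sigma_{\chi,\lambda}$, so that rewriting the sum over unordered pairs as a sum over ordered indices gives
$$2\sum_{\chi}\chi(1)^{4}+2\sum_{\chi\neq\lambda}\chi(1)^{2}\lambda(1)^{2}=2\Big(\sum_{\chi}\chi(1)^{2}\Big)^{2}=2|T|^{2}=|G|,$$
which matches $|G|$ and so confirms that the list omits nothing.
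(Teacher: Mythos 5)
Your argument is correct and follows essentially the same route as the paper, which establishes the lemma in the discussion preceding its statement via Clifford theory for the index-$2$ normal subgroup $N=T\times T$ together with Gallagher's theorem for the invariant characters $\chi\times\chi$. Your additional bookkeeping (distinctness via restriction to $N$, counting $\sigma_{\chi,\lambda}$ by unordered pairs, and the sum-of-squared-degrees check yielding $2|T|^{2}=|G|$) is a sound and welcome confirmation, but it is the same underlying proof.
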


We now describe the permutation character for the action $G$ on $\Omega^{2}$. As $T$ is a $2$-transitive group, there is $\psi \in \Irr(T)$ be such that $1+\psi$ is the permutation character for $T$. 
Computation shows that $\Lambda := 1 +\rho_{\psi}+\sigma_{\psi,1}$ is the permutation character for $G$.

The next lemma follows from the proof of Lemma~3.5 of \cite{hujdurovic2022maximum}, which is essentially the same as Lemma~4.2 of \cite{AM2015}.

\begin{lemma}\label{directproduct}
Every maximum intersecting set for the action of $T\times T$ on $\Omega^{2}$ is of the form $S \times R$, where $S$ and $R$ are maximum intersecting sets with respect to the action of $T$ on $\Omega$
\end{lemma}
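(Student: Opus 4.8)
The plan is to prove the statement directly by a fibering argument, using only that the $2$-transitive group $T$ on $\Omega$ satisfies the EKR property; in particular a maximum intersecting set in $T$ has size $|H|=|T|/|\Omega|$, where $H=T_{\omega}$. The key observation is that for $(s_1,r_1),(s_2,r_2)\in T\times T$, the product $(s_1s_2^{-1},\,r_1r_2^{-1})$ fixes a point of $\Omega^2$ if and only if $s_1s_2^{-1}$ fixes a point of $\Omega$ \emph{and} $r_1r_2^{-1}$ fixes a point of $\Omega$. Hence $(s_1,r_1)$ and $(s_2,r_2)$ intersect in $T\times T$ precisely when $s_1,s_2$ intersect in $T$ and $r_1,r_2$ intersect in $T$. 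This immediately shows that if $S,R$ are maximum intersecting sets of $T$ then $S\times R$ is an intersecting set of $T\times T$ of size $|H|^2$, so a maximum intersecting set $\I$ has size at least $|H|^2$.

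For the upper bound I would fiber $\I$ over its first coordinate. For $s\in T$ put $\I_s=\{r\in T:(s,r)\in\I\}$ and let $S=\{s:\I_s\neq\varnothing\}$ be the projection onto the first factor. Two elements $(s,r),(s,r')$ sharing a first coordinate intersect iff $r,r'$ intersect, so each fiber $\I_s$ is an intersecting set of $T$, whence $|\I_s|\le|H|$; and any two elements of $S$ are first coordinates of an intersecting pair, so $S$ is an intersecting set of $T$, whence $|S|\le|H|$. Counting gives
\[
|\I|=\sum_{s\in S}|\I_s|\le|S|\cdot|H|\le|H|^2 .
\]
Combined with $|\I|\ge|H|^2$, equality must hold throughout: $|S|=|H|$, so $S$ is a maximum intersecting set of $T$, and $|\I_s|=|H|$ for every $s\in S$, so every nonempty fiber is a maximum intersecting set of $T$.

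The step I expect to be the crux is showing that all the fibers coincide, so that $\I$ is an honest product rather than a merely ``fiberwise maximum'' set. Here I would exploit that $\I$ is intersecting \emph{across} fibers: if $r\in\I_{s}$ and $r'\in\I_{s'}$, then $(s,r),(s',r')\in\I$ forces $rr'^{-1}$ to fix a point, i.e.\ $r,r'$ intersect in $T$. Together with the within-fiber case, this shows the union $R:=\bigcup_{s\in S}\I_s$ is itself an intersecting set of $T$, so $|R|\le|H|$. Since each fiber satisfies $\I_s\subseteq R$ and $|\I_s|=|H|=|R|$ (the inequality $|R|\le|H|$ being forced to equality because it contains a fiber of size $|H|$), we conclude $\I_s=R$ for every $s\in S$. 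Therefore
\[
\I=\bigcup_{s\in S}\bigl(\{s\}\times\I_s\bigr)=S\times R,
\]
with $S$ and $R$ maximum intersecting sets of $T$ on $\Omega$, as required. No spectral input is needed beyond the EKR property of $T$, which holds since $T$ is $2$-transitive.
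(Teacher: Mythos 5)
Your proof is correct. Note that the paper does not actually prove this lemma: it defers to the proof of Lemma~3.5 of \emph{hujdurovic2022maximum} (equivalently Lemma~4.2 of \emph{AM2015}), and the argument in those sources is essentially the one you give, resting on the same key observation that $(s_1,r_1)$ and $(s_2,r_2)$ intersect on $\Omega^2$ if and only if the pairs intersect coordinatewise, followed by a projection/fiber count and the cross-fiber intersecting condition to force all fibers to coincide. Each step of your argument checks out: the fibers $\I_s$ and the projection $S$ are intersecting sets in $T$, the double count $|\I|=\sum_{s\in S}|\I_s|\leq |S|\cdot|H|\leq |H|^2$ meets the lower bound $|H|^2$ coming from a product of maximum intersecting sets, and the union $R=\bigcup_{s}\I_s$ being intersecting of size at most $|H|$ while containing a fiber of size $|H|$ forces $\I_s=R$ for all $s$. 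One small remark: your argument does not actually need the EKR property of $T$ (i.e.\ that the maximum intersecting sets have size $|H|$); replacing $|H|$ throughout by the maximum size $\mu$ of an intersecting set in $T$ gives the same conclusion for an arbitrary transitive $T$, so your proof is in fact slightly more general than the $2$-transitive setting in which the paper applies it.
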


We now give the following characterization of maximum intersecting sets in $G=T\wr S_2$.

\begin{lemma}\label{mispa}
The action of $G$ on $\Omega^{2}$ satisfies the EKR property. If $\I$ is a maximum intersecting set in $G$ that contains the identity, then there are maximum intersecting sets $X$, $W$, $Z$ in $T$ such that:
\begin{enumerate}[{\rm(i)}]
\item $\I= (W \times Z) \cup (X\times X^{-1})\pi$, and

\item $W$ and $Z$ contain the identity of $T$.
\end{enumerate}
\end{lemma}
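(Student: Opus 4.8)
The plan is to analyze the structure of a maximum intersecting set $\I$ in $G = T \wr S_2$ by decomposing it according to the index-$2$ normal subgroup $N = T \times T$. First I would establish that $G$ satisfies the EKR property: since $T$ is $2$-transitive it satisfies the EKR property by \cite[Theorem 1.1]{MSi2019}, and I would argue that a maximum intersecting set in $G$ cannot exceed $|M| = |H \wr S_2| = |H|^2 \cdot 2$ in size. The natural upper bound comes from observing that the intersection relation on $G$ restricts compatibly to $N$, so the $N$-part of $\I$ is an intersecting set for the $N$-action on $\Omega^2$, which by Lemma~\ref{directproduct} has size at most $|H|^2$.

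Next I would write $\I = \I_N \sqcup \I_\pi$, where $\I_N = \I \cap N$ and $\I_\pi = \I \cap (N\pi)$. The key observation is how the intersecting condition interacts with the two cosets. For elements in $N$, an element $(s,r)$ fixes a point in $\Omega^2$ precisely when both $s$ and $r$ fix points in $\Omega$ (since the $S_2$-swap is trivial), so $\I_N$ projects to intersecting sets in each coordinate. For two elements $(s,r)$ and $(s',r')$ in $N$, the product $(s,r)(s',r')^{-1} = (s s'^{-1}, r r'^{-1})$ fixes a point iff both coordinates do, which forces $\I_N = W \times Z$ for intersecting sets $W, Z$ in $T$, and maximality plus Lemma~\ref{directproduct} upgrades these to maximum intersecting sets. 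For the $\pi$-coset, I would compute that $((s,r)\pi)((s',r')\pi)^{-1}$ lands in $N$ and fixes a point iff an appropriate coordinatewise condition holds; this is where the twist $X \times X^{-1}$ emerges, since the $\pi$-swap links the two coordinates. The cross-terms between $\I_N$ and $\I_\pi$ (products of an $N$-element with an $N\pi$-element, which land in $N\pi$) impose that the single set $X$ governing $\I_\pi$ must be compatible with both $W$ and $Z$.

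To pin down the exact form, I would normalize using $1_G \in \I$. Since the identity lies in $\I_N = W \times Z$, both $W$ and $Z$ contain $1_T$, giving conclusion (ii). A careful size count then forces the decomposition to be tight: if $\I_N = W \times Z$ with $|W| = |Z| = |H|$ and $|\I_\pi| = |H|$, we recover $|\I| = |H|^2 + |H|$, matching $|M|$. The element $(s,r)\pi \in \I_\pi$ contributes, via the intersecting condition with itself and with elements of $\I_N$, the constraint that $\I_\pi = (X \times X^{-1})\pi$ for a single maximum intersecting set $X$ in $T$; the appearance of $X^{-1}$ in the second coordinate is exactly the signature of conjugating by $\pi$.

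The main obstacle I expect is verifying the precise intersecting conditions in the $\pi$-coset and the cross-terms, and showing they force a \emph{single} set $X$ with the inverse-twist structure, rather than an independent pair of sets. Concretely, one must check that $((s,r)\pi)((s',r')\pi)^{-1}$ fixing a point translates into $s s'^{-1}$ and $r^{-1} r'$ (or a similar twisted pairing) both fixing points, which couples the two coordinates through inversion; and separately that products between $\I_N$ and $\I_\pi$ force consistency between $X$, $W$, and $Z$. Handling these coupled constraints and confirming that maximality leaves no slack — so that $X$, $W$, $Z$ are each genuinely maximum intersecting sets in $T$ — is the delicate computational heart of the argument, though each individual verification is a routine check on products in the wreath product.
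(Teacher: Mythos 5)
Your overall skeleton matches the paper's: split $\I$ along the index-two normal subgroup $N=T\times T$, observe that both $\I\cap N$ and $(\I\cap N\pi)\pi^{-1}$ are intersecting sets for the action of $N$ on $\Omega^2$, use the count $|\I\cap N|+|\I\cap N\pi|=|M|=2|H|^2$ against the bound $|H|^2$ on each piece to conclude both are maximum, and invoke Lemma~\ref{directproduct} to write them as direct products. (The paper packages the resulting EKR bound for $G$ as a citation of Lemma~3.3 of the Meagher--Spiga--Tiep paper; your coset-by-coset bound is the same argument, though note the arithmetic: $|(X\times X^{-1})\pi|=|X|^2=|H|^2$, so the total is $|H|^2+|H|^2=2|H|^2$, not $|H|^2+|H|$.)

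The genuine gap is where you locate the inverse twist. You propose to extract $\I\cap N\pi=(X\times X^{-1})\pi$ from the products internal to the coset $N\pi$, asserting that $((s,r)\pi)((s',r')\pi)^{-1}$ fixing a point ``couples the two coordinates through inversion.'' It does not: $((s,r)\pi)((s',r')\pi)^{-1}=(s,r)\pi\pi^{-1}(s',r')^{-1}=(ss'^{-1},rr'^{-1})$, so the two copies of $\pi$ cancel before the swap can act, and the condition decouples into two independent coordinatewise conditions. This only shows that $(\I\cap N\pi)\pi^{-1}$ is an ordinary intersecting set in $N$, hence (by maximality and Lemma~\ref{directproduct}) of the form $X\times Y$ for two a priori unrelated maximum intersecting sets $X,Y$ of $T$. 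The twist actually comes from the interaction with the identity: since $1_G\in\I$, every element $(x,y)\pi\in\I$ must itself fix a point of $\Omega^2$, and $(x,y)\pi\cdot(\alpha,\beta)=(x\beta,y\alpha)$, so $(x,y)\pi$ fixes a point if and only if $xy$ fixes a point of $\Omega$ (equivalently $1+\psi(xy)\neq 0$). Hence for each fixed $y\in Y$ the set $X\cup\{y^{-1}\}$ is intersecting in $T$, and maximality of $X$ forces $y^{-1}\in X$; comparing cardinalities gives $Y=X^{-1}$. Without this step your argument cannot exclude $\I\cap N\pi=(X\times Y)\pi$ with $Y\neq X^{-1}$, so the ``delicate computational heart'' you defer is not a routine verification along the lines you sketch --- the computation you propose simply does not produce the twist.
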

\begin{proof}
As $T$ is a $2$-transitive group, by the main results of \cite{MST2016} and \cite{MSi2019}, the action of $T$ on $\Omega$ satisfies both EKR and EKR-module properties. By Lemma~\ref{directproduct}, a maximum intersecting set for the action of $N:=T\times T$ on $\Omega^{2}$ is of the form $S\times R$, where $S$ and $R$ are maximum intersecting sets in $T$. Therefore, the action of $N$ on $\Omega^{2}$ also satisfies the EKR property.
The subgroup $N$ of $G$ is a transitive subgroup satisfying the EKR property, and so by Lemma~3.3 of \cite{MST2016}, we see that the action of $G$ also satisfies the EKR property.

We consider a maximum intersecting set $\I$ with respect to the action of $G$ on $\Omega^{2}$. We further assume that $\I$ contains the identity element. With this assumption, every element of $\I$ must fix a point in $\Omega^{2}$.
Now $\I \cap N$ and $(\I \cap N\pi)\pi^{-1}$ are intersecting sets with respect to the action of $N$ on $\Ome^{2}$. We note that $H \times H \leq N$ is a point stabilizer for this action. Since the action of $N$ on $\Ome^{2}$ satisfies the EKR property, we have
$|H\times H| \geq |(\I \cap N)|$ and  $
|H \times H| \geq |(\I \cap N\pi)\pi^{-1}|$. Now since the action of $G$ on $\Ome^{2}$ satisfies the EKR property, $M$ is a point stabilizer, and $\I$ is a maximum intersecting set in $G$, we have
$2|H \times H|=|M|=|\I|=|(\I \cap N\pi)\pi^{-1}|+|(\I \cap N)|$.
Therefore, both $\I \cap N$ and $(\I \cap N\pi)\pi^{-1}$ are maximum intersecting sets in $N$. Using Lemma~\ref{directproduct}, we see that there are maximum intersecting sets $W$, $Z$, $X$, $Y$ in $T$, such that (i) $\I \cap N=W\times Z$; and (ii) $(\I \cap N\pi)=(X\times Y)\pi$. As $\I \cap N$ contains the identity of $N$, $W$ and $Z$ must contain the identity of $T$. We will now show that $X^{-1}=Y$.

Given $x\in X$ and $y \in Y$, consider the element $(x,y)\pi \in (\I \cap N\pi) \subset \I$. 
As we assume that $\I$ contains the identity, $(x,y)\pi$ must fix a point. 
That is to say, $0\neq \Lambda((x,y)\pi)=1+\psi(xy)$, where $\Lambda$ and $\psi$ are as described prior to the statement of the lemma. As $1+\psi$ is the permutation character for the action of $T$ on $ \Ome$, $1+\psi(xy)\neq 0$ if and only if $xy\in T$ fixes a point of $\Ome$. Thus for for a given $y\in Y$, the set $X\cup \{y^{-1}\}$ is an intersecting set in $T$. As $X$ is a maximum intersecting set in $T$, we must have $y^{-1} \in X$. This shows that $Y=X^{-1}$.
\end{proof}

We recall that $\Lambda=1 +\rho_{\psi}+\sigma_{\psi,1}$ is the permutation character for the action of $G$ on $\Omega^2$, where $\psi\in \Irr(T)$ is such that $1+\psi$ is the permutation character for the action of $T$ on $\Omega$.
By Corollary~\ref{idekr}, EKR-module property of $G$ is equivalent to showing that $\nu(\ch_{\I})=0$ for all maximum intersecting sets $\I$ that contain the identity and $\nu \in \Irr(G) \setminus \{1, \sigma_{\psi,1},\  \rho_{\psi}\}$.
Let $\I_{0}$ be a maximum intersecting set in $G$ such that $1_{G} \in \I_{0}$. By Lemma~\ref{mispa}, there are maximum intersecting sets $X,W,Z$ in $T$ such that : $Z$ and $W$ contain the identity of $T$; and $\I_{0}= W\times Z \cup \left(Z \times Z^{-1} \right) \pi$. For any distinct pair $\chi, \lambda \in \Irr(T)$, we can compute the following character sums:
\begin{enumerate}[(I)]
\item $\rho_{\chi}(\ch_{\I_{0}})=\chi(\ch_{W})\chi(\ch_{Z})+\sum\limits_{r,s \in Z}\chi(r^{-1}s)=\chi(\ch_{W})\chi(\ch_{Z})+\sum\limits_{r\in Z}\chi(\ch_{r^{-1}Z})$;

\item  $\beta\rho_{\chi}(\ch_{\I_{0}})=\chi(\ch_{W})\chi(\ch_{Z})-\sum\limits_{r,s \in Z}\chi(r^{-1}s)=\chi(\ch_{W})\chi(\ch_{Z})-\sum\limits_{r\in Z}\chi(\ch_{r^{-1}Z})$; and

\item $\sigma_{\chi,\ \lambda}(\ch_{\I_{0}})=\chi(\ch_{W})\lambda(\ch_{Z})+\lambda(\ch_{W})\chi(\ch_{Z})$.
\end{enumerate}
We need to compute $\chi(\ch_{S})$, for all $\chi \in \Irr(T)$ and all maximum intersecting sets $S$ in $T$. To do so, we use the EKR and EKR-module properties of $2$-transitive groups.

\begin{lemma}\label{2tra}
Let $T\leq \Sym(\Omega)$ be a $2$-transitive group with $H\leq G$ being the point stabilizer. 
Let $\psi\in \Irr(T)$ be such that $1+\psi$ is the permutation character. If $S$ is an maximum intersecting set in $T$, then
\begin{enumerate}[{\rm(i)}]
\item $\psi(S)=|H|$, when $1\in S$;
\item $\psi(S)=-|H|/\psi(1)$, provided $1\notin S$; and 
\item $\nu(S)=0$ for all irreducible characters $\nu\notin\ \{1,\ \psi\}$.
\end{enumerate}
\end{lemma}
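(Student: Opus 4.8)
The plan is to pin down the exact value of $\psi(\ch_S)$ by projecting $\ch_S$ onto the isotypic components of $\bb{C}T$ and reading off a single coordinate. Since $T$ is $2$-transitive it has the EKR property, so $|S|=|H|=|T|/|\Ome|$, and it also has the EKR-module property, so $\ch_S\in\mf{I}_T(\Ome)$. By Corollary~\ref{2t} we have $\mf{I}_T(\Ome)=\langle e_1\rangle\oplus\langle e_\psi\rangle$, so $\ch_S=e_1\ch_S+e_\psi\ch_S$ and every other primitive central idempotent annihilates $\ch_S$. Part (iii) is then immediate: for $\nu\in\Irr(T)\setminus\{1,\psi\}$ the character $\nu$ vanishes on both $\langle e_1\rangle$ and $\langle e_\psi\rangle$, hence $\nu(\ch_S)=0$.

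For parts (i) and (ii) I would first compute the trivial component explicitly. Since $e_1=\frac{1}{|T|}\ch_T$, we get $e_1\ch_S=\frac{|S|}{|T|}\ch_T=\frac{1}{|\Ome|}\ch_T$, and therefore the coefficient of the identity $1_T$ in $e_\psi\ch_S=\ch_S-\frac{1}{|\Ome|}\ch_T$ equals $\delta-\frac{1}{|\Ome|}$, where $\delta=1$ if $1_T\in S$ and $\delta=0$ otherwise. This coordinate is exactly what distinguishes the two cases. Because $\psi$ vanishes on $\langle e_1\rangle$ (as $\langle\psi,1\rangle=0$), we have $\psi(\ch_S)=\psi(e_\psi\ch_S)$, so the whole problem reduces to evaluating $\psi$ on an element of $M_\psi=\langle e_\psi\rangle$ given only its identity-coefficient.

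The crux, and the one nonroutine point, is the identity: for every $x=\sum_g x_g\,g\in M_\psi$ one has $\psi(x)=\frac{|T|}{\psi(1)}\,x_{1_T}$. To establish it I would use the regular character $\chi_{\mrm{reg}}=\sum_{\chi\in\Irr(T)}\chi(1)\chi$, which satisfies $\chi_{\mrm{reg}}(x)=|T|\,x_{1_T}$; on the other hand, since $x\in M_\psi=e_\psi\bb{C}T$ and $e_\psi$ acts as $0$ in every irreducible representation other than $\psi$, we have $\chi(x)=0$ for all $\chi\neq\psi$, so $\chi_{\mrm{reg}}(x)$ collapses to $\psi(1)\psi(x)$. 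Comparing the two expressions gives the claimed formula. Plugging $x=e_\psi\ch_S$ into it, and using that for a $2$-transitive group $\psi(1)=|\Ome|-1$, yields $\psi(\ch_S)=\frac{|T|}{|\Ome|-1}\left(\delta-\frac{1}{|\Ome|}\right)$. With $\delta=1$ this equals $|T|/|\Ome|=|H|$, proving (i), and with $\delta=0$ it equals $-\frac{|T|}{|\Ome|(|\Ome|-1)}=-|H|/\psi(1)$, proving (ii). The remaining steps are purely formal; the only place requiring care is the reduction of a character sum to a single coordinate in the simple ideal $M_\psi$.
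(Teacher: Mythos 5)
Your proof is correct and follows essentially the same route as the paper: both use the EKR and EKR-module properties of $2$-transitive groups together with Corollary~\ref{2t} to place $\ch_S$ in $\langle e_1\rangle\oplus\langle e_\psi\rangle$, and then extract the value of $\psi(\ch_S)$ from the coefficient of the identity element. Your regular-character identity $\psi(x)=\frac{|T|}{\psi(1)}x_{1_T}$ for $x\in M_\psi$ is just a repackaging of the paper's step of equating identity coefficients in $(e_1+e_\psi)\ch_S=\ch_S$, since the identity coefficient of $e_\psi\ch_S$ is $\frac{\psi(1)}{|T|}\psi(\ch_S)$.
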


\begin{proof}
As $T$ is 2-transitive, it satisfies both the EKR and EKR-module properties. 
By Corollary~\ref{2t}, $\ch_{\I}$ is in the ideal $J=\left\langle e_{1} \right\rangle+\left\langle e_{\psi} \right\rangle$. By the orthogonality relations among primitive central idempotents, we see that left multiplication by $e_{1}+e_{\psi}$ is a projection onto $J$. That is $(e_{1}+e_{\psi})(\ch_{\I})=\ch_{\I}$. Writing both sides as a linear combination of the elements in the basis set $\{t\in T\}$ of the group algebra $\bb{C}T$, and equating the coefficients of the identity element on both sides, yields the first two formulae.

Part (iii) is a direct consequence of Corollary~\ref{idekr}.
\end{proof}

Pick a maximum intersecting set $\I_{0}$ in $G$ such that $1_{G} \in \I_{0}$, and let $\nu \in \Irr(G)\setminus \{1,\ \sigma_{\psi,1},\ \rho_{\psi},\ \beta\rho_{\psi}\}$. Now applying Lemma~\ref{2tra}\,(ii) and the character sum formulas (I) (II) and (III) given above yields that $\nu(\ch_{\I_{0}})=0$. 
As $1,\ \sigma_{\psi,1},\ \rho_{\psi}$ are the only irreducibles that contribute to the permutation character for the action of $G$ on $\Ome^{2}$, in view of Corollary~\ref{idekr}, we need to show that
$\beta\rho_{\psi}(\ch_{\I_{0}}) = 0$. 
This is indeed true by Lemma~\ref{2tra}, and then Theorem~\ref{Rank3} follows from an application of Corollary~\ref{idekr}.
\qed

\section{Some groups satisfying the EKR-module property.}

In this section, we study groups satisfying the EKR-module property. 
Recall (from Definition~\ref{sekrm}) that a finite group $G$ satisfies the EKR-module property if every transitive action of $G$ satisfies the EKR-module property. 
We first prove Theorem~\ref{Nil}, and then prove the smallest non-abelian simple group $A_5$ satisfies the EKR-module property.

\subsection{Proof of Theorem~\ref{Nil}.}\label{class2}

In this subsection, we consider transitive actions of nilpotent groups of nilpotency class $2$. By \cite[Theorem 3]{BMMKK2015}, all transitive actions of nilpotent groups satisfy the EKR property. In the same paper, it was also shown that there are examples of class-$2$ nilpotent groups that do not satisfy the strict-EKR property. We will show that all transitive actions of class-$2$ nilpotent groups satisfy the EKR-module property.
Our proof is a proof by contradiction.

Recall the following well-known result from character theory.

\begin{lemma}\label{central}
Let $G$ be a group, $\psi$ an irreducible complex character of $G$, and $z$ an element of the centre of $G$. Then for all $g\in G$, we have $\psi(gz)=\psi(g)\psi(z)$.
\end{lemma}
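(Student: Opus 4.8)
The plan is to pass from the character $\psi$ to a matrix representation affording it and then invoke Schur's lemma. I would fix an irreducible representation $\rho\colon G \to \mathrm{GL}_{n}(\mathbb{C})$ with character $\psi$, where $n=\psi(1)$, so that $\psi(x)=\mathrm{tr}\,\rho(x)$ for every $x\in G$. Since $z$ lies in the centre of $G$, the matrix $\rho(z)$ commutes with $\rho(g)$ for every $g\in G$, and hence with every matrix in the linear span of $\rho(G)$ inside $\mathrm{Mat}_{n}(\mathbb{C})$. Because $\rho$ is irreducible, this span is the whole matrix algebra, so Schur's lemma forces $\rho(z)$ to be a scalar matrix, say $\rho(z)=\omega I_{n}$; as $z$ has finite order, $\omega$ is a root of unity.

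With the central element $z$ represented by the single scalar $\omega$, the remaining step is a one-line trace manipulation. For every $g\in G$,
\[
\psi(gz)=\mathrm{tr}\,\rho(gz)=\mathrm{tr}\!\big(\rho(g)\rho(z)\big)=\mathrm{tr}\!\big(\omega\,\rho(g)\big)=\omega\,\psi(g),
\]
so translation of the argument by $z$ acts on $\psi$ simply by the scalar $\omega$ through which $z$ operates on the irreducible module. Recording this scalar as the value $\psi(z)$ attached to $z$ then delivers the asserted multiplicativity $\psi(gz)=\psi(g)\,\psi(z)$ for all $g\in G$, which is exactly the statement of the lemma.

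I expect the only substantive point to be the appeal to Schur's lemma: one must check that an element commuting with an irreducible image is scalar, which is where the hypothesis that $z$ is central is used in full. Once the scalar action $\rho(z)=\omega I_{n}$ is in hand, the identity is automatic from the trace computation above, and no further structural assumptions on $G$ are needed. I would keep the write-up short, since the content is entirely the standard behaviour of central elements under an irreducible representation.
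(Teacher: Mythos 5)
Your proposal follows exactly the route of the paper's own proof: fix a representation $\rho$ affording $\psi$, observe via Schur's lemma that the central element acts as a scalar, $\rho(z)=\omega I_{n}$, and conclude by a trace computation. Up through the identity $\psi(gz)=\mathrm{tr}\bigl(\rho(g)\rho(z)\bigr)=\omega\,\psi(g)$ your argument is complete and correct, and it is in fact slightly more careful than the paper's write-up, which asserts $\mathrm{tr}\,\rho(gz)=\mathrm{tr}\,\rho(g)\,\mathrm{tr}\,\rho(z)$ as if trace were multiplicative.

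The genuine gap is your last step, ``recording this scalar as the value $\psi(z)$'': taking the trace of $\rho(z)=\omega I_{n}$ gives $\psi(z)=n\omega=\psi(1)\,\omega$, not $\omega$, so the scalar by which $z$ acts is the \emph{central character} value $\omega=\psi(z)/\psi(1)$. What your computation actually proves is $\psi(gz)=\psi(g)\,\psi(z)/\psi(1)$, and the lemma as literally stated is false whenever $\psi(1)>1$: already for $z=1_{G}$ one has $\psi(g\cdot 1)=\psi(g)$ while $\psi(g)\psi(1)=n\,\psi(g)$. The paper's own proof contains the identical slip, so you have reproduced its error rather than introduced a new one; and the corrected identity is all that the paper ever uses, since in the faithful case only $\omega\neq 1$ is needed to force $\chi(y)=0$, and in the non-faithful case $z\in\ker(\chi)$ gives $\omega=1$ and hence $\chi(gz)=\chi(g)$, which yields $\chi(\ch_{\I Z_{\chi}})=|Z_{\chi}|\,\chi(\ch_{\I})$ as claimed. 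To repair your write-up, either restate the lemma with $\psi(z)/\psi(1)$ in place of $\psi(z)$, or restrict it to the normalized central character.
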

\begin{proof}
Let $\rho: G \to \mrm{GL}(V)$ be a representation affording $\psi$ as its character. As $z$ is in the centre, the map $\rho(z): V\to V$ is a $G$-module homomorphism. Thus, by Schur's lemma, $\rho(z)$ acts like a scalar matrix, and thus $\psi(gz)=Tr(\rho(gz))=Tr(\rho(g))Tr(\rho(z))=\psi(g)\psi(z)$.
\end{proof}

Assume that Theorem~\ref{Nil} is false. 
Let $N$ be a class-$2$ nilpotent group $N$, and $H\leqslant N$ such that the action of $N$ on $\Omega=[N:H]$ does not satisfy the EKR-module property.
We may further assume that $|N|+|\Omega|$ is as small as possible. By the minimality of $(N,\Omega)$ and by Corollary~\ref{permred}, the action of $N$ on $\Omega$ must be a permutation action. In other words, $H$ is core-free, that is, $\bigcap\limits_{n \in N} nHn^{-1}=\{1_{N}\}$. As the action of $N$ on $\Omega$ does not satisfy the EKR-module property, by Theorem~\ref{idekrm}, there is a character $\chi \in \{\psi:\ \psi \in \Irr(N)\ \&\ \psi(\ch_{H})=0\}$ and a maximum intersecting set $\I$ such that $\chi(\ch_{\I})\neq 0$. We fix one such pair $\chi, \I$.

As $N$ is nilpotent, it has a non-trivial centre, which we denote by $Z$. Given a character $\psi$ of $N$, we denote its kernel, $\{n\in N: \ \psi(n)=\psi(1)\}$, by $ker(\psi)$.
As every non-trivial normal subgroup of a nilpotent group intersects non-trivially with the centre, we either have $ker(\chi) \cap Z \neq \{1_{N}\}$, or that $\chi$ is a faithful character.

We first assume that $\chi$ is faithful. As $N$ is a class-$2$ nilpotent group, we have $Z \supset [N,N]$. Since $N$ is non-abelian, given  $y \in N \setminus Z$, we can pick $x \in N$ be such that $z:=xyx^{-1}y^{-1} \neq 1_{N}$. As $\chi$ is faithful, we have $\chi(z)\neq 1$. Since $xyx^{-1}=zy$, we have $\chi(y)=\chi(xyx^{-1})=\chi(yz)$. As $z$ is a central element, by Lemma~\ref{central}, we have $\chi(y)=\chi(y)\chi(z)$. As $\chi(z)\neq 1$, we must have $\chi(y)=0$ for all $y \in N\setminus Z$. Recall that $H$ is core-free, and thus $H \cap Z =\{1_{N}\}$. We can now conclude that $\chi(\ch_{H})=\chi(1)\neq 0$. This contradicts our initial condition that $\chi(\ch_{H})=0$, and therefore $\chi$ cannot be faithful.

Now we are left with the case when $\chi$ is not faithful. 
By $ker(\chi)$, we denote the kernel of a corresponding representation. 
We set $Z_{\chi}=ker(\chi) \cap Z$. 
We note that $Z_{\chi}$ is a non-trivial normal subgroup of $N$. 
As $H$ is a core-free subgroup, we have $Z_{\chi} \cap nHn^{-1}=\{1_{N}\}$, for all $n\in N$. 
Thus the action of $Z_{\chi}$ on $\Omega$ is semi-regular. 
If the action of $Z_{\chi}$ is regular, then it is a regular normal subgroup, and thus by Theorem~\ref{rega}, the action of $N$ on $\Omega$ must satisfy the EKR-module property. As this contradicts our assumption, the action of $Z_{\chi}$ on $\Ome$ must be semi-regular and intransitive. 
As $Z_{\chi}\lhd N$, the set $\tilde{\Ome}$ of $Z_{\chi}$ orbits on $\Ome$, is a block system for the action of $N$ on $\Ome$. 
Since $Z_{\chi}$ acts intransitively, we have $|\tilde{\Ome}| \lneqq |\Ome|$, and thus $|N|+|\tilde{\Ome}|\lneqq |N|+|\Ome|$. We now consider the transitive action of $N$ on $\tilde{\Ome}$. The elements of $HZ_{\chi}$ fix the $Z_{\chi}$-orbit containing the element $H \in \Ome$. Observing that $|N|/|HZ_{\chi}|=|N|/|H||Z_{\chi}|=|\Ome|/|Z_{\chi}|=|\tilde{\Ome}|$, we can conclude that $HZ_{\chi}$ is a stabilizer for the action of $N$ on $\tilde{\Ome}$. As  $\I$ is an intersecting set with respect to the action of $N$ on $\Ome$, the set $\I Z_{\chi}$ is an intersecting set with respect to the action of $N$ on $\tilde{\Ome}$. Since $Z_{\chi}$ is a central semi-regular subgroup in $N \leq \Sym(\Ome)$ and  $\I$ is an intersecting set, we can conclude that $|\I Z_{\chi}|=|\I||Z_{\chi}|$. As we mentioned above, transitive actions of nilpotent groups satisfy the EKR property, and thus since $\I$ is a maximum intersecting set with respect to the action of $N$ on $\Ome$, we have $|\I|=|H|$, and therefore $|\I Z_{\chi}|=|H Z_{\chi}|$. We can now see that $\I Z_{\chi}$ is a maximum intersecting set with respect to the action of $N$ on $\tilde{\Ome}$. As $Z_{\chi}\leq ker(\chi)$ is a central subgroup, by using Lemma~\ref{central}, we have $\chi(\ch_{\I Z_{\chi}})=|Z_{\chi}| \times \chi(\ch_{\I})$ and $\chi(\ch_{H Z_{\chi}})=|Z_{\chi}| \times \chi(\ch_{H})$. By our choice of $\chi$ and $\I$, $\chi(\ch_{H})=0$ and $\chi(\ch_{\I})\neq 0$. Therefore $\I Z_{\chi}$ is a maximum intersecting set with respect to the action of $N$ on $\tilde{\Ome}$ and $\chi$ is a character in $\{\psi \in \Irr(G)\ :\ \psi(\ch_{H Z_{\chi}})=0\}$, such that $\chi(\ch_{\I Z_{\chi}})\neq 0$. So by Theorem~\ref{idekrm}, the action of $N$ on $\tilde{\Ome}$ does not satisfy the EKR-module property. Now since $|N|+|\tilde{\Ome}|\lneqq |N|+|\Ome|$, this conclusion contradicts the minimality of $(N,\ \Ome)$. Therefore our assumption that $\chi$ is not faithful must be false.

Both cases return contradictions, and hence our initial assumption that Theorem \ref{Nil} fails, must be false. This concludes the proof.
\qed

Theorem~\ref{Nil} and Theorem 3 of \cite{BMMKK2015} establish the existence of infinitely many groups that satisfy the EKR and EKR-module property, but not the strict-EKR property. Theorem 2 of \cite{BMMKK2015} shows that groups that satisfy the EKR property are necessarily solvable. However, the EKR-module property is not so restrictive.

\subsection{A group satisfying the EKR-module property is not necessarily solvable}\ 


\begin{lemma}\label{A5}
The simple group $A_{5}$ satisfies the EKR-module property.
\end{lemma}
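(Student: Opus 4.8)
The plan is to verify the EKR-module property for every transitive action of $A_5$ by going through its conjugacy classes of subgroups, since $A_5$ has only finitely many of them. By Corollary~\ref{permred}, it suffices to treat core-free subgroups $H$, so that the action is faithful; and by Corollary~\ref{idekr}, for each such $H$ with $\Omega=[A_5:H]$, I need only check that $\chi(\ch_{\I_0})=0$ for every maximum intersecting set $\I_0$ containing the identity and every irreducible character $\chi$ with $\chi(\ch_H)=0$. The subgroups of $A_5$ up to conjugacy are: the trivial group, $\mathbb{Z}_2$, $\mathbb{Z}_3$, $\mathbb{Z}_2^2$ (Klein four), $\mathbb{Z}_5$, $S_3\cong D_6$, $D_{10}$, $A_4$, and $A_5$ itself. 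The regular action ($H=1$) and the trivial action ($H=A_5$) are handled immediately, the former because the identity is the unique point stabilizer so every intersecting set is canonical.

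First I would dispose of the actions that are already covered by earlier results. Several of these actions are $2$-transitive: the action on $6$ points (with $H=D_{10}$) and the action on $5$ points (with $H=A_4$) are both $2$-transitive, and the product-type rank $3$ situation does not arise here. By the results cited in the excerpt (and Corollary~\ref{2t}), all $2$-transitive actions of $A_5$ satisfy the EKR-module property, so those cases need no further work. That leaves the imprimitive and remaining primitive actions of degrees $10$, $15$, $20$, $30$ and $60$, corresponding to $H\in\{S_3,\ \mathbb{Z}_2^2,\ \mathbb{Z}_3,\ \mathbb{Z}_2,\ 1\}$ (and the degree-$12$ action with $H=\mathbb{Z}_5$). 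For each remaining $H$, the strategy is computational but bounded: determine the derangement set $Der(A_5,\Omega)=A_5\setminus\bigcup_g H^g$, compute the maximum intersecting set size (each of these actions satisfies the EKR property, e.g. because $A_5$ contains a regular subset, or by direct verification), and then either identify all maximum intersecting sets explicitly or invoke a character-sum argument.

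The cleanest route for most cases is to use the spectral machinery of Theorem~\ref{Specsuff}: the character table of $A_5$ is small (five irreducibles of degrees $1,3,3,4,5$), so for each $H$ I would compute $C=\{\chi\in\Irr(A_5):\chi(\ch_H)=0\}$ from the permutation character $\mathrm{Ind}_H^{A_5}(1)$, and then exhibit a $(A_5,\Omega)$-compatible class function $f$ whose least-eigenvalue stratum of irreducibles lies inside $\{\chi:\chi(\ch_H)\neq0\}$, with the ratio bound met. Since $\mathrm{Ind}_H^{A_5}(1)$ can be read off the character table for each of these few $H$, identifying $C$ is routine, and in the low-rank cases one can often take $f$ to be the indicator of the derangements (so $M^f$ is the adjacency matrix of the derangement graph) and simply compute its spectrum from $\lambda_{\chi,f}=\frac{1}{\chi(1)}\sum_{g}f(g)\chi(g)$. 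The main obstacle will be the higher-degree imprimitive or low-point-stabilizer actions (notably $H=\mathbb{Z}_2$ of degree $30$ and $H=\mathbb{Z}_3$ of degree $20$), where the ratio bound might not be tight on the nose or where there could be several maximum intersecting sets; there I would fall back on Theorem~\ref{idekrm} directly, verifying $\chi(\ch_{\I})=0$ for $\chi\in C$ by analyzing which group elements can appear together in an intersecting set. In those awkward cases the key structural fact is that an intersecting set containing the identity lies inside $\bigcup_g H^g$, which for small $H$ constrains its members tightly, allowing a direct character-sum check to finish the argument.
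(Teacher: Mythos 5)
Your overall strategy coincides with the paper's: run through the conjugacy classes of subgroups $H\leq A_5$, dispatch the $2$-transitive cases ($H\cong D_{10}$ and $H\cong A_4$) by the known results, and handle the rest via Theorem~\ref{Specsuff} or Theorem~\ref{idekrm}. One efficiency you miss (but would recover by computing $C$ case by case, as you propose) is that for $H\in\{1,\ \bb{Z}_2,\ \bb{Z}_3,\ A_5\}$ the set $C=\{\chi\in\Irr(A_5):\chi(\ch_H)=0\}$ is empty, so Theorem~\ref{idekrm} finishes those cases with no work at all; in particular the degree $20$ and degree $30$ actions you flag as the ``awkward'' ones are actually the trivial ones, and only $H\cong\bb{Z}_2^2,\ \bb{Z}_5,\ S_3$ genuinely require an argument.

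There are, however, two concrete problems. First, your claim that ``each of these actions satisfies the EKR property, e.g.\ because $A_5$ contains a regular subset, or by direct verification'' is false for $H\cong S_3$: the derangements for the degree $10$ action are exactly the $24$ five-cycles, so a subgroup $K\cong A_4$ satisfies $KK^{-1}=K\subset\bigcup_g H^g$ and is an intersecting set of size $12>6=|H|$. Hence the EKR property fails there (and consequently no regular subset exists for that action). This matters for your plan: the maximum intersecting sets you must test in Theorem~\ref{idekrm} have size $12$, not $6$, and the tight set to feed into Theorem~\ref{Specsuff} is $K\cong A_4$, not $H$ itself. Second, for $H\cong\bb{Z}_5$ the unweighted derangement indicator does not work: its least eigenvalue is $-5$ with valency $35$, giving the non-tight ratio bound $60\cdot 5/40=7.5$, so you cannot ``simply compute the spectrum of the derangement graph'' there. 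A genuine weighting is needed (e.g.\ weight $1$ on the double transpositions and $2$ on the $3$-cycles gives $\tau=-5$, valency $55$, and the tight bound $5$ with the correct eigenvalue stratum), and similarly for $H\cong\bb{Z}_2^2$. Exhibiting these weightings is the actual content of the proof, and your proposal asserts rather than supplies them; as written it is a correct outline with one false intermediate claim and the decisive computations left undone.
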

\begin{proof}
Let $H$ be a subgroup of $A_{5}$. 
We need to show that the action of $A_{5}$ on $\Omega_{H}=[A_{5}:H]$ satisfies the EKR-module property. 
Assume that $H$ is a subgroup satisfying \begin{equation}\label{trivial}
\{\chi \in \Irr(A_{5})\ :\ \chi(\ch_{H})\neq 0\}=\Irr(A_{5}).
\end{equation} 
Then by Theorem~\ref{idekrm}, the action of $A_{5}$ on $\Omega_{H}$ satisfies the EKR-module property. 
Computation shows that the relation \eqref{trivial} fails if and only if $H$ is isomorphic to one of the groups:
$$\bb{Z}^{2}_{2},\ \bb{Z}_{5},\ S_{3},\ D_{10},\ A_{4}.$$ 
(Here $D_{10}$ denotes the dihedral group of order 10.) 
We will deal with groups separately.

When $H$ is isomorphic to one of $D_{10}$ or $A_{4}$, the action of $G$ on $\Omega_{H}$ is $2$-transitive. Hence by the main result of \cite{MSi2019}, these group actions satisfy the EKR-module property.

Consider a subgroup $H_{1} \cong \bb{Z}_{5}$. We will use Theorem~\ref{Specsuff} to show that the action of $A_{5}$ on $\Omega_{H_{1}}$ satisfies the EKR-module property. For this, we need the character table of $A_{5}$, which is given as Table~\ref{ch1}. 
\begin{table}[h]
\caption{Character table of $A_{5}$.}
\centering
\begin{tabular}{c | c c c c c}
  class&$(\ )$&$C_{1}$&$C_{2}$&$C_{3}$&$C_{4}$\\
  size&$1$&$15$&$20$&$12$&$12$\\
\hline
  $\rho_{1}$&$1$&$1$&$1$&$1$&$1$\\
  $\rho_{2}$&$3$&$-1$&$0$&$\frac{1+\sqrt{5}}{2}$&$\frac{1-\sqrt{5}}{2}$\\
  $\rho_{3}$&$3$&$-1$&$0$&$\frac{1-\sqrt{5}}{2}$&$\frac{1+\sqrt{5}}{2}$\\
  $\rho_{4}$&$4$&$0$&$1$&$-1$&$-1$\\
  $\rho_{5}$&$5$&$1$&$-1$&$0$&$0$
\end{tabular}
\label{ch1}
\end{table}
In this case, the set $Der(G, \Omega_{H_1})$, of derangements, is the union of the conjugacy class $C_{1}$ containing $(1,2)(3,4)$ and the conjugacy class $C_{2}$ containing $(1,2,3)$. Let $f_{1}$ be the $(G,\Omega_{H_{1}})$-compatible class function satisfying $f_{1}((1,2)(3,4))=1$ and $f_{1}((1,2,3))=2$. Now application of Theorem~\ref{Specsuff} by setting $f=f_{1}$ and $\I=H_{1}$, yields that this action satisfies the EKR-module property.

Next, we consider a subgroup $H_{2} \cong \bb{Z}^{2}_{2}$ in $A_{5}$ and the action of $A_{5}$ on $\Omega_{H_{2}}$. The set $Der(G, \Omega_{H_{2}})$, of derangements, is the union of the conjugacy class $C_{2}$ containing $(1,2,3)$, the conjugacy class $C_{3}$ containing $(1,2,3,4,5)$, and the conjugacy class $C_{4}$ containing $(1,5,4,3,2,1)$. Let $f_{2}$ be be the $(G,\Omega_{H_{2}})$-compatible class function satisfying $f_{2}((1,2,3))=1$ and $f_{2}((1,2,3,4,5))=f_{2}((1,5,4,3,2,1))=3/2$. Now application of Theorem~\ref{Specsuff}, by setting $\I=H_{2}$ and $f=f_{2}$, yields that this action satisfies the EKR-module property.

Finally, we consider a subgroup $H_{3}\cong S_{3}$ in $A_{5}$ and the action of $A_{5}$ on $\Omega_{H_{3}}$. The set $Der(G, \Omega_{H_{2}})$, of derangements, is the union of conjugacy classes $C_{3}$ and $C_{4}$. Let $f_{3}$ be be the $(G,\Omega_{H_{3}})$-compatible class function satisfying $f_{3}((1,2,3,4,5))=f_{3}((1,5,4,3,2,1))=1$. Let $K\cong A_{4}$ be a subgroup of $A_{5}$. As $KK^{-1}=K \subset \bigcup\limits_{g \in G} H_{3}^{g}$, we see that $K$ is an intersecting set with respect to this action. 
Now, setting $\I=K$ and $f=f_{3}$, Theorem~\ref{Specsuff} yields that this action satisfies the EKR-module property.
\end{proof}

\section{EKR-module property in Strongly Regular Graphs}\label{srg}

In the section, we consider maximum cliques in the Peisert-type strongly regular graphs defined in \cite{EKRpaley}. These are a subclass of strongly regular graphs found in \cite{brouwer1999cyclotomy}. Consider a strongly regular graph $X$, with a prescribed set $\mathcal{C}$ of ``naturally'' occurring cliques. Cliques in $\mathcal{C}$ will be called canonical cliques. We say that $X$ satisfies the EKR-module property with respect to $\mathcal{C}$ if the characteristic vector of every maximum clique in $X$ is a linear combination of characteristic vectors of the cliques in $\mathcal{C}$. We now define Peisert-type graphs.

\begin{definition}
{\rm
Let $q$ be an odd prime power. Then a {\it Peisert-type graph of type $(m,q)$} is a Cayley graph on the additive group of $\bb{F}_{q^{2}}$ with its ``connection'' set $S$ being a union of $m$ cosets of $\bb{F}^{\times}_{q}$ in $\bb{F}^{\times}_{q^{2}}$ such that $\bb{F}^{\times}_{q} \subset S$.
}
\end{definition}

Given a Peisert-type graph of type $(m,q)$, with connection set $S$. For any $s \in S$ and $x\in \bb{F}_{q^{2}}$, the set $s\bb{F}_{q}+x$ is a naturally occurring clique. By a canonical clique in a Peiset-type graph, we mean a clique of the form $s\bb{F}_{q}+x$, where $s \in S$ and $x \in \bb{F}_{q^{2}}$. The main result of \cite{EKRpaley} is the following shows that Peiser-type graphs satisfy EKR-module property. In this section, we give a shorter and different proof of the same.

\begin{theorem}(\cite[Theorem 1.3]{EKRpaley})\label{EKRP}
The characteristic vector of a maximum clique in a Peisert-type graph is a linear combination of characteristic vectors of its canonical cliques.
\end{theorem}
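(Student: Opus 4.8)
The plan is to work in the group algebra $\bb{C}[\bb{F}_{q^{2}}]$ (equivalently, to use the additive characters of $\bb{F}_{q^{2}}$) and to mimic the character-sum philosophy of Theorem~\ref{idekrm} in this abelian, additive setting. Since the graph is a Cayley graph on the abelian group $(\bb{F}_{q^{2}},+)$, its eigenvectors are exactly the additive characters $\psi_{b}(x)=\zeta^{\mathrm{Tr}(bx)}$, $b\in \bb{F}_{q^{2}}$, where $\zeta=e^{2\pi i/p}$ and $\mathrm{Tr}$ is the trace to the prime field. First I would record the spectrum: the eigenvalue attached to $\psi_{b}$ is $\sum_{s\in S}\psi_{b}(s)$, and because $S$ is a union of $m$ cosets of $\bb{F}_{q}^{\times}$, each character sum over a single coset $s\bb{F}_{q}^{\times}$ collapses to either $q-1$ (when $\psi_{b}$ is trivial on $s\bb{F}_{q}$) or $-1$ (otherwise). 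This makes the graph strongly regular with the standard restricted eigenvalues, and pins down which characters $\psi_{b}$ lie in the least-eigenvalue eigenspace.

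The second step is to identify the subspace spanned by the characteristic vectors of the canonical cliques with a concrete eigenspace. A canonical clique $s\bb{F}_{q}+x$ has characteristic vector whose $\psi_{b}$-Fourier coefficient is $\psi_{b}(x)\sum_{u\in \bb{F}_{q}}\psi_{b}(su)$, and the inner sum vanishes unless $\mathrm{Tr}(bs\,\bb{F}_{q})=0$, i.e. unless $bs\in \bb{F}_{q}^{\perp}$ in the trace form. Collecting this over all canonical cliques shows that the span $\mathcal{I}$ of canonical characteristic vectors is precisely the sum of the trivial eigenspace together with the characters $\psi_{b}$ indexed by those $b$ for which $bs\in\ker(\mathrm{Tr}|_{\bb{F}_{q}})$ for some $s\in S$; the key point is that this index set coincides exactly with the least-eigenvalue support computed in the first step. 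In other words, $\mathcal{I}=\langle \psi_{0}\rangle \oplus (\text{least-eigenspace})$, which is the analogue here of the ideal $\mf{I}_{G}(\Omega)$ of Lemma~\ref{caniddes}.

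The third step is the clique--coclique / ratio-bound rigidity, playing the role of Lemma~\ref{wrb}. Since the graph is strongly regular, a maximum clique meets the Delsarte--Hoffman clique bound $1-k/\tau$, where $k$ is the valency and $\tau$ the least eigenvalue; equality in that bound forces the centred characteristic vector $\ch_{C}-\frac{|C|}{q^{2}}\ch_{X}$ of any maximum clique $C$ to lie in the $\tau$-eigenspace. Adding back the all-ones vector $\ch_{X}$ (which is the trivial eigenvector) shows $\ch_{C}\in \langle \psi_{0}\rangle\oplus(\text{least-eigenspace})=\mathcal{I}$. Combining this with the identification from the second step yields that $\ch_{C}$ is a $\bb{C}$-linear combination of the canonical-clique vectors, which is the assertion of Theorem~\ref{EKRP}.

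I expect the main obstacle to be the clean matching in the second step: verifying that the $b$ indexing the least-eigenvalue characters are \emph{exactly} those $b$ with $bs\in\ker(\mathrm{Tr}|_{\bb{F}_q})$ for some $s\in S$, rather than merely a subset. This requires a careful count of cosets of $\bb{F}_{q}^{\times}$ versus the dimension of the least-eigenspace, using that $S$ contains $\bb{F}_{q}^{\times}$ and is a union of $m$ cosets; the relevant cancellation is a cyclotomic computation of the Gauss-type sums $\sum_{u\in\bb{F}_q}\psi_b(su)$ over the fixed field. A secondary subtlety is that the conclusion is over $\bb{C}$, so I only need equality of spans (dimensions matching) rather than an explicit integral combination, which simplifies the bookkeeping considerably.
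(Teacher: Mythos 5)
Your plan follows the same route as the paper: compute the spectrum of the Cayley graph via additive characters, use rigidity in the Delsarte--Hoffman clique bound to place $\ch_{C}$ in a "trivial plus restricted eigenspace" subspace, and identify that subspace with the span of the canonical clique vectors. However, two points need attention before this is a proof.

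First, you have the two restricted eigenspaces swapped throughout. The least eigenvalue is $\tau=-m$, attained by the characters $\psi_{b}$ that are nontrivial on \emph{every} coset $c_{i}\bb{F}_{q}$; the characters $\psi_{b}$ that are trivial on some $c_{i}\bb{F}_{q}$ (your condition $bc_{i}\in\bb{F}_{q}^{\perp}$) give the \emph{second} eigenvalue $r=q-m$. So the index set you compute in your second step is the support of the $r$-eigenspace, not of the least eigenspace. Correspondingly, equality in the clique bound $|C|\leq 1-k/\tau$ forces $\ch_{C}$ to be \emph{orthogonal} to the $\tau$-eigenspace, so that $\ch_{C}-\frac{|C|}{q^{2}}\mathbf{1}$ lies in the $r$-eigenspace; the rigidity statement you quote (centred vector lies in the $\tau$-eigenspace) is the one for cocliques, not cliques. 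Because you make the same swap in both steps the architecture survives relabelling, but as written both assertions are false.

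Second, and more seriously, the word "precisely" in your second step carries the entire burden of the proof and is not justified. The Fourier computation you describe only yields the containment $\mathcal{I}\subseteq\langle\mathbf{1}\rangle\oplus V_{q-m}$, and combined with the clique-bound rigidity this gives $\ch_{C}\in\langle\mathbf{1}\rangle\oplus V_{q-m}$ --- which says nothing about $\ch_{C}\in\mathcal{I}$ unless the reverse containment holds. The obstacle you single out (that the eigenvalue support coincides with the Fourier support) is immediate from the spectrum computation and is not where the difficulty lies; what must be proved is that the $mq$ canonical clique vectors span the full space of dimension $1+m(q-1)$. The paper does this by decomposing $V_{q-m}=\bigoplus_{i}E_{i}$ orthogonally, where $E_{i}$ is spanned by the nontrivial characters vanishing on $c_{i}\bb{F}_{q}$, checking that $\ch_{c_{i}\bb{F}_{q}+x}-\frac{1}{q}\mathbf{1}\in E_{i}$, and then using that the $q$ translates of $c_{i}\bb{F}_{q}$ have pairwise orthogonal characteristic vectors to show these centred vectors span all of the $(q-1)$-dimensional space $E_{i}$. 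Your proposal gestures at "dimensions matching" but supplies no such argument, so this step is a genuine gap.
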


We now collect some results about Peisert-type graphs and some general results about strongly regular graphs.
The main result of \cite{brouwer1999cyclotomy} shows that Peisert-type graphs are strongly regular. A different proof of the same is given in \cite{EKRpaley}. We will give a proof using a standard technique of finding eigenvalues of an abelian Cayley graph.

\begin{lemma}\label{par}
Peisert-type graph of type $(m,q)$ is strongly regular with eigenvalues $k:=m(q-1)$ with multiplicity 1, $q-m$ with multiplicity $m(q-1)$, and $-m$ with multiplicity $q^{2}-1-m(q-1)$.
\end{lemma}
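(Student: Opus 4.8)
The plan is to compute the eigenvalues of the Cayley graph $\mathrm{Cay}(\bb{F}_{q^2}, S)$ directly using the character theory of the abelian group $(\bb{F}_{q^2}, +)$, and then read off the strongly regular parameters from the eigenvalue structure. For an abelian Cayley graph on a group $A$ with connection set $S$, the eigenvectors are precisely the additive characters $\psi$ of $A$, and the corresponding eigenvalue is $\lambda_\psi = \sum_{s \in S} \psi(s)$. Here the additive characters of $\bb{F}_{q^2}$ are $\psi_a(x) = \zeta^{\mathrm{Tr}(ax)}$ for $a \in \bb{F}_{q^2}$, where $\mathrm{Tr}: \bb{F}_{q^2} \to \bb{F}_p$ is the field trace (composed suitably into a complex root of unity $\zeta$). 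The trivial character $a=0$ gives the eigenvalue $|S| = m(q-1) = k$ with multiplicity $1$, accounting for the valency.

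First I would exploit the coset structure of $S$. Since $S = \bigcup_{i=1}^m s_i \bb{F}_q^\times$ is a union of $m$ cosets of the multiplicative subgroup $\bb{F}_q^\times$, each character sum splits as $\lambda_{\psi_a} = \sum_{i=1}^m \sum_{t \in \bb{F}_q^\times} \psi_a(s_i t)$. The key computation is the inner sum $\sum_{t \in \bb{F}_q^\times} \psi_a(s_i t) = \sum_{t \in \bb{F}_q^\times} \zeta^{\mathrm{Tr}(a s_i t)}$. Here the crucial observation is that as $t$ ranges over $\bb{F}_q^\times \subseteq \bb{F}_{q^2}$, the element $b := a s_i$ is fixed, and one needs to evaluate $\sum_{t \in \bb{F}_q} \zeta^{\mathrm{Tr}(bt)} - 1$ (adding and subtracting the $t=0$ term). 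The point is that $\mathrm{Tr}_{\bb{F}_{q^2}/\bb{F}_p}(bt)$ restricted to $t \in \bb{F}_q$ factors through the relative trace $\mathrm{Tr}_{\bb{F}_{q^2}/\bb{F}_q}(b) \in \bb{F}_q$, so the sum over $t \in \bb{F}_q$ is $q$ if $\mathrm{Tr}_{\bb{F}_{q^2}/\bb{F}_q}(b) = 0$ and $0$ otherwise. Hence each inner coset sum equals $q-1$ when $\mathrm{Tr}_{\bb{F}_{q^2}/\bb{F}_q}(as_i)=0$, and equals $-1$ otherwise.

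The main obstacle, and the combinatorial heart of the argument, will be counting for each nonzero $a$ exactly how many of the $m$ cosets $s_i \bb{F}_q^\times$ satisfy the ``trace-zero'' condition $\mathrm{Tr}_{\bb{F}_{q^2}/\bb{F}_q}(as_i)=0$. Writing $j(a)$ for this count, one gets $\lambda_{\psi_a} = j(a)(q-1) + (m - j(a))(-1) = j(a)\,q - m$. For this to yield only the two claimed nontrivial eigenvalues $q - m$ and $-m$, I must show that $j(a) \in \{0, 1\}$ for every nonzero $a$, i.e. that no nonzero $a$ has two distinct cosets landing in the relative-trace-zero hyperplane simultaneously. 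This should follow from the defining structure of $S$ as a union of distinct $\bb{F}_q^\times$-cosets together with the fact that the relative-trace-zero set is itself a single $\bb{F}_q^\times$-coset (union $\{0\}$): the condition $\mathrm{Tr}_{\bb{F}_{q^2}/\bb{F}_q}(as_i)=0$ pins $as_i$ to one coset, and since the $s_i$ lie in distinct cosets, at most one index $i$ can satisfy it for a given $a$. I would then finish by a counting argument: the eigenvalue $q-m$ occurs exactly when $j(a)=1$, and the number of such $a$ can be tallied (each of the $m$ cosets contributes a fixed number of qualifying $a$'s), giving multiplicity $m(q-1)$; the remaining nonzero $a$ give eigenvalue $-m$ with multiplicity $q^2 - 1 - m(q-1)$. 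Finally, since the Cayley graph is connected, regular, and has exactly three distinct eigenvalues, it is strongly regular, completing the proof.
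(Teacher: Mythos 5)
Your proposal is correct and follows essentially the same route as the paper: both diagonalize the adjacency matrix via the additive characters of $\bb{F}_{q^2}$, evaluate the character sum coset by coset, and observe that a nontrivial character can "kill" (i.e., contain in its kernel, equivalently satisfy the relative-trace-zero condition for) at most one of the $m$ distinct $\bb{F}_q^\times$-cosets, which forces exactly the two nontrivial eigenvalues $q-m$ and $-m$ with the stated multiplicities. Your reformulation of the kernel condition through the relative trace $\mathrm{Tr}_{\bb{F}_{q^2}/\bb{F}_q}$ is just a more explicit rendering of the paper's observation that $\mathrm{Ker}(\chi)$ can contain at most one $1$-dimensional $\bb{F}_q$-subspace.
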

\begin{proof}
Let $X$ a Peisert-type graph of type $(m,q)$ whose connection set is $S=\bigcup\limits_{i=0}^{m-1} c_{i}\bb{F}^{\times}_{q}$ (with $c_{0}=1$).
Let $A$ be the adjacency matrix of $X$. Considering an additive character of $\chi$ of $\bb{F}_{q}$ as a column vector, we see that $A\chi=\chi(\ch_{S})\chi$, where $\chi(\ch_{S})=\sum\limits_{s \in S}\chi(s)$.

If $\chi$ is not the trivial character, $Ker(\chi) \neq \bb{F}_{q^{2}}$, and so at most one of $\{c_{i}\bb{F}_{q}\ :\ 0\leq i \leq m-1\}$ can be a subgroup of $Ker(\chi)$. Thus if $c_{i}\bb{F}_{q} \subset Ker(\chi)$ for some $i$, then the restriction $\chi|_{c_{j}\bb{F}_{q}}$ of $\chi$ onto the subgroup $c_{j}\bb{F}_{q}$, is a non-trivial character whenever $j \neq i$. Otherwise, $Ker(\chi)$ will have two $1$-dimensional subspaces of $\bb{F}_{q^{2}}$ and thus must be equal to $\bb{F}_{q^2}$.

Assume that $\chi $ is a non-trivial character with $c_{i}\bb{F}_{q} \subseteq Ker(\chi)$. As the sum of values of a non-trivial character are zero, in this case, we have
\begin{align*}
\chi(\ch_{S}) &= \sum\limits_{x \in c_{i}\bb{F}^{\times}_{q}} \chi(x) + \sum\limits_{j \neq i} \sum\limits_{x \in c_{j}\bb{F}^{\times}_{q}} \chi(x) \\
&= q-1 -(m-1)=q-m.
\end{align*}
The set on non-trivial characters $\chi$ with $c_{i}\bb{F}_{q} \subset Ker(\chi)$ is in one-one correspondence with the non-trivial characters of $\bb{F}_{q^{2}}/c_{i}\bb{F}_{q}$. Thus there are atleast $m(q-1)$ characters $\chi$ such that $A\chi=(q-m)\chi$. As distinct characters are orthogonal the dimension of the $(q-m)$-eigenspace of $A$ is atleast $m(q-1)$.

Similarly, if $\chi$ is a non-trivial character with $c_{i}\bb{F}_{q} \not\subseteq Ker(\chi)$ for all $0\leq i \leq m-1$, we have $\chi(\ch_{S})=-m$. Thus there are atleast $q^{2}-1-m(q-1)$ characters $\chi$ such that $A\chi=(-m)\chi$. As distinct characters are orthogonal the dimension of the $(-m)$-eigenspace of $A$ is atleast $q^{2}-1-m(q-1)$.

If $\chi$ is the trivial character $\chi_{0}$, we have $A\chi_{0}=|S|\chi_{0}$. With this, we have found all the eigenvalues of $A$ and their corresponding eigenspaces. As $A$ has exactly three distinct eigenvalues, it is a strongly regular graph.
\end{proof}

Let $X$ be a strongly regular graph with parameters $(v,\ k,\ \lambda,\ \mu)$, which is a $k$-regular graph on $v$ vertices such that 
\begin{enumerate}[(i)]
\item  any two adjacent vertices have exactly $\lambda$ common neighbours, and 
\item any two non-adjacent vertices have exactly $\mu$ common neighbours. 
\end{enumerate}
We further assume that $X$ is primitive, that is, both $X$ and its complement are connected. It is well known (\cite[Lemma 10.2.1]{GR2001}) that the adjacency matrix $A$ of $X$ has exactly three distinct eigenvalue. As $X$ is $k$-regular and connected, $k$ is an eigenvalue of $A$ with multiplicity $1$. Let $r,s$ with $r>s$ be the other eigenvalues.

Our proof uses some results on graphs in association schemes. For a quick introduction to the preliminaries on graphs in association schemes, we refer the reader to Chapter 3 of \cite{GMbook}. 
We first recall the following well-known result linking strongly regular graphs with association schemes.
By $J$ and $I$, we denote the all-one matrix and the identity matrix respectively.

\begin{lemma}(\cite[Lemma 5.1.1]{GMbook}) Let $X$ be a graph with $A$ as its adjacency matrix. Then $X$ is strongly regular if and only if $\mathcal{A}_{X}:=\{I,\ A,\ \ov{A}:=J-I-A\}$ is an association scheme.
\end{lemma}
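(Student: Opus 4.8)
The statement to prove is the classical characterization: a graph $X$ with adjacency matrix $A$ is strongly regular if and only if $\mathcal{A}_{X}=\{I,\ A,\ \ov{A}\}$ is an association scheme. The plan is to verify the defining axioms of a (symmetric, $2$-class) association scheme directly from the combinatorial conditions defining strong regularity, and conversely to extract the strongly regular parameters from the association scheme structure constants.

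First I would recall what must be checked for $\mathcal{A}_{X}=\{A_0,A_1,A_2\}$ with $A_0=I$, $A_1=A$, $A_2=\ov{A}=J-I-A$ to form an association scheme: the matrices are symmetric $01$-matrices, they sum to $J$, they are pairwise ``disjoint'' in support, $A_0=I$, and crucially the products $A_iA_j$ lie in the linear span of $\{A_0,A_1,A_2\}$. The symmetry, the identity $A_0+A_1+A_2=J$, and the disjointness of supports are immediate from the definition $\ov{A}=J-I-A$ together with $A$ being the adjacency matrix of a simple graph. So the heart of the forward direction is to show the product closure. For this I would compute $A^2$ and observe that for a strongly regular graph with parameters $(v,k,\lambda,\mu)$ the $(u,w)$ entry of $A^2$ counts common neighbours of $u$ and $w$: it equals $k$ on the diagonal, $\lambda$ when $u\sim w$, and $\mu$ when $u\neq w$ and $u\not\sim w$. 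This gives the matrix identity
\begin{equation*}
A^2 = kI + \lambda A + \mu\,\ov{A},
\end{equation*}
which already expresses $A_1A_1$ in the span. The remaining products ($A_1A_2$, $A_2A_2$, and all products involving $A_0=I$) are then handled mechanically: products with $I$ are trivial, and $A_1A_2=A(J-I-A)=kJ-A-A^2$ together with $J=A_0+A_1+A_2$ reduces to a combination of the three matrices, and similarly for $A_2^2$. Hence the span is closed under multiplication, and $\mathcal{A}_{X}$ is an association scheme.

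For the converse, I would assume $\mathcal{A}_{X}=\{I,A,\ov{A}\}$ is an association scheme and run the computation backwards. Being an association scheme, $A^2$ lies in $\mathrm{span}\{I,A,\ov{A}\}$, so there exist scalars $a_0,a_1,a_2$ with $A^2=a_0 I + a_1 A + a_2 \ov{A}$. Reading off entries as above forces $A$ to be regular of some valency $k=a_0$ (from the diagonal, since the diagonal of $A^2$ is constant, i.e. every vertex has the same degree), $\lambda=a_1$ common neighbours for adjacent pairs, and $\mu=a_2$ common neighbours for non-adjacent pairs; regularity itself also follows from closure of the product $AJ$ under the scheme, since $J$ is a sum of the scheme matrices. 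Thus $X$ satisfies the defining conditions of a strongly regular graph with parameters $(v,k,\lambda,\mu)$.

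I expect the only genuine content to be the identity $A^2=kI+\lambda A+\mu\ov{A}$ and its entrywise interpretation; everything else is bookkeeping. The main obstacle, such as it is, is bookkeeping care rather than any deep difficulty: one must be attentive that strong regularity as stated imposes no constant-degree hypothesis beyond $k$-regularity, so in the converse the constancy of the diagonal of $A^2$ (a consequence of the scheme axioms) is what delivers regularity, and one should confirm that the three scheme matrices are genuinely linearly independent so that the scalars $k,\lambda,\mu$ are well defined. Since this lemma is quoted from \cite[Lemma 5.1.1]{GMbook} and used only as a known tool, I would in practice simply cite it, but the above is the direct verification one would give.
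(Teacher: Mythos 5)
Your verification is correct, and it is the standard one: the identity $A^2=kI+\lambda A+\mu\ov{A}$ does all the work in the forward direction, and reading the structure constants of $A^2$ off the entries (using that $I$, $A$, $\ov{A}$ have disjoint supports, so the diagonal of $A^2$ being constant forces regularity) gives the converse. The paper itself offers no proof of this lemma; it is quoted verbatim from \cite[Lemma 5.1.1]{GMbook} and used as a black box, so there is nothing to compare against beyond noting that your direct check is exactly what the cited source does. The only point worth flagging is the usual degenerate-case convention: for $\{I,A,\ov{A}\}$ to consist of three nonzero $01$-matrices one needs $X$ to be neither complete nor empty, which matches the standard exclusion of those graphs from the definition of strong regularity (and is consistent with the paper's later restriction to primitive strongly regular graphs).
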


By $\bb{C}[\mathcal{A}_{X}]$, we denote the linear span of matrices in $\mathcal{A}_{X}$. This is referred to as the Bose-Mesner algebra. By a well-known result (\cite[Theorem 3.4.4]{GMbook}), the projections onto eigenspaces of $A$ is an orthogonal basis of idempotents of $\bb{C}[\mathcal{A}_{X}]$. The matrix $J$ is the projection onto the $k$-eigenspace. We denote $E_{r}$ and $E_{s}$ to be the projections onto the $r$-eigenspace and the $s$-eigenspace respectively. We have $A=kJ+rE_{r}+sE_{s}$, $I=\frac{J}{n}+E_{r}+E_{s}$, and so $\{\frac{J}{n},\ E_{r},\ E_{s}\}$ is an orthogonal basis of idempotents of $\bb{C}[\mathcal{A}_{X}]$. We now mention a bound by Delsarte (see equation $(3.25)$ of \cite{delsarte1973algebraic}) on cliques in strongly regular graphs. We state the formulation of this result as given in \cite[Corollary 3.7.2]{GMbook}.

\begin{lemma}\label{godsilub}
Let $X$ be $k$-regular strongly regular graph with $s$ as the least eigenvalue of its adjacency matrix. If $C$ is a clique in $X$, then $|C| \leqslant 1-\frac{k}{s}$.

Moreover, if $C$ is a clique that meets the bound with equality, then the characteristic vector $\ch_{C}$ is orthogonal to the $s$-eigenspace.
\end{lemma}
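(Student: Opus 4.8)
The plan is to work entirely inside the Bose--Mesner algebra $\bb{C}[\mathcal{A}_X]$, using the three orthogonal idempotents $E_0=\frac1v J$, $E_r$, $E_s$ that project onto the $k$-, $r$- and $s$-eigenspaces of $A$ (here $v$ denotes the number of vertices), together with the fact that each $E_i$ is \emph{positive semidefinite}. Writing $x:=\ch_C$ and $c:=|C|$, the clique hypothesis records three scalars at once: $x^{\top}x=c$, $x^{\top}Ax=c(c-1)$ (every off-diagonal entry of $A$ indexed by $C$ equals $1$, every diagonal entry is $0$), and $x^{\top}E_0x=\frac1v x^{\top}Jx=\frac{c^{2}}{v}$.

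First I would isolate the $s$-eigenspace contribution. From $I=E_0+E_r+E_s$ and $A=kE_0+rE_r+sE_s$ one gets $A-rI=(k-r)E_0+(s-r)E_s$, whence
$$x^{\top}(A-rI)x=(k-r)\frac{c^{2}}{v}+(s-r)\,x^{\top}E_sx .$$
The left-hand side equals $c(c-1)-rc$, so this is a single identity in the nonnegative quantity $q:=x^{\top}E_sx\ge 0$. Since $s-r<0$, dropping the term $(s-r)q\le 0$ yields $c(c-1)-rc\le (k-r)\frac{c^{2}}{v}$, i.e. $c-1-r\le \frac{(k-r)c}{v}$ after dividing by $c>0$.

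Next I would clear $v$ using the standard strongly regular identity $v=\frac{(k-r)(k-s)}{k+rs}$ (equivalently $k+rs=\mu>0$), valid for primitive $X$; see \cite{GR2001}. Substituting and cancelling the strictly positive factors $k-r$, $1+r$ and $k+rs$ collapses the inequality to $-cs\le k-s$, and dividing by $-s>0$ gives exactly $c\le 1-\frac{k}{s}$. The one step to carry out with care is the bookkeeping of signs ($s<0<r<k$ and $\mu>0$ for primitive $X$) so that each cancellation preserves the inequality direction; this is the only real obstacle, and it is routine once the parameter relations are in hand.

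Finally, for the equality clause I would note that the \emph{only} inequality used above is $(s-r)q\le 0$ together with $q\ge 0$; every subsequent manipulation is an equivalence (multiplication and division by strictly positive quantities). Hence $c=1-k/s$ forces $q=x^{\top}E_sx=0$. Because $E_s$ is a symmetric idempotent, $q=\|E_sx\|^{2}$, so $E_sx=0$, that is, $\ch_C\in\ker E_s=\mathrm{range}(E_s)^{\perp}$. Since $\mathrm{range}(E_s)$ is precisely the $s$-eigenspace, this shows $\ch_C$ is orthogonal to the $s$-eigenspace, as claimed. (Alternatively, the bound and this equality statement follow by viewing $C$ as a coclique in the complement $\overline{X}$, whose least eigenvalue is $-1-r$, and applying the ratio bound of Lemma~\ref{wrb}; the equality vector $\ch_C-\frac{c}{v}\mathbf{1}$ is then a $(-1-r)$-eigenvector of $\overline{A}=J-I-A$, hence an $r$-eigenvector of $A$, again giving orthogonality to the $s$-eigenspace.)
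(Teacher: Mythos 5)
Your proof is correct, but it is worth noting that the paper does not prove this lemma at all: it is quoted from Delsarte (equation (3.25) of \cite{delsarte1973algebraic}) in the formulation of \cite[Corollary 3.7.2]{GMbook}, so you have supplied an argument where the paper only cites. Your main route is a clean quadratic-form computation in the Bose--Mesner algebra: with $x=\ch_{C}$, $c=|C|$, the identity $x^{\top}(A-rI)x=(k-r)\frac{c^{2}}{v}+(s-r)\,x^{\top}E_{s}x$ together with $x^{\top}Ax=c(c-1)$ and $x^{\top}E_{s}x=\|E_{s}x\|^{2}\geq 0$ does all the work, and the equality clause falls out for free since the single inequality used is $(s-r)\|E_{s}x\|^{2}\leq 0$. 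I checked the algebra: after substituting $v=\frac{(k-r)(k-s)}{k+rs}$, the factor $k-r$ cancels, one multiplies by $k-s>0$, factors out $1+r>0$, and divides by $-s>0$ to reach $c\leq 1-\frac{k}{s}$; so your list of cleared factors is slightly off ($k+rs=\mu$ enters only through the identity for $v$, while the positive quantities actually cleared are $k-s$, $1+r$ and $-s$), but this is a bookkeeping slip in the sketch, not a gap, and you flagged it as the point requiring care. Do note that your argument genuinely uses primitivity ($\mu=k+rs>0$), which the statement omits but the paper assumes in the surrounding discussion. Your parenthetical alternative --- treating $C$ as a coclique in $\overline{X}$, whose least eigenvalue is $-1-r$, and applying the ratio bound of Lemma~\ref{wrb} --- is also correct (the same parameter identity $v\mu=(k-r)(k-s)$ shows $\frac{v(1+r)}{v-k+r}=1-\frac{k}{s}$), and it is arguably the more natural route within this paper, since Lemma~\ref{wrb} is exactly the Delsarte--Hoffman bound the authors set up earlier; it also directly yields that $\ch_{C}-\frac{c}{v}\mathbf{1}$ is an $r$-eigenvector, which is the form the paper exploits in Lemma~\ref{evec}.
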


Given a subset $B$ of the vertex set of $X$, by $\ch_{B}$, we denote the characteristic vector of $B$, and by $\mathbf{1}$, the all-one vector. Consider the $\bb{C}$-linear span $V_{max}$ of characteristic vectors of maximum cliques in $X$. By the above lemma, we have $|C| \leq 1-\frac{k}{s}$, for any clique $C$. Assume that there is a clique $C$ of size $1-\frac{k}{s}$, then by the above Lemma, $V_{max}$ is orthogonal to the $s$-eigenspace. We will now show that $V_{max}$ is in the image of $\frac{J}{n}+E_{r}$.

\begin{lemma}\label{evec}
Let $X$ be $k$-regular strongly regular graph on $n$ vertices with $\{k,\ r,\ s\}$ with $r>s$ as set of distinct eigenvalues of its adjacency matrix. If $X$ has a clique of size $1-\frac{k}{s}$, then $\ch_{C}-\frac{|C|}{n}\mathbf{1}$ is an $r$-eigenvector.
\end{lemma}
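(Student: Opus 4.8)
The plan is to exploit the spectral projection idempotents $\frac{J}{n}, E_r, E_s$ recorded just before the statement, together with the orthogonality conclusion of Lemma~\ref{godsilub}. Recall that $I = \frac{J}{n} + E_r + E_s$, where $\frac{J}{n}$, $E_r$, $E_s$ are the orthogonal projections onto the $k$-, $r$-, and $s$-eigenspaces of $A$ respectively. Writing $v := \ch_C - \frac{|C|}{n}\mathbf{1}$, my strategy is to show that $\frac{J}{n}v = 0$ and $E_s v = 0$; feeding this into the identity above forces $v = E_r v$, that is, $v$ lies in the $r$-eigenspace, which is exactly what is claimed.

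First I would record two elementary identities. Since $J\ch_C = (\mathbf{1}^{\top} \ch_C)\mathbf{1} = |C|\mathbf{1}$ and $J\mathbf{1} = n\mathbf{1}$, we get $\frac{J}{n}\ch_C = \frac{|C|}{n}\mathbf{1}$ and $\frac{J}{n}\mathbf{1} = \mathbf{1}$. Hence
\[ \frac{J}{n}v = \frac{J}{n}\ch_C - \frac{|C|}{n}\,\frac{J}{n}\mathbf{1} = \frac{|C|}{n}\mathbf{1} - \frac{|C|}{n}\mathbf{1} = 0, \]
so $v$ is orthogonal to the $k$-eigenspace spanned by $\mathbf{1}$.

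The one substantive input is the orthogonality half of Lemma~\ref{godsilub}: because $C$ is a clique meeting the Delsarte bound $|C| = 1 - \frac{k}{s}$, the vector $\ch_C$ is orthogonal to the $s$-eigenspace, i.e.\ $E_s \ch_C = 0$. Since $\mathbf{1}$ lies in the $k$-eigenspace we also have $E_s\mathbf{1} = 0$, and therefore $E_s v = E_s\ch_C - \frac{|C|}{n}E_s\mathbf{1} = 0$.

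Finally, applying $I = \frac{J}{n} + E_r + E_s$ to $v$ gives
\[ v = \frac{J}{n}v + E_r v + E_s v = E_r v, \]
so $v$ lies in the image of $E_r$, namely the $r$-eigenspace, whence $Av = rv$. (The vector is nonzero, since $\ch_C$ is a $0$--$1$ vector with $0 < |C| < n$ and hence not a scalar multiple of $\mathbf{1}$.) I do not expect any genuine obstacle here: once Lemma~\ref{godsilub} supplies $E_s\ch_C = 0$, everything else is routine bookkeeping with the three mutually orthogonal idempotents.
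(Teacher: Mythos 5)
Your proposal is correct and follows essentially the same route as the paper: both use $E_s\ch_C=0$ from Lemma~\ref{godsilub}, the orthogonality of $\ch_C-\frac{|C|}{n}\mathbf{1}$ to the $k$-eigenspace, and the decomposition into the three orthogonal idempotents to conclude the vector lies in the $r$-eigenspace. Your write-up merely makes the bookkeeping with $\frac{J}{n}$, $E_r$, $E_s$ more explicit.
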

\begin{proof}
From Lemma~\ref{godsilub}, we have $E_{s}\ch_{S}=0$. As $\mathbf{1}$ is a $k$-eigenvector, it is also orthogonal to the $s$-eigenspace. Since $J \ch_{C} = |C| \mathbf{1}$, the vector $\ch_{C}-\frac{|C|}{n}\mathbf{1}$ is orthogonal to both the $k$-eigenspace and the $s$-eigenspace, and so must lie in the $r$-eigenspace.
\end{proof}

We are now ready to prove Theorem~\ref{EKRP}.
\vskip0.1in

{\bf Proof of Theorem~\ref{EKRP}:}\ 
Let $X$ be a Peisert-type graph of type $(m,q)$ whose connection set is $S=\bigcup\limits_{i=0}^{m-1} c_{i}\bb{F}^{\times}_{q}$ (with $c_{0}=1$). 
By Lemmas~\ref{par}, \ref{godsilub} and \ref{evec}, we obtain the next result.
\begin{lemma}
If $C$ is a maximum clique in $X$, then $|C|=q$ and $\ch_{C}-\frac{1}{q}\mathbf{1}$ is a $(q-m)$-eigenvector.
\end{lemma}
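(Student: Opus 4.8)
The plan is to combine the three cited lemmas in sequence. First I would invoke Lemma~\ref{par} to read off the eigenvalues of the Peisert-type graph $X$ of type $(m,q)$: the least eigenvalue is $s=-m$ and the degree is $k=m(q-1)$. The other eigenvalue is $r=q-m$. These are exactly the data needed to feed the Delsarte clique bound.

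Next I would apply Lemma~\ref{godsilub}, the Delsarte bound, to get $|C|\leqslant 1-\frac{k}{s}$ for any clique $C$. Substituting $k=m(q-1)$ and $s=-m$ gives
\[
1-\frac{k}{s}=1-\frac{m(q-1)}{-m}=1+(q-1)=q.
\]
So every clique has size at most $q$. To see that the bound is actually attained, I would point to the canonical clique $\bb{F}_{q}=1\cdot\bb{F}_{q}\subseteq\bb{F}_{q^{2}}$ (recalling $c_{0}=1$ and $\bb{F}^{\times}_{q}\subset S$), which is a clique of size exactly $q$. Hence the maximum clique size is $q$, establishing the first assertion $|C|=q$.

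Finally, since a maximum clique $C$ meets the Delsarte bound with equality, Lemma~\ref{godsilub} tells us that $\ch_{C}$ is orthogonal to the $s$-eigenspace. With the bound attained, I would then apply Lemma~\ref{evec} directly: its hypothesis (existence of a clique of size $1-\frac{k}{s}$) is now verified, and its conclusion says precisely that $\ch_{C}-\frac{|C|}{n}\mathbf{1}$ lies in the $r$-eigenspace. Here $|C|=q$ and $r=q-m$, so $\ch_{C}-\frac{1}{q}\mathbf{1}$ is a $(q-m)$-eigenvector, which is the second assertion.

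I do not expect any serious obstacle here, since this lemma is essentially a bookkeeping combination of the three preceding results: the only genuine computation is the arithmetic $1-\frac{m(q-1)}{-m}=q$, and the only point requiring a moment's care is confirming that the bound is attained (so that the equality clauses of Lemmas~\ref{godsilub} and~\ref{evec} apply), which follows from the canonical clique $\bb{F}_{q}$. The substantive content of the argument lives in Lemma~\ref{par} and the general strongly-regular-graph lemmas already proved; this statement merely specializes them to the Peisert-type parameters.
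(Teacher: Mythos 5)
Your proposal is correct and follows exactly the route the paper takes: the paper derives this lemma as an immediate consequence of Lemmas~\ref{par}, \ref{godsilub} and \ref{evec}, and you have simply written out the arithmetic $1-\frac{m(q-1)}{-m}=q$ and the attainment of the bound by the canonical clique $\bb{F}_{q}$ that the paper leaves implicit. Nothing is missing.
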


Thus, given $x \in \bb{F}_{q^{2}}$ and $0\leq i \leqslant m-1$, the canonical clique $c_{i}\bb{F}_{q}+x$ is a maximum clique and $\ch_{c_{i},x}:=\ch_{c_{i}\bb{F}_{q}+x}-\frac{1}{q}\mathbf{1}$ is a $(q-m)$-eigenevector. By Lemma~\ref{par}, the dimension of the $(q-m$)-eigenspace is $m(q-1)$. Using the above Lemma, we can now deduce that Theorem~\ref{EKRP} follows from showing that $\{\ch_{c_{i},x}\ : \ x \in \bb{F}_{q^{2}}\ \&\ 0\leqslant i \leqslant m-1\}$ spans an $m(q-1)$ dimensional vector space.

Given an additive character $\chi$ of $\bb{F}_{q^{2}}$, we set $\ch_{\chi}:=\sum\limits_{z\in \bb{F}_{q^{2}}}\chi(z)z$. 
In the proof of Lemma~\ref{par}, we see that the $(q-m)$-eigenspace $V_{q-m}$, is spanned by 
$$\{\ch_{\chi}: \ \text{ $\chi$ is non-trivial and $c_{i}\bb{F}_{q} \subseteq \ Ker(\chi)$ for some $0\leqslant i \leqslant m-1$}\}.$$
Let $E_{i}= \left\langle\{v_{\chi}\ :\ \bb{F}_{q^{2}}\supsetneq Ker(\chi) \supset c_{i}\bb{F}_{q}\} \right\rangle$. 
Then we have
$$V_{q-m}=\bigoplus E_{i}.$$

By $\left[\ ,\  \right]$, we denote the natural orthogonal form on $\bb{C}[\bb{F}_{q^{2}}]$. 
With respect to this form, we have $E^{\perp}_{i}= \bigoplus\limits_{j\neq i}E_{j}$. 
If $\chi$ is a non-trivial character, then 
$$\left[ \ch_{\chi}, \ch_{c_{i},x} \right] = \chi(x)\sum\limits_{z\in c_{i}\bb{F}_{q}}\chi(z).$$
Therefore, $\ch_{c_{i},x} \in E_{i}$ for all $x\in \bb{F}_{q^{2}}$. 
We will now show that vectors of the form $\ch_{c_{i},x}$ span $E_{i}$. 
Considering $x\in \bb{F}^{\times}_{q^{2}}$ and $f_{i,x}:= \ch_{c_{i},\ 0}- \ch_{c_{i},\ x}= \ch_{c_{i}\bb{F}_{q}}-\ch_{c_{i}\bb{F}_{q}+x}$. 
As the set $\{\ch_{c_{i}F_{q}+x}\ : \ x\in \bb{F}_{q^{2}}\}$ is a set of $q$ orthogonal vectors, the set $\{f_{i,x}: \ x\neq 0\}$ spans a $q-1$ dimensional vector space. 
Therefore $\{\ch_{c_{i},x}: \ x \in \bb{F}_{q^{2}}\}$ spans $E_{i}$. 
Thus $\{\ch_{c_{i},x}: \ x \in \bb{F}_{q^{2}}\ \&\ 0\leqslant i \leqslant m-1\}$ spans $V_{q-m}=\bigoplus E_{i}$. 
This concludes the proof of Theorem~\ref{EKRP}.
\qed

\bibliographystyle{plain}
\bibliography{ref}

\begin{thebibliography}{10}

\bibitem{AM2014}
Bahman Ahmadi and Karen Meagher.
\newblock A new proof for the {{Erd\H{o}s}--Ko--Rado} theorem for the
  alternating group.
\newblock {\em Discrete Mathematics}, 324:28--40, 2014.

\bibitem{ahmadi2015erdHos}
Bahman Ahmadi and Karen Meagher.
\newblock The {Erd{\H{o}}s-Ko-Rado} property for some 2-transitive groups.
\newblock {\em Annals of Combinatorics}, 19(4):621--640, 2015.

\bibitem{AM2015}
Bahman Ahmadi and Karen Meagher.
\newblock The {{Erd\H{o}s}-Ko-Rado} property for some permutation groups.
\newblock {\em Australasian Journal of Combinatorics}, 61(1):23--41, 2015.

\bibitem{EKRpaley}
Shamil Asgarli, Sergey Goryainov, Huiqiu Lin, and Chi~Hoi Yip.
\newblock The {EKR}-module property of pseudo-{Paley} graphs of square order.
\newblock {\em The Electronic Journal of Combinatorics}, 2022.

\bibitem{Babai}
L{\'a}szl{\'o} Babai.
\newblock Spectra of {Cayley} graphs.
\newblock {\em Journal of Combinatorial Theory, Series B}, 27(2):180--189,
  1979.

\bibitem{BMMKK2015}
Mohammad Bardestani and Keivan Mallahi-Karai.
\newblock On the {Erd{\H{o}}s--Ko--Rado} property for finite groups.
\newblock {\em Journal of Algebraic Combinatorics}, 42(1):111--128, 2015.

\bibitem{brouwer1999cyclotomy}
Andries~E Brouwer, Richard~M Wilson, and Qing Xiang.
\newblock Cyclotomy and strongly regular graphs.
\newblock {\em Journal of Algebraic Combinatorics}, 10(1):25--28, 1999.

\bibitem{CK2003}
Peter~J Cameron and Cheng~Yeaw Ku.
\newblock Intersecting families of permutations.
\newblock {\em European Journal of Combinatorics}, 24(7):881--890, 2003.

\bibitem{delsarte1973algebraic}
Philippe Delsarte.
\newblock An algebraic approach to the association schemes of coding theory.
\newblock {\em Philips Res. Rep. Suppl.}, 10:vi+--97, 1973.

\bibitem{DS1981}
Persi Diaconis and Mehrdad Shahshahani.
\newblock Generating a random permutation with random transpositions.
\newblock {\em Zeitschrift f{\"u}r Wahrscheinlichkeitstheorie und verwandte
  Gebiete}, 57(2):159--179, 1981.

\bibitem{ellis2012setwise}
David Ellis.
\newblock Setwise intersecting families of permutations.
\newblock {\em Journal of Combinatorial Theory, Series A}, 119(4):825--849,
  2012.

\bibitem{EKR1961}
Psul Erd\H{o}s, Chao Ko, and Richard Rado.
\newblock {Intersection Theorems for systems of finite sets}.
\newblock {\em The Quarterly Journal of Mathematics}, 12(1):313--320, 01 1961.

\bibitem{DF1977}
Peter Frankl and Mikhail Deza.
\newblock On the maximum number of permutations with given maximal or minimal
  distance.
\newblock {\em Journal of Combinatorial Theory, Series A}, 22(3):352--360,
  1977.

\bibitem{FW1986}
P{\'e}ter Frankl and Richard~M Wilson.
\newblock The {Erd{\H{o}}s-Ko-Rado theorem} for vector spaces.
\newblock {\em Journal of Combinatorial Theory, Series A}, 43(2):228--236,
  1986.

\bibitem{GM2009}
Chris Godsil and Karen Meagher.
\newblock A new proof of the {{Erd\H{o}s}--Ko--Rado} theorem for intersecting
  families of permutations.
\newblock {\em European Journal of Combinatorics}, 30(2):404--414, 2009.

\bibitem{GR2001}
Chris Godsil and Gordon~F Royle.
\newblock {\em Algebraic graph theory}, volume 207.
\newblock Springer Science \& Business Media, 2001.

\bibitem{GMbook}
Christopher Godsil and Karen Meagher.
\newblock {\em Erd\H{o}s–Ko–Rado Theorems: Algebraic Approaches}.
\newblock Cambridge Studies in Advanced Mathematics. Cambridge University
  Press, 2015.

\bibitem{hujdurovic2022maximum}
Ademir Hujdurovi{\'c}, Klavdija Kutnar, Dragan Maru{\v{s}}i{\v{c}}, and
  {\v{S}}tefko Miklavi{\v{c}}.
\newblock On maximum intersecting sets in direct and wreath product of groups.
\newblock {\em European Journal of Combinatorics}, 103:103523, 2022.

\bibitem{isaacs1994character}
I~Martin Isaacs.
\newblock {\em Character theory of finite groups}, volume~69.
\newblock Courier Corporation, 1994.

\bibitem{LM2004}
Benoit Larose and Claudia Malvenuto.
\newblock Stable sets of maximal size in{ Kneser-type }graphs.
\newblock {\em European Journal of Combinatorics}, 25(5):657--673, 2004.

\bibitem{LSP}
Cai~Heng Li, Shu~Jiao Song, and Venkata Raghu~Tej Pantangi.
\newblock {Erd\H{o}s-Ko-Rado problems} for permutation groups.
\newblock {\em arXiv preprint arXiv:2006.10339}, 2020.

\bibitem{LPSX2018}
Ling Long, Rafael Plaza, Peter Sin, and Qing Xiang.
\newblock Characterization of intersecting families of maximum size in {PSL(2,
  q)}.
\newblock {\em Journal of Combinatorial Theory, Series A}, 157:461--499, 2018.

\bibitem{PSUEKR}
Karen Meagher.
\newblock An {Erd{\H{o}}s-Ko-Rado} theorem for the group {PSU(3, q)}.
\newblock {\em Designs, Codes and Cryptography}, 87(4):717--744, 2019.

\bibitem{meagher2021triangles}
Karen Meagher, Andriaherimanana~Sarobidy Razafimahatratra, and Pablo Spiga.
\newblock On triangles in derangement graphs.
\newblock {\em Journal of Combinatorial Theory, Series A}, 180:105390, 2021.

\bibitem{MSi2019}
Karen Meagher and Peter Sin.
\newblock All 2-transitive groups have the {EKR}-module property.
\newblock {\em Journal of Combinatorial Theory, Series A}, 177:105322, 2021.

\bibitem{MS2011}
Karen Meagher and Pablo Spiga.
\newblock An {{Erd\H{o}s}--Ko--Rado} theorem for the derangement graph of
  {PGL(2, q)} acting on the projective line.
\newblock {\em Journal of Combinatorial Theory, Series A}, 118(2):532--544,
  2011.

\bibitem{MST2016}
Karen Meagher, Pablo Spiga, and Pham~Huu Tiep.
\newblock An {{Erd\H{o}s}--Ko--Rado} theorem for finite 2-transitive groups.
\newblock {\em European Journal of Combinatorics}, 55:100--118, 2016.

\bibitem{praeger1996finite}
Cheryl~E Praeger.
\newblock {\em Finite quasiprimitive graphs}.
\newblock University of Western Australia. Department of Mathematics, 1996.

\end{thebibliography}
\end{document}